\numberwithin{equation}{section}
\numberwithin{figure}{section}
\theoremstyle{plain}
\newtheorem{thm}{\protect\theoremname}
  \theoremstyle{plain}
  \newtheorem{cor}[thm]{\protect\corollaryname}
  \theoremstyle{remark}
  \newtheorem{rem}[thm]{\protect\remarkname}
  \theoremstyle{plain}
  \newtheorem{lem}[thm]{\protect\lemmaname}
  \providecommand{\corollaryname}{Corollary}
  \providecommand{\lemmaname}{Lemma}
  \providecommand{\remarkname}{Remark}
\providecommand{\theoremname}{Theorem}
\begin{document}

\title[The fundamental Lepage form in variational theory]{The fundamental Lepage form \\
in variational theory for submanifolds}

\author{Zbyn\v{e}k Urban and J\'an Brajer\v{c}\'ik}

\address{Z. Urban, Department of Mathematics and Descriptive Geometry\\
V\v{S}B-Technical University of Ostrava\\
17. listopadu 15, 708 33 Ostrava-Poruba, Czech Republic}

\email{zbynek.urban@vsb.cz; urbanzp@gmail.com}

\address{J. Brajer\v{c}\'ik, Department of Physics, Mathematics and Techniques\\
University of Pre\v{s}ov, 17. novembra 1, 081 16 Pre\v{s}ov, Slovakia}

\email{jan.brajercik@unipo.sk}
\begin{abstract}
A setting for global variational geometry on Grassmann fibrations
is presented. The integral variational functionals for finite dimensional
immersed submanifolds are studied by means of the fundamental Lepage
equivalent of a homogeneous Lagrangian, which can be regarded as a
generalization of the well-known Hilbert form in the classical mechanics.
Prolongations of immersions, diffeomorphisms and vector fields to
the Grassmann fibrations are introduced as geometric tools for the
variations of immersions. The first infinitesimal variation formula
together with its consequences, the Euler\textendash Lagrange equations
for extremal submanifolds and the Noether theorem for invariant variational
functionals are proved. The theory is illustrated on the variational
functional for minimal submanifolds.
\end{abstract}

\keywords{Lagrangian; Euler\textendash Lagrange form; Lepage equivalent; Noether
current; Grassmann fibration; Zermelo conditions; minimal surface
functional}

\subjclass[2000]{58E30; 58A20; 58D19; 53A10}

\thanks{The authors are very grateful to Professors Demeter Krupka and David
J. Saunders for research discussions during the 22nd International
Summer School on Global Analysis and Applications, Krakow, August
2017. ZU also acknowledges the SAIA scholarship programme, and hospitality
of the Department of Physics, Mathematics and Techniques, University
of Pre\v{s}ov.}
\maketitle

\section{Introduction\label{sec:1}}

The subject of this paper is the theory of variational functionals
for \textit{\textcolor{black}{submanifolds}}, where extremals become
submanifolds of Euclidean spaces or general finite dimensional smooth
manifolds. Thus, the variational variables will be rather sets than
mappings between manifolds. This theory requires adequate underlying
geometric structures, the quotient topological spaces the higher-order
\textit{\textcolor{black}{Grassmann fibrations}}\textcolor{black}{.
Following the pioneering work by Dedecker \cite{Dedecker}, the theory
has been studied by different authors (cf. Crampin and Saunders \cite{CramSaun1},
Manno and Vitolo \cite{MannoVitolo}, Grigore \cite{Grigore-Handbook}),
who employed }\textit{\textcolor{black}{different}}\textcolor{black}{{}
geometric structures and variational objects defined on them. The
variational theory for }\textit{\textcolor{black}{one}}\textcolor{black}{-dimensional
immersed submanifolds (}\textit{\textcolor{black}{fibered mechanics}}\textcolor{black}{),
including the Noether-type invariance theory, was also studied by
Urban and Krupka \cite{UK-Acta,UK-IJGMMP,UK-AMAPN} with a direct
use of Grassmann fibrations as the underlying spaces.}

\textcolor{black}{Our main aim is to develop foundations of multiple-integral
variational functionals for submanifolds, defined by differential
forms with specific properties \textendash{}} \textit{\textcolor{black}{Lepage
forms}}. Roughly speaking, Lepage forms represent a far-going generalization
of the well-known Cartan form from the calculus of variations of simple
integral problems and classical mechanics. Replacing the initial Lagrangian
by its Lepage equivalent, one obtains the same variational functional,
but additionaly the geometric and variational properties of the functional
are described by geometric operations acting on the corresponding
Lepage equivalent. For a review of basic properties and results of
the theory of Lepage forms in the calculus of variations, see Krupka,
Krupkov\'a, and Saunders \cite{KKS}.

We introduce the concept of a Lepage form for differential forms on
manifolds of regular velocities (jets of immersions) with the help
of the canonical embedding into fibered velocity spaces. It appears
that these forms can be canonically projected onto the Grassmann fibrations,
hence we obtain crucial objects for studying variational properties
(namely variations, extremals, and invariance properties) of the integral
functionals. To this purpose we utilize the concept of the \textit{\textcolor{black}{fundamental
Lepage equivalent}} of a Lagrangian, introduced by Krupka \cite{Krupka-Fund.Lep.eq.}
on \textit{\textcolor{black}{first-order}} jet prolongation of a~fibered
manifold over an $n$-dimensional base. This particular Lepage equivalent
is also characterized by the following important property: it is closed
if and only if the corresponding Lagrangian is \textit{\textcolor{black}{trivial}}
(i.e. the Euler\textendash Lagrange expressions vanish identically).
This fact can also be profitably applied in formulation of first-order
\textit{\textcolor{black}{local}} variational principles (cf. Brajer\v{c}\'ik
and Krupka \cite{BrajercikKrupka}). In such situation, a global Lagrangian
for the variational functional need not be defined. The problem how
to reconstruct the variational functional from the local data occurs
in many physical theories (see Giachetta, Mangiarotti, and Sardanashvily
\cite{GMS-FieldTheory}).

In Section 2, we briefly summarize basic concepts of Lepage forms
in first-order variational field theory on fibered manifolds, including
a description of the well-known examples of Lepage equivalents, namely
the Poincar\'e\textendash Cartan form, the fundamental Lepage form,
and the Carath\'eodory form (cf. Carath\'eodory \cite{Caratheodory}).
Section 3 contains the geometric structure of manifolds of velocities
and Grassmann fibrations, adapted to our setting. A particular attention
is devoted to the Grassmann prolongations of diffeomorphisms and vector
fields, used later on within the calculus of variations. In Section
4, we study the fundamental Lepage equivalent of a~positive-homogeneous
Lagrangian, in particular, we derive its local structure and prove
that this differential $n$-form is defined on the Grassmann fibration.
Necessary and sufficient conditions for a function on manifold of
regular velocities to be positive homogeneous (the Zermelo conditions,
see e.g. McKiernan \cite{McKiernan}, Urban and Krupka \cite{UK-Debrecen})
are applied. In this sense, we follow the idea of Krupka \cite{Krupka-Debrecen},
who studied the unique Lepage equivalent of an $r$-th order Lagrangian
in fibered mechanics (the generalization of the Cartan form) under
assumption of a positive-homogeneous Lagrangian hence obtaining a
generalization of the Hilbert form. An alternative approach was applied
by Crampin and Saunders \cite{CramSaun1}, whose starting object is
the Carath\'eodory form in first-order field theory for a positive-homogeneous
Lagrangian, resulting into the Hilbert\textendash Carath\'eodory
form, which\textit{\textcolor{black}{{} differs}} from the fundamental
Lepage equivalent (see Remark \ref{Remark:Hilbert-Caratheodory}).
These two Lepage equivalents are, however, very closely related as
they both define the minimal submanifold problems (Section \ref{sec:6}).

In Section 5, the first-order variational field theory for submanifolds
is developed. First we study conditions under which a differential
$n$-form on the manifold of regular $n$-velocities is a Lepage form
(Theorem \ref{Theorem:LepCond}), and observe that horizontal component
of a Lepage form is given by a positive-homogeneous function. This
allows us to employ the fundamental Lepage equivalent of a homogeneous
Lagrangian as a basic element of the theory. We derive the infinitesimal
first variation formula and its consequences for extremals and conservation
laws in a global sense. Furthermore, we extend the classical invariant
variational principles and the Noether theory (see e.g. Kossmann-Schwarzbach
\cite{Kossmann}, Krupka \cite{Krupka-Book}) to functionals given
by Lepage forms on the Grassmann fibrations.

The results and methods of this work are illustrated on classical
example of the variational functional for minimal submanifolds (Section
\ref{sec:6}). In particular, we show that the fundamental Lepage
equivalent and the Hilbert\textendash Carath\'eodory equivalent of
the minimal submanifold Lagrangian coincide. Then we analyze the invariance
properties of the variational problem of minimal surfaces ($n=2$).
It turns out in this example that the \textquotedbl{}conservation
law\textquotedbl{} equations are completely equivalent with the Euler\textendash Lagrange
equations for extremals.

Basic underlying structures, well adapted to this paper, can be found
in Grigore and Krupka \cite{Grigore}. Throughout, we use the standard
geometric concepts: the exterior derivative $d$, the contraction
$i_{\Xi}\rho$ and the Lie derivative $\partial_{\Xi}\rho$ of a differential
form $\rho$ with respect to a vector field $\Xi$, and the pull-back
operation $*$ acting on differential forms. If $(U,\varphi)$, $\varphi=(x^{j})$,
is a chart on smooth manifold $X$, we set\textcolor{brown}{
\begin{align*}
{\color{black}\omega_{0}} & {\color{black}=\frac{1}{n!}\varepsilon_{j_{1}j_{2}\ldots j_{n}}dx^{j_{1}}\wedge dx^{j_{2}}\wedge\ldots\wedge dx^{j_{n}},}\\
{\color{black}\omega_{j}} & {\color{black}=i_{\partial/\partial x^{j}}\omega_{0}=\frac{1}{(n-1)!}\varepsilon_{ji_{2}\ldots i_{n}}dx^{i_{2}}\wedge\ldots\wedge dx^{i_{n}},}
\end{align*}
}where $\varepsilon_{i_{1}i_{2}\ldots i_{n}}$ is the Levi-Civita
permutation symbol.

\section{Lepage forms in first-order field theory on fibered spaces \label{sec:2}}

In this section we summarize basic ideas and results of \textit{first-order}
Lepage forms in the global calculus of variations on fibered spaces
over $n$-dimensional manifolds. A~comprehensive higher-order treatment
can be found in Krupka \cite{Krupka-Book}, and in particular first-order
field theory was studied by Voln\'a and Urban \cite{Volna}.

Throughout, we denote by $Y$ a fibered manifold of dimension $n+M$
over an $n$-dimensional base $X$ with projection $\pi:Y\rightarrow X$
the surjective submersion. $J^{1}Y$ (resp. $J^{2}Y$) denotes the
first (resp. second) jet prolongation of $Y$ whose elements are jets
$J_{x}^{1}\gamma$ (resp. $J_{x}^{2}\gamma$) of sections $\gamma$
of $\pi$ with source $x\in X$ and target $\gamma(x)\in Y$. The
canonical jet projections $\pi^{1}:J^{1}Y\rightarrow X$ and $\pi^{1,0}:J^{1}Y\rightarrow Y$
(resp. $\pi^{2}:J^{2}Y\rightarrow X$, $\pi^{2,0}:J^{2}Y\rightarrow Y$,
$\pi^{2,1}:J^{2}Y\rightarrow J^{1}Y$), are defined by $\pi^{1}(J_{x}^{1}\gamma)=x$,
$\pi^{1,0}(J_{x}^{1}\gamma)=\gamma(x)$ (resp. $\pi^{2}(J_{x}^{2}\gamma)=x$,
$\pi^{2,0}(J_{x}^{2}\gamma)=\gamma(x)$, $\pi^{2,1}(J_{x}^{2}\gamma)=J_{x}^{1}\gamma$).
The jet prolongation $J^{1}\gamma$ of a section $\gamma$ (resp.
$J^{2}\gamma$), defined on an open subset of $X$, is given by $J^{1}\gamma(x)=J_{x}^{1}\gamma$
(resp. $J^{2}\gamma(x)=J_{x}^{2}\gamma$). For an open subset $W$
of $Y$ we put $W^{1}=(\pi^{1,0})^{-1}(W)$, $W^{2}=(\pi^{2,0})^{-1}(W)$.
Let $(V,\psi)$, $\psi=(x^{j},y^{K})$, be a fibered chart on $Y$,
and denote by $(V^{1},\psi^{1})$, $\psi^{1}=(x^{j},y^{K},y_{l}^{K})$
(resp. $(V^{2},\psi^{2})$, $\psi^{2}=(x^{j},y^{K},y_{l}^{K},y_{lk}^{K})$),
the associated fibered chart on $J^{1}Y$ (resp. $J^{2}Y$), and by
$(\pi(V),\varphi)$, $\varphi=(x^{j})$, the associated chart on $X$,
where $y_{l}^{K}(J_{x}^{1}\gamma)=D_{l}(y^{K}\gamma\varphi^{-1})(\varphi(x))$,
$y_{lk}^{K}(J_{x}^{2}\gamma)=D_{l}D_{k}(y^{K}\gamma\varphi^{-1})(\varphi(x))$,
and $1\leq j\leq n$, $1\leq K\leq M$, $1\leq l\leq k\leq n$. A
tangent vector $\xi$ at a point $y\in Y$ is said to be $\pi$-vertical,
if $T\pi\cdot\xi=0$, and a differential form $\rho$ on $Y$ is said
to be $\pi$-horizontal, if for every point $y\in Y$ the contraction
$i_{\xi}\rho(y)$ vanishes whenever $\xi\in T_{y}Y$ is a $\pi$-vertical
vector. The concepts of $\pi^{1}$-, $\pi^{2}$-, $\pi^{1,0}$-, and
$\pi^{2,0}$-horizontal forms are introduced analogously. A vector
field $\Xi$ on $Y$ is called $\pi$\textit{\textcolor{black}{-projectable}},
if there exists a~vector field $\xi$ on $X$ such that $T\pi\cdot\Xi=\xi\circ\pi$.
In a~fibered chart $(V,\psi)$, $\psi=(x^{j},y^{K})$, a~$\pi$-projectable
vector field $\Xi$ has an expression $\Xi=\xi^{j}(\partial/\partial x^{j})+\Xi^{K}(\partial/\partial y^{K})$,
where $\xi^{j}=\xi^{j}(x^{l})$, $\Xi^{K}=\Xi^{K}(x^{l},y_{k}^{L})$.

Let $q\geq1$ be an integer. For any open set $W\subset Y$, we denote
by $\Omega_{q}^{1}W$ (resp. $\Omega_{q}^{2}W$) the $\Omega_{0}^{1}W$-module
(resp. $\Omega_{0}^{2}W$-module) of smooth differential $q$-forms
defined on $W^{1}$ (resp. $W^{2}$), where $\Omega_{0}^{1}W$ (resp.
$\Omega_{0}^{2}W$) is the ring of smooth functions on $W^{1}$ (resp.
$W^{2}$). Clearly, $\pi^{1}$-horizontal (resp. $\pi^{1,0}$-horizontal)
$q$-forms on $W^{1}$ constitute submodule of the $\Omega_{0}^{1}W$-module
$\Omega_{q}^{1}W$, denoted by $\Omega_{q,X}^{1}W$ (resp. $\Omega_{q,Y}^{1}W$);
the modules of $\pi^{2}$-horizontal (resp. $\pi^{2,0}$-horizontal)
$q$-forms on $W^{2}$ are denoted by $\Omega_{q,X}^{2}W$ (resp.
$\Omega_{q,Y}^{2}W$). A morphism of exterior algebras $\Omega_{q}^{1}W\ni\rho\rightarrow h\rho\in\Omega_{q,X}^{2}W$,
defined with respect to any fibered chart $(V,\psi)$, $\psi=(x^{i},y^{K})$,
by the identities,

\[
hf=f\circ\pi^{2,1},\quad hdx^{i}=dx^{i},\quad hdy^{K}=y_{k}^{K}dx^{k},\quad hdy_{j}^{K}=y_{jk}^{K}dx^{k},
\]
where $f:V^{1}\rightarrow\mathrm{\mathbf{R}}$ is a differentiable
function, is called the $\pi$-\textit{horizontalization}. In particular,
$hdf=(d_{i}f)dx^{i}$, where $d_{i}f=\partial f/\partial x^{i}+(\partial f/\partial y^{K})y_{i}^{K}+(\partial f/\partial y_{j}^{K})y_{ji}^{K}$
is the $i$-th formal derivative operator associated to $(V,\psi)$.
Note that for any section $\gamma$ of $Y$, we have $J^{1}\gamma^{*}\rho=J^{2}\gamma^{*}h\rho$.
A differential $q$-form $\rho\in\Omega_{q}^{1}W$ is said to be \textit{contact},
if $J^{1}\gamma^{*}\rho=0$ for all sections $\gamma$ of $Y$ defined
on an open subset of $X$ with values in $W$; this condition is equivalent
to $h\rho=0$. If $(V,\psi)$, $\psi=(x^{i},y^{K})$, is a~fibered
chart on $Y$, then the forms $dx^{i}$, $\omega^{K}$, $dy_{j}^{K}$,
where
\begin{equation}
\omega^{K}=dy^{K}-y_{l}^{K}dx^{l},\label{eq:omega}
\end{equation}
constitute a basis of linear forms on $V^{1}$. For $2\leq q\leq n$,
any contact $q$-form on $W^{1}$ is locally generated by the contact
forms $\omega^{K}$ and $d\omega^{K}$ (cf. Krupka \cite{Krupka-Book}).
Any differential $q$-form $\rho\in\Omega_{q}^{1}W$ has a unique
invariant decomposition
\[
(\pi^{2,1})^{*}\rho=h\rho+\sum_{k=1}^{q}p_{k}\rho,
\]
where $p_{k}\rho$ is the $k$-\textit{contact component} of $\rho$,
containing exactly $k$ exterior factors $\omega^{K}$ in any fibered
chart $(V,\psi)$. 

Any element $\lambda\in\Omega_{n,X}^{1}W$, i.e. a $\pi^{1}$-horizontal
$n$-form on the open set $W^{1}\subset J^{1}Y$, is called a \textit{Lagrangian}
for $Y$ of order $1$. In a fibered chart $(V,\psi)$, $\psi=(x^{i},y^{K})$,
where $V\subset W$, $\lambda$ is expressed by
\[
\lambda=\mathscr{L}\omega_{0},
\]
where $\omega_{0}=dx^{1}\wedge dx^{2}\wedge\ldots\wedge dx^{n}$ is
the (local) volume element and $\mathscr{L}:V^{1}\rightarrow\mathbf{R}$
is the \textit{Lagrange function}, associated to $\lambda$.

In accordance with the general theory of Lepage forms (cf. Krupka
\cite{Krupka-Lepage}), we say that an $n$-form $\Theta_{\lambda}\in\Omega_{n}^{1}W$
is a\textit{~Lepage equivalent} of $\lambda$ on $W^{1}\subset J^{1}Y$,
if the following two conditions are satisfied: 

(a) $h\Theta_{\lambda}=(\pi^{2,1})^{*}\lambda$ (i.e. $\Theta_{\lambda}$
is \textit{equivalent }with $\lambda$), and 

(b) $hi_{\xi}d\Theta_{\lambda}=0$ for arbitrary $\pi^{1,0}$-vertical
vector field $\xi$ on $W^{1}$ (i.e. $\Theta_{\lambda}$ is a\textit{~Lepage
form}). 

The following two theorems describe the structure of Lepage forms
and Lepage equivalents of a Lagrangian (see Krupka \cite{Krupka-Lepage,Krupka-Book}).
\begin{thm}
\label{Theorem:LepageForms}An $n$-form $\rho\in\Omega_{n}^{1}W$
is a Lepage form if and only if for any fibered chart $(V,\psi)$,
$\psi=(x^{i},y^{K})$, on $Y$, where $V\subset W$,
\begin{equation}
(\pi^{2,1})^{*}\rho=\Theta+d\mu+\eta,\label{eq:LepFormExp}
\end{equation}
where the principal component $\Theta$ is expressed as
\[
\Theta=f_{0}\omega_{0}+\left(\frac{\partial f_{0}}{\partial y_{j}^{K}}-d_{p}\frac{\partial f_{0}}{\partial y_{pj}^{K}}\right)\omega^{K}\wedge\omega_{j}+\frac{\partial f_{0}}{\partial y_{ij}^{K}}\omega_{i}^{K}\wedge\omega_{j},
\]
$f_{0}$ is a differentiable function on $V^{2}\subset J^{2}Y$, given
by $h\rho=f_{0}\omega_{0}$, $\mu$ is a contact $(n-1)$-form, and
an $n$-form $\eta$ has the order of contactness $\geq2$.
\end{thm}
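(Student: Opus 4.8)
The plan is to fix a fibered chart $(V,\psi)$, pass to the canonical contact decomposition of $(\pi^{2,1})^{*}\rho$, reduce the Lepage condition $hi_{\xi}d\rho=0$ to an explicit finite system on its coefficients, and then absorb the residual freedom into an exact contact form. First I would write $(\pi^{2,1})^{*}\rho=h\rho+\sum_{k\ge1}p_{k}\rho$ with $h\rho=f_{0}\omega_{0}$, and observe that on jet prolongations $d$ never decreases the order of contactness: $d\omega^{K}=dx^{l}\wedge\omega_{l}^{K}$, $d\omega_{l}^{K}=dx^{m}\wedge\omega_{lm}^{K}$, $d\omega_{j}=0$, $d\omega_{ij}=0$, and the differential of a function only adds contact factors. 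Hence $d\bigl(\sum_{k\ge2}p_{k}\rho\bigr)$ has contactness $\ge2$, so $i_{\xi}$ of it has contactness $\ge1$ and is annihilated by $h$; therefore $hi_{\xi}d\rho=hi_{\xi}d(f_{0}\omega_{0}+p_{1}\rho)$. Since $\rho$ is of order one, in the chart $p_{1}\rho=\Phi_{K}^{j}\,\omega^{K}\wedge\omega_{j}+\Psi_{K}^{lj}\,\omega_{l}^{K}\wedge\omega_{j}$ with $\Phi_{K}^{j},\Psi_{K}^{lj}$ functions on $V^{2}$, and — independently of the Lepage property — the symmetrization obeys $\Psi_{K}^{(lj)}=\partial f_{0}/\partial y_{lj}^{K}$, because the only source of second-order dependence in $f_{0}$ is the horizontal pieces $y_{li}^{K}dx^{i}$ split off from the factors $dy_{l}^{K}$ present in $\rho$; this identity is the mechanism that makes the theorem work.

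Next I would perform the contraction. Taking the general $\pi^{1,0}$-vertical field $\xi=\xi_{l}^{K}\,\partial/\partial y_{l}^{K}$ on $V^{1}$ and using $i_{\xi}\omega^{K}=0$, $i_{\xi}\omega_{l}^{K}=\xi_{l}^{K}$, $i_{\xi}dx^{i}=i_{\xi}\omega_{j}=0$, $h\omega^{K}=h\omega_{l}^{K}=0$, $dx^{i}\wedge\omega_{j}=\delta_{j}^{i}\omega_{0}$, every term carrying an $h$ of a contact factor drops out and one is left with
\[
hi_{\xi}d\rho=\left(\frac{\partial f_{0}}{\partial y_{l}^{K}}-\Phi_{K}^{l}-d_{j}\Psi_{K}^{lj}\right)\xi_{l}^{K}\,\omega_{0}.
\]
As the $\xi_{l}^{K}$ are arbitrary, $\rho$ is a Lepage form if and only if $\Phi_{K}^{l}=\partial f_{0}/\partial y_{l}^{K}-d_{j}\Psi_{K}^{lj}$ for all $l,K$, whereas $\Psi_{K}^{lj}$ (apart from its symmetric part, already fixed) and all $p_{k}\rho$ with $k\ge2$ remain free. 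Substituting here $\Psi_{K}^{lj}=\partial f_{0}/\partial y_{lj}^{K}$, for which $d_{j}\Psi_{K}^{lj}=d_{p}\partial f_{0}/\partial y_{pl}^{K}$ by symmetry, returns precisely the coefficient $\partial f_{0}/\partial y_{j}^{K}-d_{p}\partial f_{0}/\partial y_{pj}^{K}$ of $\omega^{K}\wedge\omega_{j}$ in $\Theta$; in particular $hi_{\xi}d\Theta=0$, i.e. the explicit $n$-form $\Theta$ is itself a Lepage form and $h\Theta=f_{0}\omega_{0}$.

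Finally I would integrate the remaining freedom. Put $\sigma:=(\pi^{2,1})^{*}\rho-\Theta$, so that $h\sigma=0$, $p_{k}\sigma=p_{k}\rho$ for $k\ge2$, and, using the relation for $\Phi$, $p_{1}\sigma=-(d_{j}S_{K}^{lj})\,\omega^{K}\wedge\omega_{l}+S_{K}^{lj}\,\omega_{l}^{K}\wedge\omega_{j}$, where $S_{K}^{lj}:=\Psi_{K}^{lj}-\partial f_{0}/\partial y_{lj}^{K}$ is antisymmetric in $l,j$ by the structural identity. With the contact $(n-1)$-form $\mu:=\frac{1}{2}S_{K}^{ij}\,\omega^{K}\wedge\omega_{ij}$, where $\omega_{ij}=i_{\partial/\partial x^{j}}i_{\partial/\partial x^{i}}\omega_{0}$, a short computation using $d\omega^{K}=dx^{m}\wedge\omega_{m}^{K}$, $d\omega_{ij}=0$ and $dx^{m}\wedge\omega_{ij}=\delta_{j}^{m}\omega_{i}-\delta_{i}^{m}\omega_{j}$ gives, modulo contactness $\ge2$, $d\mu\equiv S_{K}^{ij}\,\omega_{i}^{K}\wedge\omega_{j}-(d_{j}S_{K}^{ij})\,\omega^{K}\wedge\omega_{i}$, i.e. $p_{1}(d\mu)=p_{1}\sigma$. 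Then $\eta:=\sigma-d\mu$ is contact (as $d\mu$ is, $\mu$ being contact) with $h\eta=0$ and $p_{1}\eta=0$, hence has contactness $\ge2$, and $(\pi^{2,1})^{*}\rho=\Theta+d\mu+\eta$. Conversely, if such a decomposition holds in every chart, then $(\pi^{2,1})^{*}d\rho=d\Theta+d\eta$ since $d(d\mu)=0$, and $hi_{\xi}d\Theta=0$ by the previous paragraph while $hi_{\xi}d\eta=0$ by the contactness argument of the first paragraph; thus $\rho$ is a Lepage form.

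The step I expect to be the main obstacle is establishing the structural identity $\Psi_{K}^{(lj)}=\partial f_{0}/\partial y_{lj}^{K}$ in full generality together with the attendant index bookkeeping: one must track how the contactness $\ge2$ part of $\rho$, after the substitutions $dy^{K}=\omega^{K}+y_{i}^{K}dx^{i}$ and $dy_{l}^{K}=\omega_{l}^{K}+y_{li}^{K}dx^{i}$, contributes simultaneously to $f_{0}$ and to the $\omega_{l}^{K}\wedge\omega_{j}$-part of $p_{1}\rho$, and check that the resulting symmetrizations are compatible. Once this is secured, all remaining steps are routine manipulations with the operators $h$, $i_{\xi}$ and $d$.
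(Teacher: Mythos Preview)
The paper does not actually prove this theorem: it is stated as a background result with the attribution ``see Krupka \cite{Krupka-Lepage,Krupka-Book}'', and no argument is supplied. There is therefore nothing in the paper to compare your proof against directly.

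That said, your proposal reproduces the standard proof one finds in those references. The outline---reduce $hi_{\xi}d\rho$ to the $0$- and $1$-contact pieces by the fact that $d$ preserves and $i_{\xi}$ for $\pi^{1,0}$-vertical $\xi$ lowers contactness by at most one; extract the scalar condition $\Phi_{K}^{l}=\partial f_{0}/\partial y_{l}^{K}-d_{j}\Psi_{K}^{lj}$; then absorb the antisymmetric residue $S_{K}^{lj}$ into $d\mu$ with $\mu=\tfrac{1}{2}S_{K}^{ij}\omega^{K}\wedge\omega_{ij}$---is exactly Krupka's argument. Your identification of the structural identity $\Psi_{K}^{(lj)}=\partial f_{0}/\partial y_{lj}^{K}$ as the crux is accurate: it is precisely the statement that the only second-order dependence of $f_{0}=h\rho$ comes from horizontalizing the $dy_{l}^{K}$ factors present in $\rho\in\Omega_{n}^{1}W$, and this is what forces the symmetric part of the $\omega_{l}^{K}\wedge\omega_{j}$ coefficient.

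One small caution on bookkeeping: you freely pass between $J^{1}Y$ and $J^{2}Y$ (e.g.\ writing $d\omega^{K}=dx^{l}\wedge\omega_{l}^{K}$, which holds only after the $(\pi^{2,1})^{*}$ pullback), and the horizontalization $h$ appearing in the Lepage condition is the one from $\Omega^{1}$ to $\Omega^{2}$, whereas the $h$ you would need for $d\Theta$ or $d\eta$ (objects on $J^{2}Y$) lands on $J^{3}Y$. This does not break the argument---pullbacks commute with $d$, $i_{\xi}$, and contactness is preserved under $(\pi^{s,r})^{*}$---but in a written-up version you should be explicit about which jet order each object lives on, or (cleaner) phrase the Lepage condition as ``$p_{1}d\rho$ is $\pi^{2,0}$-horizontal'' and argue via $p_{1}d\rho=p_{1}d\Theta$, since $p_{1}d(d\mu)=0$ and $p_{1}d\eta=0$.
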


\begin{cor}
Let $\lambda\in\Omega_{n}^{1}W$ be a Lagrangian of order $1$, expressed
by $\lambda=\mathscr{L}\omega_{0}$. A Lepage form $\rho\in\Omega_{n}^{1}W$
is a Lepage equivalent of $\lambda$ if and only if the principal
component $\Theta$ of $\rho$ is defined on $W^{1}$ and has an expression
\begin{equation}
\Theta=\mathscr{L}\omega_{0}+\frac{\partial\mathscr{L}}{\partial y_{j}^{K}}\omega^{K}\wedge\omega_{j}.\label{eq:Poincare-Cartan}
\end{equation}
\end{cor}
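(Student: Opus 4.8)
The plan is to derive the statement directly from Theorem \ref{Theorem:LepageForms} by translating the equivalence condition (a) into a condition on the function $f_{0}$ appearing there. Fix a fibered chart $(V,\psi)$, $\psi=(x^{i},y^{K})$, with $V\subset W$. Since $\rho$ is a Lepage form, \eqref{eq:LepFormExp} gives $(\pi^{2,1})^{*}\rho=\Theta+d\mu+\eta$ with
\[
\Theta=f_{0}\,\omega_{0}+\Bigl(\frac{\partial f_{0}}{\partial y_{j}^{K}}-d_{p}\frac{\partial f_{0}}{\partial y_{pj}^{K}}\Bigr)\omega^{K}\wedge\omega_{j}+\frac{\partial f_{0}}{\partial y_{ij}^{K}}\,\omega_{i}^{K}\wedge\omega_{j},
\]
where $f_{0}$ is determined on $V^{2}$ by $h\rho=f_{0}\,\omega_{0}$. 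The only extra ingredient is the reformulation of (a) in terms of $f_{0}$.

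For the necessity part, assume $\rho$ is a Lepage equivalent of $\lambda=\mathscr{L}\omega_{0}$. Condition (a) reads $f_{0}\,\omega_{0}=h\rho=(\pi^{2,1})^{*}\lambda=(\mathscr{L}\circ\pi^{2,1})\,\omega_{0}$, hence $f_{0}=\mathscr{L}\circ\pi^{2,1}$. In particular $f_{0}$ is independent of the second-order variables $y_{ij}^{K}$, so $\partial f_{0}/\partial y_{ij}^{K}=0$ and $d_{p}(\partial f_{0}/\partial y_{pj}^{K})=0$, whereas $\partial f_{0}/\partial y_{j}^{K}=(\partial\mathscr{L}/\partial y_{j}^{K})\circ\pi^{2,1}$. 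Substituting into the formula above collapses $\Theta$ to $(\pi^{2,1})^{*}\bigl(\mathscr{L}\omega_{0}+(\partial\mathscr{L}/\partial y_{j}^{K})\,\omega^{K}\wedge\omega_{j}\bigr)$; thus $\Theta$ is projectable onto $W^{1}$ and there coincides with the form \eqref{eq:Poincare-Cartan}.

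For sufficiency, assume $\rho$ is a Lepage form whose principal component $\Theta$ is the $(\pi^{2,1})$-pullback of the form \eqref{eq:Poincare-Cartan}. Comparing this with the general expression for $\Theta$ above and using the uniqueness of the contact decomposition together with the linear independence of $\omega_{0}$, $\omega^{K}\wedge\omega_{j}$ and $\omega_{i}^{K}\wedge\omega_{j}$, the $0$-contact terms give $f_{0}=\mathscr{L}\circ\pi^{2,1}$. Therefore $h\rho=f_{0}\,\omega_{0}=(\pi^{2,1})^{*}\lambda$, i.e. (a) holds, and since $\rho$ was assumed to be a Lepage form, it is a Lepage equivalent of $\lambda$.

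I do not expect a genuine obstacle, as all the analytic content is carried by Theorem \ref{Theorem:LepageForms}; the proof amounts to the observation that imposing first-orderness of $h\rho$ annihilates precisely the second-order contributions to the principal component. The only point requiring a word of care is the meaning of \textquotedbl{}defined on $W^{1}$\textquotedbl{}: it should be read as $\Theta$ being the $(\pi^{2,1})$-pullback of a (necessarily unique) $n$-form on $W^{1}$, and the computation above exhibits that form as the Poincar\'e\textendash Cartan form \eqref{eq:Poincare-Cartan}.
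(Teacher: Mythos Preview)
Your argument is correct and is exactly the intended derivation: the paper states the result as an unproved corollary of Theorem~\ref{Theorem:LepageForms}, and your proof simply spells out how the equivalence condition $h\rho=(\pi^{2,1})^{*}\lambda$ forces $f_{0}=\mathscr{L}\circ\pi^{2,1}$, killing the second-order terms in the principal component and yielding~\eqref{eq:Poincare-Cartan}. There is nothing to add.
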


\begin{thm}
\label{Thm:dLep}Let $\rho\in\Omega_{n}^{1}W$ be a Lepage form expressed
by \eqref{eq:LepFormExp}. Then
\[
(\pi^{2,1})^{*}d\rho=E+F,
\]
where $E$ is a $1$-contact, $\pi^{2,0}$-horizontal $(n+1)$-form,
which has a chart expression
\[
E=\left(\frac{\partial f_{0}}{\partial y^{K}}-d_{j}\frac{\partial f_{0}}{\partial y_{j}^{K}}\right)\omega^{K}\wedge\omega_{0},
\]
with $h\rho=f_{0}\omega_{0}$, and a form $F$ has the order of contactness
$\geq2$.
\end{thm}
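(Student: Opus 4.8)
The plan is to differentiate the canonical decomposition \eqref{eq:LepFormExp} and to read off the contact components. Since the pull-back commutes with the exterior derivative and $d\circ d=0$, we have $(\pi^{2,1})^{*}d\rho=d\big((\pi^{2,1})^{*}\rho\big)=d\Theta+d(d\mu)+d\eta=d\Theta+d\eta$, because $d\,d\mu=0$; so the statement reduces to determining the contact decomposition of $d\Theta+d\eta$.

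First I would dispose of $d\eta$. The exterior derivative does not lower the order of contactness: the differential of a $k$-contact form is a sum of a $k$-contact and a $(k+1)$-contact form, whence $h\,d\sigma=h\,d\,h\sigma$ and $p_{1}\,d\sigma=p_{1}\,d\,h\sigma+p_{1}\,d\,p_{1}\sigma$ for every $\sigma$. Applying this to $\eta$, which has order of contactness $\geq 2$ (so $h\eta=0$, $p_{1}\eta=0$), gives $h(d\eta)=0$ and $p_{1}(d\eta)=0$; thus $d\eta$ has order of contactness $\geq 2$ and belongs to $F$. Applying the same to $\Theta$, and using $h\Theta=f_{0}\omega_{0}$, gives $h(d\Theta)=h\,d(f_{0}\omega_{0})=h\big(df_{0}\wedge\omega_{0}\big)=(d_{i}f_{0})\,dx^{i}\wedge\omega_{0}=0$. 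Hence $(\pi^{2,1})^{*}d\rho$ has no $0$-contact part, it remains to identify $E:=p_{1}(d\Theta)$, and we then set $F:=(\pi^{2,1})^{*}d\rho-E$; since $p_{1}$ is intrinsic, the resulting $E$ does not depend on the fibered chart.

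The computation of $p_{1}(d\Theta)$ is the substance of the proof. Write $\Theta=f_{0}\omega_{0}+A_{K}^{j}\,\omega^{K}\wedge\omega_{j}+B_{K}^{ij}\,\omega_{i}^{K}\wedge\omega_{j}$ with $A_{K}^{j}=\partial f_{0}/\partial y_{j}^{K}-d_{p}(\partial f_{0}/\partial y_{pj}^{K})$ and $B_{K}^{ij}=\partial f_{0}/\partial y_{ij}^{K}$, and differentiate term by term. I would use $d\omega_{j}=0$, the structure relations $d\omega^{K}=-\omega_{l}^{K}\wedge dx^{l}$ and $d\omega_{i}^{K}=-\omega_{il}^{K}\wedge dx^{l}$ (valid after pull-back to the next jet level, the double-$dx$ terms dropping by the symmetry of $y^{K}_{lk}$, $y^{K}_{ikl}$), together with the contraction identity $dx^{l}\wedge\omega_{j}=\delta^{l}_{j}\,\omega_{0}$, hence $dx^{l}\wedge\omega^{K}\wedge\omega_{j}=-\delta^{l}_{j}\,\omega^{K}\wedge\omega_{0}$ and $dx^{l}\wedge\omega^{K}_{i}\wedge\omega_{j}=-\delta^{l}_{j}\,\omega^{K}_{i}\wedge\omega_{0}$. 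In $df_{0}\wedge\omega_{0}$ only the contact part of $df_{0}$ contributes (the horizontal part gives $(d_{i}f_{0})dx^{i}\wedge\omega_{0}=0$), producing $(\partial f_{0}/\partial y^{K})\,\omega^{K}\wedge\omega_{0}+(\partial f_{0}/\partial y_{j}^{K})\,\omega^{K}_{j}\wedge\omega_{0}+(\partial f_{0}/\partial y^{K}_{jk})\,\omega^{K}_{jk}\wedge\omega_{0}$; in $d(A_{K}^{j}\omega^{K}\wedge\omega_{j})$ and $d(B_{K}^{ij}\omega^{K}_{i}\wedge\omega_{j})$ only the horizontal parts $(d_{l}A_{K}^{j})dx^{l}$, $(d_{l}B_{K}^{ij})dx^{l}$ keep the contactness equal to $1$, the remaining $1$-contact contributions coming from $d\omega^{K}$ and $d\omega^{K}_{i}$. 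Collecting everything, the $1$-contact part of $d\Theta$ is a combination of $\omega^{K}\wedge\omega_{0}$, $\omega^{K}_{i}\wedge\omega_{0}$, $\omega^{K}_{ij}\wedge\omega_{0}$ with coefficients $\partial f_{0}/\partial y^{K}-d_{j}A_{K}^{j}$, then $\partial f_{0}/\partial y_{i}^{K}-A_{K}^{i}-d_{j}B_{K}^{ij}$, and $\partial f_{0}/\partial y^{K}_{ij}-B_{K}^{ij}$. Substituting $A_{K}^{j}$ and $B_{K}^{ij}$, the third coefficient is $0$, the second equals $d_{p}(\partial f_{0}/\partial y^{K}_{pi})-d_{j}(\partial f_{0}/\partial y^{K}_{ij})=0$, and what remains is $\big(\partial f_{0}/\partial y^{K}-d_{j}A_{K}^{j}\big)\,\omega^{K}\wedge\omega_{0}$, which is the Euler\textendash Lagrange form $E$ of the statement. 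That $E$ is $1$-contact is immediate, and it is $\pi^{2,0}$-horizontal because $i_{\xi}(\omega^{K}\wedge\omega_{0})=0$ for every $\pi^{2,0}$-vertical vector field $\xi$.

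The hard part will be exactly this last computation: one must keep contact forms of three orders ($\omega^{K}$, $\omega^{K}_{j}$, $\omega^{K}_{jk}$) in play simultaneously, split $df_{0}$, $dA_{K}^{j}$, $dB_{K}^{ij}$ into horizontal and contact parts without slips, and check that the two auxiliary coefficients vanish identically once $A_{K}^{j}$ and $B_{K}^{ij}$ are inserted. The remaining ingredients — commutation of $d$ with pull-back, $d\,d\mu=0$, the non-decrease of contactness under $d$, and the $\pi^{2,0}$-horizontality of $E$ — are routine.
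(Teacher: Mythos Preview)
The paper does not supply its own proof of this theorem: Section~2 is a summary of known results, and Theorem~\ref{Thm:dLep} is stated with a reference to Krupka \cite{Krupka-Lepage,Krupka-Book} but without an argument. Your approach---differentiating the decomposition \eqref{eq:LepFormExp}, discarding $d\,d\mu$, pushing $d\eta$ into the $\geq 2$-contact part, and computing $p_{1}(d\Theta)$ term by term---is exactly the standard one and is correct in outline.

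One point deserves a closer look. Your computation of the coefficient of $\omega^{K}\wedge\omega_{0}$ yields $\partial f_{0}/\partial y^{K}-d_{j}A_{K}^{j}$, and after substituting $A_{K}^{j}=\partial f_{0}/\partial y_{j}^{K}-d_{p}(\partial f_{0}/\partial y_{pj}^{K})$ this is
\[
\frac{\partial f_{0}}{\partial y^{K}}-d_{j}\frac{\partial f_{0}}{\partial y_{j}^{K}}+d_{j}d_{p}\frac{\partial f_{0}}{\partial y_{pj}^{K}},
\]
i.e.\ the full \emph{second-order} Euler\textendash Lagrange expression, not merely $\partial f_{0}/\partial y^{K}-d_{j}\,\partial f_{0}/\partial y_{j}^{K}$ as written in the statement. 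You do not comment on this. In the applications the paper actually makes (Lepage equivalents of \emph{first-order} Lagrangians, where $f_{0}=\mathscr{L}$ and $\partial f_{0}/\partial y_{ij}^{K}=0$), the extra term vanishes and the two formulas agree; but for a general Lepage form $\rho\in\Omega_{n}^{1}W$ with $f_{0}$ genuinely second-order, the correct $E$ carries the additional summand, and it then lives on a higher jet space than $J^{2}Y$. You should either note this explicitly or restrict the statement to the first-order case from the outset.
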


\qquad{}In the class of Lepage equivalents of any \textit{first-order}
Lagrangian we have a possibility to determine a \textit{unique} Lepage
equivalent by means of additional requirements. Let $\lambda\in\Omega_{n,X}^{1}W$
be a Lagrangian, $\lambda=\mathscr{L}\omega_{0}$ with respect to
a fibered chart $(V,\psi)$, $\psi=(x^{i},y^{K})$, on $Y$. The following
theorems describe some well-known examples.
\begin{thm}
\textbf{\textup{\label{Theorem:Poincar-Cartan-equivalent}(Poincar\'e-Cartan
equivalent)}} There exists a unique Lepage equivalent $\Theta_{\lambda}\in\Omega_{n,Y}^{1}W$
of $\lambda$ such that the order of contactness of $\Theta{}_{\lambda}$
is $\leq1$. $\Theta{}_{\lambda}$ has a~local expression \eqref{eq:Poincare-Cartan}
with respect to a fibered chart $(V,\psi)$.
\end{thm}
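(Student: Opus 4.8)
The plan is to prove existence and uniqueness separately, taking the local formula \eqref{eq:Poincare-Cartan} as the explicit candidate and using Theorem \ref{Theorem:LepageForms} to supply the Lepage property. For existence, in every fibered chart $(V,\psi)$, $\psi=(x^{i},y^{K})$, with $V\subset W$, I set $\Theta_{\lambda}=\mathscr{L}\omega_{0}+(\partial\mathscr{L}/\partial y_{j}^{K})\,\omega^{K}\wedge\omega_{j}$, where $\lambda=\mathscr{L}\omega_{0}$ on $V^{1}$, and first check that this expression is independent of the fibered chart, so that the local forms patch to a global $n$-form on $W^{1}$. Under a fibered coordinate change $\bar{x}^{i}=\bar{x}^{i}(x^{j})$, $\bar{y}^{K}=\bar{y}^{K}(x^{j},y^{L})$ one uses the standard rules: the Lagrange function rescales by the base Jacobian $J=\det(\partial\bar{x}^{i}/\partial x^{j})$, i.e. $\bar{\mathscr{L}}=\mathscr{L}/J$; contact forms transform tensorially, $\bar{\omega}^{K}=(\partial\bar{y}^{K}/\partial y^{L})\omega^{L}$; the $(n-1)$-forms transform by $\bar{\omega}_{j}=J(\partial x^{i}/\partial\bar{x}^{j})\omega_{i}$; and, since $\mathscr{L}$ carries no second-order dependence, the chain rule gives $\partial\bar{\mathscr{L}}/\partial\bar{y}_{j}^{K}=J^{-1}(\partial\mathscr{L}/\partial y_{m}^{L})(\partial y^{L}/\partial\bar{y}^{K})(\partial\bar{x}^{j}/\partial x^{m})$. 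Multiplying these, every Jacobian and fibre factor cancels, so $(\partial\bar{\mathscr{L}}/\partial\bar{y}_{j}^{K})\bar{\omega}^{K}\wedge\bar{\omega}_{j}=(\partial\mathscr{L}/\partial y_{j}^{K})\omega^{K}\wedge\omega_{j}$, and $\Theta_{\lambda}$ is well defined. It is manifestly $\pi^{1,0}$-horizontal, hence lies in $\Omega_{n,Y}^{1}W$, and its order of contactness is $\leq1$ because $\mathscr{L}\omega_{0}$ is $0$-contact and $(\partial\mathscr{L}/\partial y_{j}^{K})\omega^{K}\wedge\omega_{j}$ is $1$-contact.

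It then remains to verify conditions (a), (b). Condition (a) is immediate from $hf=f\circ\pi^{2,1}$, $h\omega_{0}=\omega_{0}$, $h\omega^{K}=0$, which give $h\Theta_{\lambda}=(\mathscr{L}\circ\pi^{2,1})\omega_{0}=(\pi^{2,1})^{*}\lambda$. For (b) I would appeal to Theorem \ref{Theorem:LepageForms}: since $\mathscr{L}$ does not depend on the second-order variables, in any fibered chart $(\pi^{2,1})^{*}\Theta_{\lambda}$ is exactly the principal component $\Theta$ of that theorem with $f_{0}=\mathscr{L}\circ\pi^{2,1}$ and $\mu=0$, $\eta=0$ (the terms $\partial f_{0}/\partial y_{ij}^{K}$ and $d_{p}(\partial f_{0}/\partial y_{pj}^{K})$ vanish); hence $\Theta_{\lambda}$ is a Lepage form and, together with (a), a Lepage equivalent of $\lambda$ with the required properties.

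For uniqueness, let $\Theta_{\lambda},\Theta_{\lambda}'\in\Omega_{n,Y}^{1}W$ both be Lepage equivalents of $\lambda$ with order of contactness $\leq1$, and put $\rho=\Theta_{\lambda}-\Theta_{\lambda}'$. Then $\rho$ is again a Lepage form (the condition $hi_{\xi}d(\cdot)=0$ is $\mathbf{R}$-linear in the form), $h\rho=(\pi^{2,1})^{*}\lambda-(\pi^{2,1})^{*}\lambda=0$, and $\rho$ has contactness $\leq1$, so $\rho=p_{1}\rho$. In a fibered chart a $1$-contact, $\pi^{1,0}$-horizontal $n$-form is necessarily of the form $\rho=A_{K}^{j}\,\omega^{K}\wedge\omega_{j}$ with functions $A_{K}^{j}$ on $V^{1}$ (the non-contact factors must come from $dx^{i}$, and $\{\omega_{j}\}$ spans the $(n-1)$-forms in the $dx^{i}$). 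Using $d\omega_{j}=0$ and $d\omega^{K}=-dy_{l}^{K}\wedge dx^{l}$, contraction with the $\pi^{1,0}$-vertical field $\partial/\partial y_{l}^{L}$ gives $i_{\partial/\partial y_{l}^{L}}d\rho=(\partial A_{K}^{j}/\partial y_{l}^{L})\,\omega^{K}\wedge\omega_{j}-A_{L}^{l}\,\omega_{0}$, and applying $h$ (which annihilates the $\omega^{K}$-term) yields $hi_{\partial/\partial y_{l}^{L}}d\rho=-(A_{L}^{l}\circ\pi^{2,1})\,\omega_{0}$. The Lepage condition forces this to vanish for all $L,l$, whence $A_{K}^{j}=0$, $\rho=0$, and $\Theta_{\lambda}=\Theta_{\lambda}'$.

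I expect the chart-independence check in the existence part to be the only genuinely computational step, and the place where the first-order structure (the absence of second-order dependence, the tensorial transformation of contact forms) is actually used; once that is settled, the Lepage property is handed over by Theorem \ref{Theorem:LepageForms}, and uniqueness collapses to the short contraction argument above.
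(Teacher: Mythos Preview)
Your argument is correct. Note, however, that in the paper this theorem is stated without proof: Section~\ref{sec:2} is an explicit summary of known results from \cite{Krupka-Lepage,Krupka-Book}, and Theorem~\ref{Theorem:Poincar-Cartan-equivalent} is simply quoted, together with Theorem~\ref{Theorem:LepageForms} and its Corollary, as background for the later sections.

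That said, your proof is exactly in the spirit of how the result is obtained in the sources the paper cites, and it dovetails with the surrounding statements. For existence you invoke Theorem~\ref{Theorem:LepageForms} with $f_{0}=\mathscr{L}\circ\pi^{2,1}$, $\mu=0$, $\eta=0$; this is precisely the content of the Corollary preceding Theorem~\ref{Theorem:Poincar-Cartan-equivalent}, which says that any Lepage equivalent of a first-order $\lambda$ has principal component \eqref{eq:Poincare-Cartan}. Your chart-independence computation is the only step not already packaged in the paper, and it is carried out correctly (the first-order hypothesis is what makes $\partial\mathscr{L}/\partial y^{K}_{j}$ transform tensorially without second-order correction terms). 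For uniqueness, your direct contraction argument is clean: writing the difference as $\rho=A_{K}^{j}\omega^{K}\wedge\omega_{j}$ (which uses both the $\pi^{1,0}$-horizontality and the contactness $\leq1$ hypotheses), the identity $hi_{\partial/\partial y_{l}^{L}}d\rho=-(A_{L}^{l}\circ\pi^{2,1})\omega_{0}$ forces $A_{K}^{j}=0$. An equivalent route, closer to how the paper organises things, is to read uniqueness off the decomposition \eqref{eq:LepFormExp}: among Lepage equivalents of $\lambda$, the conditions ``$\pi^{1,0}$-horizontal'' and ``contactness $\leq1$'' kill $d\mu$ and $\eta$, leaving only the principal component \eqref{eq:Poincare-Cartan}. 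Both arguments are short and essentially the same in content.
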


\begin{thm}
\textbf{\textup{\label{Fundamental-Theorem}(Fundamental Lepage equivalent)}}
The differential $n$-form $Z_{\lambda}\in\Omega_{n}^{1}W$, given
in a chart $(V,\psi)$ by the expression
\begin{align}
Z_{\lambda} & =\mathscr{L}\omega_{0}+\sum_{k=1}^{n}\frac{1}{(n-k)!}\frac{1}{(k!)^{2}}\frac{\partial^{k}\mathscr{L}}{\partial y_{j_{1}}^{K_{1}}\partial y_{j_{2}}^{K_{2}}\ldots\partial y_{j_{k}}^{K_{k}}}\varepsilon_{j_{1}j_{2}\ldots j_{k}i_{k+1}i_{k+2}\ldots i_{n}}\label{eq:Fundamental}\\
 & \quad\cdot\omega^{K_{1}}\land\omega^{K_{2}}\wedge\ldots\wedge\omega^{K_{k}}\wedge dx^{i_{k+1}}\wedge dx^{i_{k+2}}\wedge\ldots\wedge dx^{i_{n}},\nonumber 
\end{align}
is a Lepage equivalent of the first-order Lagrangian $\lambda$.
\end{thm}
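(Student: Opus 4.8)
The plan is to reduce the verification to the two structure results quoted above, Theorem~\ref{Theorem:LepageForms} and its Corollary, so that essentially nothing new has to be computed: it suffices to exhibit, in an arbitrary fibered chart, a decomposition of $Z_{\lambda}$ of the shape \eqref{eq:LepFormExp} whose principal component is the Poincar\'e--Cartan form \eqref{eq:Poincare-Cartan}, and to check that $hZ_{\lambda}=(\pi^{2,1})^{*}\lambda$.

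First I would split the defining formula \eqref{eq:Fundamental} as $Z_{\lambda}=\mathscr{L}\omega_{0}+A_{1}+\sum_{k=2}^{n}A_{k}$, where $A_{k}$ denotes the $k$-th summand, i.e. the one carrying exactly $k$ exterior factors $\omega^{K_{i}}$. For $A_{1}$ I would insert the chart expression $\omega_{j}=\frac{1}{(n-1)!}\varepsilon_{ji_{2}\ldots i_{n}}dx^{i_{2}}\wedge\ldots\wedge dx^{i_{n}}$ recalled in the Introduction to obtain
\[
A_{1}=\frac{1}{(n-1)!}\frac{\partial\mathscr{L}}{\partial y_{j}^{K}}\varepsilon_{ji_{2}\ldots i_{n}}\,\omega^{K}\wedge dx^{i_{2}}\wedge\ldots\wedge dx^{i_{n}}=\frac{\partial\mathscr{L}}{\partial y_{j}^{K}}\,\omega^{K}\wedge\omega_{j},
\]
so that $\mathscr{L}\omega_{0}+A_{1}$ is precisely the right-hand side of \eqref{eq:Poincare-Cartan}. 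Since $\mathscr{L}$ is a function on $W^{1}$, this is exactly the principal component $\Theta$ allowed by Theorem~\ref{Theorem:LepageForms} with $f_{0}=\mathscr{L}$, because the two $J^{2}Y$-correction terms $d_{p}\,\partial f_{0}/\partial y_{pj}^{K}$ and $\partial f_{0}/\partial y_{ij}^{K}$ vanish identically. Putting $\eta:=\sum_{k=2}^{n}A_{k}$ and $\mu:=0$ then gives the decomposition $(\pi^{2,1})^{*}Z_{\lambda}=\Theta+d\mu+\eta$ in every fibered chart.

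Next I would check that $\eta$ has order of contactness $\geq 2$: each summand of $A_{k}$ for $k\geq 2$ is a product of exactly $k$ contact one-forms $\omega^{K_{1}},\ldots,\omega^{K_{k}}$ with $n-k$ factors $dx^{i}$ and a coefficient which is a function on $J^{1}Y$, hence $p_{0}\eta=p_{1}\eta=0$. Consequently $hZ_{\lambda}=h(\mathscr{L}\omega_{0})=(\mathscr{L}\circ\pi^{2,1})\omega_{0}=(\pi^{2,1})^{*}\lambda$, which is condition~(a); and by Theorem~\ref{Theorem:LepageForms} the displayed decomposition shows that $Z_{\lambda}$ is a Lepage form, which is condition~(b). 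Equivalently, one may simply invoke the Corollary: $Z_{\lambda}$ is a Lepage form whose principal component is defined on $W^{1}$ and coincides with \eqref{eq:Poincare-Cartan}, hence it is a Lepage equivalent of $\lambda$.

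The only genuinely delicate point is the index bookkeeping with the Levi-Civita symbols needed to recognize $A_{1}=\frac{\partial\mathscr{L}}{\partial y_{j}^{K}}\omega^{K}\wedge\omega_{j}$ and, more broadly, to be confident that the splitting $Z_{\lambda}=\Theta+\eta$ matches the template \eqref{eq:LepFormExp} term by term; once it is set up, the conclusion is immediate. For completeness I would note the self-contained alternative that avoids the structure theorem altogether --- compute $dZ_{\lambda}$, contract with the local vector fields $\partial/\partial y_{j}^{K}$ spanning the $\pi^{1,0}$-vertical distribution, and verify directly that the horizontal part of the contraction vanishes --- but this forces repeated antisymmetrization identities for the $\varepsilon$-symbols (in the spirit of the Zermelo-type relations) and is decidedly more laborious than the route above.
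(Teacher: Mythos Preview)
Your proposal is correct. The paper itself states this theorem as background material from Section~\ref{sec:2} without giving a proof (it is attributed to Krupka \cite{Krupka-Fund.Lep.eq.} in the subsequent Remark), so there is no in-paper argument to compare against; your reduction to Theorem~\ref{Theorem:LepageForms} and its Corollary by isolating the $k=1$ summand as the Poincar\'e--Cartan form and observing that the $k\geq 2$ summands have contactness $\geq 2$ is exactly the natural route and all the index bookkeeping checks out.
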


\begin{rem}
The crucial property of the fundamental Lepage equivalent is characterized
by two equivalent conditions: (i) $Z_{\lambda}$ is closed, (ii) $\lambda$
is trivial (i.e. the Euler\textendash Lagrange expressions associated
with $\lambda$ vanish identically). Moreover, if $\rho\in\Omega_{n}^{0}W$,
then $Z_{h\rho}=(\pi^{1,0})^{*}\rho$. This equivalent was discovered
by Krupka \cite{Krupka-Fund.Lep.eq.} (see also Betounes \cite{Betounes}).
Attempts to generalize the fundamental Lepage equivalent to higher-order
spaces appeared for dimension $n=2$ only, cf. Saunders and Crampin
\cite{Saunders-JGP}.

\end{rem}

\begin{thm}
\textbf{\textup{\label{Theorem:Caratheodory}(Carath\'eodory equivalent)
}}Let $\lambda\in\Omega_{n,X}^{1}W$ be a non-vanishing first-order
Lagrangian. Then the differential $n$-form $\Lambda_{\lambda}\in\Omega_{n}^{1}W$,
given in a chart $(V,\psi)$ by the expression
\begin{align}
\Lambda_{\lambda} & =\mathscr{L}\left(dx^{1}+\frac{1}{\mathscr{L}}\frac{\partial\mathscr{L}}{\partial y_{1}^{\sigma_{1}}}\omega^{\sigma_{1}}\right)\wedge\left(dx^{2}+\frac{1}{\mathscr{L}}\frac{\partial\mathscr{L}}{\partial y_{2}^{\sigma_{2}}}\omega^{\sigma_{2}}\right)\label{eq:CaratheodoryForm}\\
 & \qquad\wedge\ldots\wedge\left(dx^{n}+\frac{1}{\mathscr{L}}\frac{\partial\mathscr{L}}{\partial y_{n}^{\sigma_{n}}}\omega^{\sigma_{n}}\right),\nonumber 
\end{align}
is a Lepage equivalent of $\lambda$.
\end{thm}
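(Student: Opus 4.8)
The plan is to verify the two defining conditions of a Lepage equivalent straight from the product formula \eqref{eq:CaratheodoryForm}. Abbreviate $\mathscr{L}_{i}^{\sigma}=\partial\mathscr{L}/\partial y_{i}^{\sigma}$ and put
\[
\pi^{i}=\mathscr{L}\,dx^{i}+\mathscr{L}_{i}^{\sigma}\omega^{\sigma},\qquad\Pi=\pi^{1}\wedge\pi^{2}\wedge\ldots\wedge\pi^{n},
\]
so that $\Lambda_{\lambda}=\mathscr{L}^{1-n}\Pi$; the non-vanishing of $\lambda$ enters only to make $\mathscr{L}^{1-n}$ a smooth function on $W^{1}$, and then $\Lambda_{\lambda}\in\Omega_{n}^{1}W$. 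First I would dispose of condition (a): since $h$ is a morphism of exterior algebras with $h\,dx^{i}=dx^{i}$, $h\omega^{\sigma}=0$ and $hf=f\circ\pi^{2,1}$, we get $h\pi^{i}=(\mathscr{L}\circ\pi^{2,1})\,dx^{i}$, whence $h\Lambda_{\lambda}=(\mathscr{L}\circ\pi^{2,1})\,dx^{1}\wedge\ldots\wedge dx^{n}=(\pi^{2,1})^{*}(\mathscr{L}\omega_{0})=(\pi^{2,1})^{*}\lambda$.

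The core is condition (b), that $h\,i_{\xi}d\Lambda_{\lambda}=0$ for every $\pi^{1,0}$-vertical vector field $\xi$ on $W^{1}$. I would first make two reductions. The $\pi^{1,0}$-vertical vectors are, in any fibered chart, $C^{\infty}$-combinations of the fields $\partial/\partial y_{j}^{\sigma}$, and since $i_{f\xi}=f\,i_{\xi}$ while $h(f\rho)=(f\circ\pi^{2,1})h\rho$, the assignment $\xi\mapsto h\,i_{\xi}d\Lambda_{\lambda}$ is linear over functions; hence it is enough to treat $\xi=\partial/\partial y_{j}^{\sigma}$ with $j,\sigma$ fixed. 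Moreover, written in the chart, $\Lambda_{\lambda}$ involves only the differentials $dx^{i}$ and $dy^{\sigma}$ (the $\omega^{\sigma}$ expand into these), so $i_{\xi}\Lambda_{\lambda}=0$, and Cartan's formula $\partial_{\xi}=i_{\xi}d+d\,i_{\xi}$ collapses to $i_{\xi}d\Lambda_{\lambda}=\partial_{\xi}\Lambda_{\lambda}$. Thus (b) reduces to the single identity $h\,\partial_{\xi}\Lambda_{\lambda}=0$.

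To establish this identity I would compute $\partial_{\xi}\mathscr{L}=\mathscr{L}_{j}^{\sigma}$, $\partial_{\xi}\omega^{\rho}=-\delta^{\rho}_{\sigma}\,dx^{j}$, hence $\partial_{\xi}\pi^{i}=\mathscr{L}_{j}^{\sigma}\,dx^{i}-\mathscr{L}_{i}^{\sigma}\,dx^{j}+(\partial_{\xi}\mathscr{L}_{i}^{\rho})\,\omega^{\rho}$; applying $h$ annihilates the last term and leaves $h\,\partial_{\xi}\pi^{i}=\mathscr{L}_{j}^{\sigma}\,dx^{i}-\mathscr{L}_{i}^{\sigma}\,dx^{j}$ (coefficients understood composed with $\pi^{2,1}$), while $h\Pi=(\mathscr{L}\circ\pi^{2,1})^{n}\omega_{0}$. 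Feeding these into
\[
\partial_{\xi}\Lambda_{\lambda}=(1-n)\mathscr{L}^{-n}\mathscr{L}_{j}^{\sigma}\,\Pi+\mathscr{L}^{1-n}\sum_{i=1}^{n}\pi^{1}\wedge\ldots\wedge\partial_{\xi}\pi^{i}\wedge\ldots\wedge\pi^{n}
\]
and applying $h$, the prefactor term contributes $(1-n)\mathscr{L}_{j}^{\sigma}\omega_{0}$; in the sum the term $\mathscr{L}_{j}^{\sigma}\,dx^{i}$ sitting in the $i$-th slot yields $\mathscr{L}_{j}^{\sigma}\omega_{0}$ for each $i$, that is $n\mathscr{L}_{j}^{\sigma}\omega_{0}$ altogether, whereas the term $-\mathscr{L}_{i}^{\sigma}\,dx^{j}$ in the $i$-th slot yields $-\mathscr{L}_{j}^{\sigma}\omega_{0}$, since only $i=j$ survives and the remaining summands vanish by antisymmetry of $dx^{1}\wedge\ldots\wedge dx^{n}$. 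Adding up, $h\,\partial_{\xi}\Lambda_{\lambda}=\bigl((1-n)+n-1\bigr)\mathscr{L}_{j}^{\sigma}\omega_{0}=0$, which is (b); thus $\Lambda_{\lambda}$ is a Lepage equivalent of $\lambda$. The only place I expect to need care is this last combinatorial cancellation — in particular the antisymmetry argument discarding the $i\neq j$ contributions of the $dx^{j}$-term; no structural input beyond Section \ref{sec:2} is required. An alternative, slightly lengthier route would avoid Cartan's formula and instead expand \eqref{eq:CaratheodoryForm}, identify its components of contactness $0$ and $1$ with those of the Poincar\'e\textendash Cartan form \eqref{eq:Poincare-Cartan}, and conclude that $\Lambda_{\lambda}$ has the form required in Theorem \ref{Theorem:LepageForms}.
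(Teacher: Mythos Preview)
Your argument is correct. The two defining conditions are checked cleanly: condition (a) is immediate from $h\omega^{\sigma}=0$, and for (b) the reduction to $\xi=\partial/\partial y_{j}^{\sigma}$ together with $i_{\xi}\Lambda_{\lambda}=0$ (since $\Lambda_{\lambda}$ is $\pi^{1,0}$-horizontal) and Cartan's formula is sound. The final cancellation $(1-n)+n-1=0$ is exactly right, and your handling of the $-\mathscr{L}_{i}^{\sigma}dx^{j}$ term is fine: for $i\neq j$ the factor $dx^{j}$ repeats in the wedge product and kills the term.

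As to comparison with the paper: there is nothing to compare against. Theorem~\ref{Theorem:Caratheodory} appears in Section~\ref{sec:2}, which is explicitly a summary of known results from the literature (Carath\'eodory, Dedecker), and the paper states it without proof. Your direct verification via the Lie derivative is a perfectly good self-contained argument; the alternative you mention at the end---expanding \eqref{eq:CaratheodoryForm} and matching the $0$- and $1$-contact components with the Poincar\'e\textendash Cartan form \eqref{eq:Poincare-Cartan}, then invoking Theorem~\ref{Theorem:LepageForms}---is the route more commonly found in textbook treatments, but yours is shorter and avoids the bookkeeping of the contact decomposition.
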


\begin{rem}
The $n$-form $\Lambda_{\lambda}$ \eqref{eq:CaratheodoryForm} is
called the \textit{\textcolor{black}{Carath\'eodory form}}. It can
be shown that $\Lambda_{\lambda}$ is invariant with respect to \textit{all}
coordinate transformations on $Y$ (see Carath\'eodory \cite{Caratheodory},
Dedecker \cite{Dedecker}). 
\end{rem}

The next lemma is needed for further proofs.
\begin{lem}
\label{Lemma-koef}Suppose a $q$-form $\rho\in\Omega_{q}^{1}W$ has
a chart expression
\[
\rho=\sum_{k=0}^{q}\frac{1}{k!(q-k)!}B_{K_{1}\ldots K_{k}i_{k+1}\ldots i_{q}}\omega^{K_{1}}\wedge\ldots\wedge\omega^{K_{k}}\wedge dx^{i_{k+1}}\wedge\ldots\wedge dx^{i_{q}}
\]
with the coefficients skew-symmetric in all indices $K_{1},\:\ldots,\:K_{k}$,
and in all indices $i_{k+1},\:\ldots,\:i_{q}$. Then
\begin{equation}
\rho=\sum_{k=0}^{q}\frac{1}{k!(q-k)!}A_{K_{1}\ldots K_{k}i_{k+1}\ldots i_{q}}dy^{K_{1}}\wedge\ldots\wedge dy^{K_{k}}\wedge dx^{i_{k+1}}\wedge\ldots\wedge dx^{i_{q}},\label{eq:forma-dy}
\end{equation}
where
\begin{align}
A_{K_{1}\ldots K_{k}i_{k+1}\ldots i_{q}} & =\sum_{l=k}^{q}(-1)^{l-k}\left(_{q-l}^{q-k}\right)B_{K_{1}\ldots K_{l}i_{l+1}\ldots i_{q}}y_{i_{k+1}}^{K_{k+1}}\ldots y_{i_{l}}^{K_{l}}\label{eq:koeficienty-aux2}\\
 & \qquad\mathrm{Alt}(i_{k+1},\:\ldots,\:i_{q}).\nonumber 
\end{align}
\end{lem}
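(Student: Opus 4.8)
The plan is to substitute the defining relation $\omega^{K}=dy^{K}-y^{K}_{l}dx^{l}$ from \eqref{eq:omega} directly into the given $\omega$-expression of $\rho$, expand, and collect the outcome in the basis $dy^{K_{1}}\wedge\ldots\wedge dy^{K_{k}}\wedge dx^{i_{k+1}}\wedge\ldots\wedge dx^{i_{q}}$. The only tools required are the binomial expansion of a wedge product of sums of two one-forms and the assumed skew-symmetry of the coefficients $B$ in their upper and lower indices, respectively; no appeal to the jet structure beyond \eqref{eq:omega} is needed, so this is really a multilinear-algebra computation.

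First, fix an integer $l$ with $0\le l\le q$ and isolate the $l$-th summand $\frac{1}{l!(q-l)!}B_{K_{1}\ldots K_{l}i_{l+1}\ldots i_{q}}\,\omega^{K_{1}}\wedge\ldots\wedge\omega^{K_{l}}\wedge dx^{i_{l+1}}\wedge\ldots\wedge dx^{i_{q}}$. Replacing each factor $\omega^{K_{a}}$ either by $dy^{K_{a}}$ or by $-y^{K_{a}}_{i_{a}}dx^{i_{a}}$ produces $2^{l}$ terms; those contributing exactly $k$ factors $dy$ correspond to the $\binom{l}{k}$ choices of a $k$-element subset $S\subseteq\{1,\ldots,l\}$ of surviving contact factors. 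Using the skew-symmetry of $B$ in its upper indices, together with the signs picked up when the factors $dx^{i_{a}}$, $a\notin S$, are commuted past the surviving $dy$'s, each of these $\binom{l}{k}$ contributions equals the one with $S=\{1,\ldots,k\}$, namely $(-1)^{l-k}B_{K_{1}\ldots K_{l}i_{l+1}\ldots i_{q}}\,y^{K_{k+1}}_{i_{k+1}}\ldots y^{K_{l}}_{i_{l}}\cdot dy^{K_{1}}\wedge\ldots\wedge dy^{K_{k}}\wedge dx^{i_{k+1}}\wedge\ldots\wedge dx^{i_{q}}$, the sign $(-1)^{l-k}$ arising from the $l-k$ substitutions $\omega^{K}\mapsto-y^{K}_{i}dx^{i}$.

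Next, sum over $l$ from $k$ to $q$ and read off the coefficient of $dy^{K_{1}}\wedge\ldots\wedge dy^{K_{k}}\wedge dx^{i_{k+1}}\wedge\ldots\wedge dx^{i_{q}}$. Since wedging with a totally skew product of $dx$'s automatically antisymmetrises in $i_{k+1},\ldots,i_{q}$, the coefficient may be replaced by its antisymmetrisation over those indices without changing $\rho$, which is what produces the operator $\mathrm{Alt}(i_{k+1},\ldots,i_{q})$ in the statement. Finally, matching the normalisations $1/(k!(q-k)!)$ on the left with $1/(l!(q-l)!)$ on the right and absorbing the subset multiplicity $\binom{l}{k}=l!/(k!(l-k)!)$, the combinatorial prefactor collapses to $(q-k)!/((q-l)!(l-k)!)=\binom{q-k}{q-l}$, yielding precisely \eqref{eq:koeficienty-aux2}.

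The binomial expansion and the index relabellings are routine; the one place that demands care is the bookkeeping of the three combinatorial factors — the subset count $\binom{l}{k}$, the two normalisation constants $1/(k!(q-k)!)$ and $1/(l!(q-l)!)$, and the antisymmetrisation — and verifying that they combine into the single coefficient $\binom{q-k}{q-l}$ with the correct sign $(-1)^{l-k}$. I expect this to be the main obstacle, although it is purely computational.
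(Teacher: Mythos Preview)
Your argument is correct, and it differs from the paper's own proof in a useful way. The paper does not substitute $\omega^{K}=dy^{K}-y^{K}_{l}dx^{l}$ directly; instead it quotes the \emph{forward} relation \eqref{eq:koeficienty-aux} (expressing $B$ in terms of $A$) from Krupka's book, observes that these identities for $0\le k\le q$ form an upper-triangular linear system in the unknowns $A_{K_{1}\ldots K_{k}i_{k+1}\ldots i_{q}}$, and then inverts it by induction on $k$ to recover \eqref{eq:koeficienty-aux2}. Your route is more elementary and self-contained: it avoids both the external reference and the inductive inversion, at the price of having to track the three combinatorial factors carefully. The paper's route, by contrast, makes the duality between \eqref{eq:koeficienty-aux} and \eqref{eq:koeficienty-aux2} transparent (they are mutually inverse triangular transformations), which is pleasant conceptually but less explicit computationally. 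Your verification that $\frac{k!(q-k)!}{l!(q-l)!}\binom{l}{k}=\binom{q-k}{q-l}$ is exactly the bookkeeping required, and your remark that the antisymmetrisation $\mathrm{Alt}(i_{k+1},\ldots,i_{q})$ is harmless because it is already enforced by the wedge $dx^{i_{k+1}}\wedge\ldots\wedge dx^{i_{q}}$ is correct.
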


\begin{proof}
If a $q$-form $\rho\in\Omega_{q,Y}^{1}W$ has a chart expression
\eqref{eq:forma-dy}, then the contact components $p_{k}\rho$ of
$\rho$, where $0\leq k\leq q$, are given by
\[
p_{k}\rho=\frac{1}{k!(q-k)!}B_{K_{1}\ldots K_{k}i_{k+1}\ldots i_{q}}\omega^{K_{1}}\wedge\ldots\wedge\omega^{K_{k}}\wedge dx^{i_{k+1}}\wedge\ldots\wedge dx^{i_{q}},
\]
where
\begin{align}
B_{K_{1}\ldots K_{k}i_{k+1}\ldots i_{q}} & =\sum_{l=k}^{q}\left(_{q-l}^{q-k}\right)A_{K_{1}\ldots K_{l}i_{l+1}\ldots i_{q}}y_{i_{k+1}}^{K_{k+1}}\ldots y_{i_{l}}^{K_{l}}\;\mathrm{Alt}(i_{k+1},\:\ldots,\:i_{q})\label{eq:koeficienty-aux}
\end{align}
(a proof can be found in Krupka \cite{Krupka-Book} for differential
forms on arbitrary finite-order jet prolongation of a fibered manifold).
The identities \eqref{eq:koeficienty-aux} for all $k$, $0\leq k\leq q$,
constitute, however, a system of multi-linear equations which can
be directly solved with respect to the coefficients $A_{K_{1}\ldots K_{k}i_{k+1}\ldots i_{q}}$.
To obtain expressions \eqref{eq:koeficienty-aux2}, one can proceed
by induction with respect to degree $k$ of contactness of $\rho$.
\end{proof}

\section{First-order velocities and Grassmann fibrations \label{sec:3}}

From now on, the concepts and results of Section 2 will be employed
for the case of a~fibered manifold $Y=\mathbf{R}^{n}\times Q$, where
$Q$ is a smooth manifold of dimension $M=n+m$ for positive integers
$n$, $m$. The Cartesian coordinates of $\mathbf{R}^{n}$ are denoted
by $x^{i}$, $1\leq i\leq n$, and the canonical volume element of
$\mathbf{R}^{n}$ is denoted by $\omega_{0}=dx^{1}\wedge dx^{2}\wedge\ldots\wedge dx^{n}$.

Denote by $T_{n}^{1}Q$ the manifold of $n$-velocities of order $1$
over $Q$. Elements of $T_{n}^{1}Q$ are $1$-jets $J_{0}^{1}\zeta\in J_{(0,y)}^{1}(\mathbf{R}^{n},Q)$
with origin $0\in\mathbf{R}^{n}$ and target $y=\zeta(0)\in Q$. The
canonical projection $\tau_{n}^{1,0}:T_{n}^{1}Q\rightarrow Q$ is
defined by $\tau_{n}^{1,0}(J_{0}^{1}\zeta)=\zeta(0)$. In the standard
sense, $T_{n}^{1}Q$ is endowed with the canonical smooth manifold
structure: for any chart $(V,\psi)$, $\psi=(y^{K})$, $1\leq K\leq m+n$,
on $Q$, the pair $(V_{n}^{1},\psi_{n}^{1})$, $\psi_{n}^{1}=(y^{K},y_{j}^{K})$,
is a~chart on $T_{n}^{1}Q$, where $V_{n}^{1}=(\tau_{n}^{1,0})^{-1}(V)$,
$y_{j}^{K}(J_{0}^{1}\zeta)=D_{j}(y^{K}\zeta)(0)$, $1\leq j\leq n$,
and $\mathrm{dim}\,T_{n}^{1}Q=(n+m)(n+1)$. Recall that there is a
canonical identification of the jet space $J^{1}(\mathbf{R}^{n}\times Q)$
and the product $\mathbf{R}^{n}\times T_{n}^{1}Q$,
\begin{equation}
\phi:J^{1}(\mathbf{R}^{n}\times Q)\rightarrow\mathbf{R}^{n}\times T_{n}^{1}Q,\label{eq:Identification}
\end{equation}
defined by $\phi(J_{x}^{1}\gamma)=(x,J_{0}^{1}(\gamma_{0}\circ\mathrm{tr}_{-x}))$,
where $\gamma_{0}$ is the principal part of $\gamma$, and $\mathrm{tr}_{\alpha}:\mathbf{R}^{n}\rightarrow\mathbf{R}^{n}$,
$\mathrm{tr}_{\alpha}(x)=x-\alpha$, is the translation of the Euclidean
space $\mathbf{R}^{n}$. 

Suppose $\zeta:U\rightarrow Q$ be a differentiable mapping defined
on an open set $U\subset\mathbf{R}^{n}$. The $1$-\textit{jet prolongation}
of $\zeta$ is the mapping 
\begin{equation}
U\ni x\rightarrow(T_{n}^{1}\zeta)(x)=J_{0}^{1}(\zeta\circ\mathrm{tr}_{-x})\in T_{n}^{1}Q.\label{eq:JetProlong}
\end{equation}
Note that for any diffeomorphism $\mu:\bar{U}\rightarrow U$ of open
subsets of $\mathbf{R}^{n}$, the prolongation $T_{n}^{1}\zeta$ of
$\zeta$ satisfies
\begin{equation}
T_{n}^{1}(\zeta\circ\mu)(z)=T_{n}^{1}(\zeta)(\mu(z))\circ\mu^{1}(z),\label{eq:Prolongation}
\end{equation}
where $\mu^{1}(z)=J_{0}^{1}(\mathrm{tr}_{\mu(z)}\circ\mu\circ\mathrm{tr}_{-z})$.
Indeed, by the definition of $T_{n}^{1}\zeta$, $T_{n}^{1}(\zeta\circ\mu)(z)=J_{0}^{1}(\zeta\circ\mu\circ\mathrm{tr}_{-z})=J_{0}^{1}(\zeta\circ\mathrm{tr}_{-\mu(z)})\circ J_{0}^{1}(\mathrm{tr}_{\mu(z)}\circ\mu\circ\mathrm{tr}_{-z})=T_{n}^{1}(\zeta)(\mu(z))\circ\mu^{1}(z)$.
The identity \eqref{eq:Prolongation} is used to prove the forthcoming
Theorem \ref{Theorem:Zermelo}.

We restrict our attention to mappings which are \textit{immersions}
(i.e. their tangent mappings are injective). $J_{0}^{1}\zeta\in T_{n}^{1}Q$
is called \textit{regular}, if every representative of $J_{0}^{1}\zeta$
is an immersion at $0\in\mathbf{R}^{n}$. The set $\mathrm{Imm}\,T_{n}^{1}Q$
of regular velocities form an open subset of $T_{n}^{1}Q$, and with
the open submanifold structure $\mathrm{Imm}\,T_{n}^{1}Q$ is called
the \textit{manifold of regular} $n$-\textit{velocities} \textit{of
order} $1$ \textit{over} $Q$. Restricting the coordinates $\psi_{n}^{1}=(y^{K},y_{j}^{K})$,
we get the canonical charts on $\mathrm{Imm}\,T_{n}^{1}Q$, induced
by the canonical atlas of $T_{n}^{1}Q$. 

The manifold of regular velocities $\mathrm{Imm}\,T_{n}^{1}Q$ is
also endowed with another smooth structures. We denote by $(i)=(i_{1},i_{2},\ldots,i_{n})$
an increasing $n$-subsequence of the sequence $(1,2,\ldots,m+n)$,
and by $(\sigma)$ the complementary increasing subsequence. From
the definition of an immersion it follows that for every regular velocity
$J_{0}^{1}\zeta\in V_{n}^{1}$ there exists an $n$-subsequence $(i)$
of $(1,2,\ldots,m+n)$ such that $\mathrm{det}\,y_{j}^{i}(J_{0}^{1}\zeta)=\mathrm{det}\,(D_{j}(y^{i}\zeta)(0))\neq0$,
$i\in(i)$, $1\leq j\leq n$. We set 
\begin{equation}
V_{n}^{1(i)}=\{J_{0}^{1}\zeta\in V_{n}^{1}\,\,|\,\,\mathrm{det}\,(D_{j}(y^{i}\zeta)(0))\neq0\},\label{eq:(i)-neighborhood}
\end{equation}
for every $n$-subsequence $(i)$ of $(1,2,\ldots,m+n)$, where $(V_{n}^{1},\psi_{n}^{1})$
is a chart on $\mathrm{Imm}\,T_{n}^{1}Q$ associated with $(V,\psi)$.
Clearly, $V_{n}^{1(i)}$ is an open subset of $V_{n}^{1}$, and $V_{n}^{1}$
is covered by the sets $V_{n}^{1(i)}$, where $(i)$ runs through
all $n$-subsequences of $(1,2,\ldots,m+n)$. The charts $(V_{n}^{1(i)},\psi_{n}^{1(i)})$,
where $\psi_{n}^{1(i)}$ denotes the canonical coordinates $\psi_{n}^{1}$
on $V_{n}^{1}$ restricted to $V_{n}^{1(i)}$, constitute an atlas
on $\mathrm{Imm}\,T_{n}^{1}Q$ (\textit{\textcolor{black}{finer}}
than the canonical atlas). Another smooth atlas on $\mathrm{Imm}\,T_{n}^{1}Q$,
which arise from the canonical charts, is obtained in the following
way. To this purpose we introduce a set of functions $z_{i}^{k}:V_{n}^{1(i)}\rightarrow\mathbf{R}$,
defined by the formula $z_{i}^{k}y_{j}^{i}=\delta_{j}^{k}$ (the Kronecker
symbol), where $i\in(i)$, $1\leq j,k\leq n$. For every chart $(V_{n}^{1(i)},\psi_{n}^{1(i)})$,
$\psi_{n}^{1(i)}=(y^{i},y^{\sigma},y_{j}^{i},y_{j}^{\sigma})$, on
$\mathrm{Imm}\,T_{n}^{1}Q$ we put
\begin{equation}
w^{i}=y^{i},\quad w^{\sigma}=y^{\sigma},\quad w_{j}^{i}=y_{j}^{i},\quad w_{i}^{\sigma}=z_{i}^{j}y_{j}^{\sigma},\label{eq:AdaptCoord}
\end{equation}
where $i\in(i)$, $\sigma\in(\sigma)$, $1\leq j\leq n$. Formula
\eqref{eq:AdaptCoord} defines charts $(V_{n}^{1(i)},\chi_{n}^{1(i)})$,
$\chi_{n}^{1(i)}=(w^{i},w^{\sigma},w_{j}^{i},w_{i}^{\sigma})$, which
constitute an atlas on $\mathrm{Imm}\,T_{n}^{1}Q$. Note that the
coordinate functions $w_{i}^{\sigma}$ arise as the derivatives $\Delta_{i}w^{\sigma}$
of the base coordinates $w^{\sigma}$, where $\Delta_{i}$, $i\in(i)$,
is the $(i)$-\textit{adapted formal derivative morphism over $Q$,}
expressed by\textit{
\begin{align}
\Delta_{i} & =z_{i}^{j}d_{j}=z_{i}^{j}y_{j}^{K}\frac{\partial}{\partial y^{K}}=z_{i}^{j}y_{j}^{p}\frac{\partial}{\partial y^{p}}+z_{i}^{j}y_{j}^{\sigma}\frac{\partial}{\partial y^{\sigma}}=\frac{\partial}{\partial w^{i}}+w_{i}^{\sigma}\frac{\partial}{\partial w^{\sigma}},\label{eq:AdaptedDerivative}
\end{align}
}(summation runs through $1\leq j\leq n$, $K\in(1,2,\ldots,m+n)$).

A particular subset of regular $n$-velocities of order $1$ whose
both the origin and the target are at the point $0\in\mathbf{R}^{n}$,
$\mathrm{Imm}\,J_{(0,0)}^{1}(\mathbf{R}^{n},\mathbf{R}^{n})$, coincides
with the \textit{\textcolor{black}{general linear group}} $GL_{n}(\mathbf{R})$,
and is endowed with a global chart defined by the coordinate functions
$a_{j}^{i}:L_{n}^{1}\rightarrow\mathbf{R}$, $a_{j}^{i}(J_{0}^{1}\alpha)=D_{j}\alpha^{i}(0)$,
$1\leq i,j\leq n$. Note that for an arbitrary finite $r$, $L_{n}^{r}=\mathrm{Imm}\,J_{(0,0)}^{r}(\mathbf{R}^{n},\mathbf{R}^{n})$
is the $r$-\textit{th differential group} of $\mathbf{R}^{n}$ (known
from the theory of differential invariants), and $L_{n}^{1}=GL_{n}(\mathbf{R})$.
The canonical right action of $GL_{n}(\mathbf{R})$ on $T_{n}^{1}Q$
is defined by means of the jet composition and reduces onto $\mathrm{Imm}\,T_{n}^{1}Q$,
\begin{equation}
\mathrm{Imm}\,T_{n}^{1}Q\times GL_{n}(\mathbf{R})\ni(J_{0}^{1}\zeta,J_{0}^{1}\alpha)\rightarrow J_{0}^{1}\zeta\circ J_{0}^{1}\alpha=J_{0}^{1}(\zeta\circ\alpha)\in\mathrm{Imm}\,T_{n}^{1}Q.\label{eq:akce}
\end{equation}
It is easy to observe that the coordinates $w^{i},\,\,w^{\sigma},\,\,w_{i}^{\sigma}$
\eqref{eq:AdaptCoord} of the chart $(V_{n}^{1(i)},\chi_{n}^{1(i)})$
are $GL_{n}(\mathbf{R})$-invariant. $(V_{n}^{1(i)},\chi_{n}^{1(i)})$
is called the \textit{$(i)$-subordinate chart} to the chart $(V,\psi)$
on $Q$, \textit{\textcolor{black}{adapted}} to the canonical group
action of $GL_{n}(\mathbf{R})$ on $\mathrm{Imm}\,T_{n}^{1}Q$.

Consider the orbit space with respect to the group action \eqref{eq:akce},
\[
G_{n}^{1}Q=\mathrm{Imm}\,T_{n}^{1}Q/GL_{n}(\mathbf{R}),
\]
with its quotient topology. Elements of $G_{n}^{1}Q$ are classes
of regular velocities with respect to the equivalence relation ``\textit{there
exists an element $J_{0}^{1}\alpha\in GL_{n}(\mathbf{R})$ such that}
$J_{0}^{1}\zeta=J_{0}^{1}\chi\circ J_{0}^{1}\alpha$'' on $\mathrm{Imm}\,T_{n}^{1}Q$;
a class $[J_{0}^{1}\zeta]\in G_{n}^{1}Q$ is also called a \textit{contact
element} of order $1$ on $Q$, represented by an immersion $\zeta$.
The \textit{quotient projection} $\mathrm{Imm}\,T_{n}^{1}Q\ni J_{0}^{1}\zeta\rightarrow[J_{0}^{1}\zeta]\in G_{n}^{1}Q$
is denoted by $\kappa_{n}^{1}:\mathrm{Imm}\,T_{n}^{1}Q\rightarrow G_{n}^{1}Q$,
and the canonical projection $\tau_{n,G}^{1,0}:G_{n}^{1}Q\rightarrow Q$
is defined by $\tau_{n,G}^{1,0}([J_{0}^{1}\zeta])=\zeta(0)$.

The following theorem characterizes the structure of $\mathrm{Imm}\,T_{n}^{1}Q$
and $G_{n}^{1}Q$.
\begin{thm}
\label{Theorem:StructureGrassmann}Let $Q$ be Hausdorff.

\textup{\textcolor{black}{(a)}} The canonical action of $GL_{n}(\mathbf{R})$
defines on $\mathrm{Imm}\,T_{n}^{1}Q$ the structure of right principle
$GL_{n}(\mathbf{R})$-bundle with base $G_{n}^{1}Q$ and type fibre
$\mathrm{Imm}\,J_{(0,0)}^{r}(\mathbf{R}^{n},\mathbf{R}^{m+n})$.

\textup{\textcolor{black}{(b)}} The orbit space $G_{n}^{1}Q$ has
a unique smooth structure such that the canonical quotient projection
$\kappa_{n}^{1}$ is a submersion.

\textup{\textcolor{black}{(c)}} The orbit manifold $G_{n}^{1}Q$ has
the structure of a fibration with base $Q$, projection $\tau_{n,G}^{1,0},$
and type fibre $G_{n,n+m}^{1}=\mathrm{Imm}\,J_{(0,0)}^{r}(\mathbf{R}^{n},\mathbf{R}^{m+n})/GL_{n}(\mathbf{R})$.
The dimension of $G_{n}^{1}Q$ equals $\dim G_{n}^{1}Q=m(n+1)+n$.
\end{thm}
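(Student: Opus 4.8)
The plan is to establish (a) by exhibiting, in suitable charts, \emph{equivariant local trivializations} of the right action \eqref{eq:akce}, which makes $\kappa_n^1$ a principal $GL_n(\mathbf{R})$-bundle; parts (b) and (c) then follow formally, together with the product identification \eqref{eq:Identification}. First I would record the action in coordinates: from $J_0^1\zeta\cdot J_0^1\alpha=J_0^1(\zeta\circ\alpha)$ and the chain rule, in any canonical chart $(V_n^1,\psi_n^1)$ one has $y^K(J_0^1(\zeta\circ\alpha))=y^K(J_0^1\zeta)$ and $y_j^K(J_0^1(\zeta\circ\alpha))=y_l^K(J_0^1\zeta)\,a_j^l(J_0^1\alpha)$; thus the action is right matrix multiplication on the velocity block $(y_j^K)$, and the action map $\mathrm{Imm}\,T_n^1Q\times GL_n(\mathbf{R})\to\mathrm{Imm}\,T_n^1Q$ is smooth. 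It is free: if $y_l^K a_j^l=y_j^K$ for all $K,j$, choosing an $n$-subsequence $(i)$ with $(y_j^i)$ regular forces $(a_j^l)=\mathrm{id}$.

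The core of the proof is the construction of local trivializations over the domains $V_n^{1(i)}$ of \eqref{eq:(i)-neighborhood}. Here I would use the $(i)$-subordinate adapted chart $\chi_n^{1(i)}=(w^i,w^\sigma,w_j^i,w_i^\sigma)$ of \eqref{eq:AdaptCoord}, together with the observation (made after \eqref{eq:akce}) that $w^i,w^\sigma,w_i^\sigma$ are $GL_n(\mathbf{R})$-invariant while $w_j^i=y_j^i$ carries the action. The map
\[
V_n^{1(i)}\ni J_0^1\zeta\longmapsto\big(\kappa_n^1(J_0^1\zeta),\,(y_j^i(J_0^1\zeta))\big)\in\kappa_n^1\big(V_n^{1(i)}\big)\times GL_n(\mathbf{R})
\]
is an equivariant bijection whose inverse sends a point with invariant coordinates $(w^i,w^\sigma,w_i^\sigma)$ and a matrix $(a_j^i)\in GL_n(\mathbf{R})$ to the regular velocity with $y^i=w^i$, $y^\sigma=w^\sigma$, $y_j^i=a_j^i$, $y_j^\sigma=a_j^i w_i^\sigma$, the last equality being the inversion of $w_i^\sigma=z_i^j y_j^\sigma$ (so that $z_i^j$ is the matrix inverse of $a_j^i$). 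Checking that this is a diffeomorphism and that the trivializations over overlapping $V_n^{1(i)}$ differ by the $GL_n(\mathbf{R})$-action yields (a); in particular the $GL_n(\mathbf{R})$-action is proper. The fibre of $\kappa_n^1$ is a single orbit, diffeomorphic to $GL_n(\mathbf{R})$, while $\tau_n^{1,0}\colon\mathrm{Imm}\,T_n^1Q\to Q$ has type fibre $\mathrm{Imm}\,J_{(0,0)}^1(\mathbf{R}^n,\mathbf{R}^{m+n})$.

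For (b), the sets $\kappa_n^1(V_n^{1(i)})$ with coordinates induced from $(w^i,w^\sigma,w_i^\sigma)$ form an atlas on $G_n^1Q$: the transition maps are smooth because the transition formulas among the invariant coordinates on $\mathrm{Imm}\,T_n^1Q$ involve none of the group coordinates $y_j^i$ and hence descend, and in these charts $\kappa_n^1$ is the projection onto the invariant coordinates, so it is a submersion. Uniqueness is standard: any two smooth structures for which $\kappa_n^1$ is a submersion admit local sections, forcing $\mathrm{id}_{G_n^1Q}$ to be a diffeomorphism between them. For (c), $\tau_{n,G}^{1,0}\circ\kappa_n^1=\tau_n^{1,0}$ makes $\tau_{n,G}^{1,0}$ smooth; over a chart domain $V\subset Q$ the identification $J^1(\mathbf{R}^n\times Q)\cong\mathbf{R}^n\times T_n^1Q$ combined with the trivializations above gives $(\tau_{n,G}^{1,0})^{-1}(V)\cong V\times G_{n,n+m}^1$, so $\tau_{n,G}^{1,0}$ is a fibration with the stated type fibre, and $\dim G_n^1Q=\dim\mathrm{Imm}\,T_n^1Q-\dim GL_n(\mathbf{R})=(n+m)(n+1)-n^2=m(n+1)+n$. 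The one genuinely nontrivial point is the core step — verifying that the adapted coordinates \eqref{eq:AdaptCoord} consistently separate invariant from group directions and that the trivializations over different $V_n^{1(i)}$ glue; once this is in hand, properness of the noncompact group action and parts (b), (c) are routine.
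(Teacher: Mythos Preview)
Your argument is essentially correct and follows the standard route via the adapted charts \eqref{eq:AdaptCoord}; there is nothing to compare against, because the paper does not prove this theorem at all---its ``proof'' is a one-line citation to Grigore and Krupka \cite{Grigore}. What you have written is precisely the construction one would expect from that reference, and it dovetails with the preparatory material the paper has already laid out in Section~\ref{sec:3} (the open cover by the sets $V_n^{1(i)}$, the $GL_n(\mathbf{R})$-invariance of $w^i,w^\sigma,w_i^\sigma$, and the associated charts \eqref{eq:GrassmannChart} on $G_n^1Q$).

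One small point worth tightening: you derive properness of the action as a consequence of the equivariant trivializations, which is fine, but you should then make explicit that properness (together with $Q$ Hausdorff, hence $\mathrm{Imm}\,T_n^1Q$ Hausdorff) yields a Hausdorff quotient $G_n^1Q$; otherwise the phrase ``smooth structure'' in (b) is not fully justified. Also, the remark about the identification \eqref{eq:Identification} in your treatment of (c) is not really needed---the local triviality of $\tau_{n,G}^{1,0}$ over a chart domain $V\subset Q$ follows directly from the fact that $(\tau_n^{1,0})^{-1}(V)=V_n^1$ is already a product $V\times\mathrm{Imm}\,J_{(0,0)}^1(\mathbf{R}^n,\mathbf{R}^{m+n})$ in the canonical chart, and this product structure is $GL_n(\mathbf{R})$-equivariant, so it descends to the quotient.
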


\begin{proof}
See Grigore and Krupka \cite{Grigore}.
\end{proof}
$G_{n}^{1}Q$ together with the smooth and fibration structure described
by Theorem \ref{Theorem:StructureGrassmann} is called the \textit{Grassmann
fibration of order} $1$ over the manifold $Q$. A smooth structure
on $G_{n}^{1}Q$ is described as follows. Let $(V_{n}^{1(i)},\chi_{n}^{1(i)})$,
$\chi_{n}^{1(i)}=(w^{i},w^{\sigma},w_{j}^{i},w_{i}^{\sigma})$, be
an $(i)$-subordinate chart on $\mathrm{Imm}\,T_{n}^{1}Q$ to the
chart $(V,\psi)$ on $Q$. Denote $\tilde{V}_{n}^{1(i)}=\kappa_{n}^{1}(V_{n}^{1(i)})$,
and $\tilde{\chi}_{n}^{1(i)}=(\tilde{w}^{i},\tilde{w}^{\sigma},\tilde{w}_{i}^{\sigma})$,
where 
\begin{equation}
\tilde{w}^{i}([J_{0}^{1}\zeta])=w^{i}(J_{0}^{1}\zeta),\quad\tilde{w}^{\sigma}([J_{0}^{1}\zeta])=w^{\sigma}(J_{0}^{1}\zeta),\quad\tilde{w}_{i}^{\sigma}([J_{0}^{1}\zeta])=w_{i}^{\sigma}(J_{0}^{1}\zeta).\label{eq:GrassmannChart}
\end{equation}
The chart $(\tilde{V}_{n}^{1(i)},\tilde{\chi}_{n}^{1(i)})$ on $G_{n}^{1}Q$
is the \textit{\textcolor{black}{associated}} chart with $(V_{n}^{1(i)},\chi_{n}^{1(i)})$.
Usually we do not distinguish between coordinates on $\mathrm{Imm}\,T_{n}^{1}Q$
and $G_{n}^{1}Q$, when no misunderstanding may arise.

Let $\zeta:U\rightarrow Q$ be an immersion on an open subset $U$
of $\mathbf{R}^{n}$ with values in $Q$. Recall that the 1-jet prolongation
of $\zeta$, the curve $T_{n}^{1}\zeta:U\rightarrow T_{n}^{1}Q$ is
defined by \eqref{eq:JetProlong}. An immersion $G_{n}^{1}\zeta:U\rightarrow G_{n}^{1}Q$,
given by
\begin{equation}
(G_{n}^{1}\zeta)(x)=[(T_{n}^{1}\zeta)(x)],\label{eq:GrassmannProlong}
\end{equation}
is called the \textit{Grassmann prolongation} of $\zeta$. If $\mu:\bar{U}\rightarrow U$
is a diffeomorphism of open subsets of $\mathbf{R}^{n}$, then the
mapping \eqref{eq:GrassmannProlong} satisfies
\[
G_{n}^{1}(\zeta\circ\mu)=(G_{n}^{1}\zeta)\circ\mu.
\]
Indeed, using the property \eqref{eq:Prolongation}, for any $z\in\bar{U}$
we have
\begin{align*}
G_{n}^{1}(\zeta\circ\mu)(z) & =[T_{n}^{1}(\zeta\circ\mu)(z)]=[T_{n}^{1}(\zeta)(\mu(z))\circ\mu^{1}(z)]\\
 & =[T_{n}^{1}(\zeta)(\mu(z))]=G_{n}^{1}(\zeta)(\mu(z)).
\end{align*}

An arbitrary diffeomorphism $\alpha:W\rightarrow Q$ (onto its image),
defined on an open subset $W\subset Q$, can be prolonged on the Grassmann
fibration $G_{n}^{1}Q$ as follows. We define a diffeomorphism of
$G_{n}^{1}Q$, $G_{n}^{1}\alpha:W^{1}\rightarrow G_{n}^{1}Q$, where
$\tilde{W}^{1}=(\tau_{n,G}^{1,0})^{-1}(W)$, by putting
\begin{equation}
G_{n}^{1}\alpha([J_{0}^{1}\zeta])=[J_{0}^{1}(\alpha\circ\zeta)]\label{eq:GrassmannProlongationDiffeo}
\end{equation}
for every $J_{0}^{1}\zeta\in\mathrm{Imm}\,T_{n}^{1}Q$ such that $\alpha$
is composable with $\zeta$. $G_{n}^{1}\alpha$ is called the \textit{Grassmann
prolongation} of $\alpha$. Note that combining these concepts, the
prolongation of an immersion and the prolongation of a diffeomorphism,
it is easy to see that $G_{n}^{1}(\alpha\circ\zeta)=G_{n}^{1}\alpha\circ G_{n}^{1}\zeta$;
for every $x\in U$,
\begin{equation}
G_{n}^{1}(\alpha\circ\zeta)(x)=[J_{0}^{1}(\alpha\circ\zeta\circ\mathrm{tr}_{-x})]=G_{n}^{1}\alpha([J_{0}^{1}(\zeta\circ\mathrm{tr}_{-x})])=G_{n}^{1}\alpha\circ G_{n}^{1}\zeta(x).\label{eq:ProlongationProperty}
\end{equation}

Consider a vector field $\Xi$ defined on $W\subset Q$, and the local
one-parameter group $\alpha_{t}^{\Xi}$ of $\Xi$. $G_{n}^{1}\alpha_{t}^{\Xi}$
is the Grassmann prolongation of $\alpha_{t}^{\Xi}$ \eqref{eq:GrassmannProlongationDiffeo}.
We define
\begin{equation}
G_{n}^{1}\Xi([J_{0}^{1}\zeta])=\left(\frac{d}{dt}G_{n}^{1}\alpha_{t}^{\Xi}([J_{0}^{1}\zeta])\right)_{0},\label{eq:GrassmannProlongationField}
\end{equation}
for every element $[J_{0}^{1}\zeta]\in\tilde{W}^{1}\subset G_{n}^{1}Q$.
Formula \eqref{eq:GrassmannProlongationField} defines a vector field
$G^{1}\Xi$ on an open subset $W^{1}$, called the \textit{Grassmann
prolongation} of vector field $\Xi$. 
\begin{lem}
\label{Lem:VectorFieldProlong}If a vector field $\Xi$ on $W\subset Q$
has an expression
\[
\Xi=\Xi^{K}\frac{\partial}{\partial y^{K}}
\]
 with respect to a chart $(V,\psi)$, $\psi=(y^{K})$, such that $V\subset W$,
then the Grassmann prolongation $G_{n}^{1}\Xi$ is expressed with
respect to the $(i)$-subordinate chart $(\tilde{V}_{n}^{1(i)},\tilde{\chi}_{n}^{1(i)})$
\eqref{eq:GrassmannChart} as 
\[
G_{n}^{1}\Xi=\Xi^{i}\frac{\partial}{\partial w^{i}}+\Xi^{\sigma}\frac{\partial}{\partial w^{\sigma}}+\Xi_{i}^{\sigma}\frac{\partial}{\partial w_{i}^{\sigma}},
\]
where $\Xi^{K}=\Xi^{K}(w^{i},w^{\sigma})$, $\Xi_{i}^{\sigma}=\Xi_{i}^{\sigma}(w^{i},w^{\sigma},w_{i}^{\sigma})$,
and 
\begin{align}
\Xi_{i}^{\sigma} & =\Delta_{i}\Xi^{\sigma}-w_{p}^{\sigma}\Delta_{i}\Xi^{p}.\label{eq:ProlongFieldExp}
\end{align}
\end{lem}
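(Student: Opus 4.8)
The plan is to realize the flow of $G_n^1\Xi$ as the $\kappa_n^1$-projection of a one-parameter group on $\mathrm{Imm}\,T_n^1Q$ whose generator can be written down directly, and then to pass to the $(i)$-subordinate coordinates. Let $\alpha_t^\Xi$ be the local one-parameter group of $\Xi$. For $|t|$ small, define diffeomorphisms $\beta_t$ of an open subset of $\mathrm{Imm}\,T_n^1Q$ by $\beta_t(J_0^1\zeta)=J_0^1(\alpha_t^\Xi\circ\zeta)$; these form a local one-parameter group, because $\alpha_t^\Xi\circ\alpha_s^\Xi=\alpha_{t+s}^\Xi$ and because the composite of a diffeomorphism with an immersion is again an immersion. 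By the definition \eqref{eq:GrassmannProlongationDiffeo} of the Grassmann prolongation of a diffeomorphism, $\kappa_n^1\circ\beta_t=G_n^1\alpha_t^\Xi\circ\kappa_n^1$; differentiating this identity at $t=0$ and using \eqref{eq:GrassmannProlongationField} shows that the generator $\hat\Xi=(d\beta_t/dt)_0$ of $\beta_t$ is $\kappa_n^1$-related to $G_n^1\Xi$, i.e. $T\kappa_n^1\cdot\hat\Xi=G_n^1\Xi\circ\kappa_n^1$. It therefore suffices to compute $\hat\Xi$ and to verify that it is $\kappa_n^1$-projectable.

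First I would write $\hat\Xi$ in a canonical chart $(V_n^1,\psi_n^1)$, $\psi_n^1=(y^K,y_j^K)$. From $y^K(\beta_t(J_0^1\zeta))=(y^K\circ\alpha_t^\Xi)(\zeta(0))$ and $y_j^K(\beta_t(J_0^1\zeta))=(\partial(y^K\circ\alpha_t^\Xi)/\partial y^L)(\zeta(0))\,y_j^L(J_0^1\zeta)$, differentiating at $t=0$ gives $\hat\Xi=\Xi^K\,\partial/\partial y^K+(d_j\Xi^K)\,\partial/\partial y_j^K$, where $d_j\Xi^K=y_j^L\,\partial\Xi^K/\partial y^L$ is the formal derivative over $Q$ appearing in \eqref{eq:AdaptedDerivative}. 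Next I would change to the $(i)$-subordinate chart $(V_n^{1(i)},\chi_n^{1(i)})$, $\chi_n^{1(i)}=(w^i,w^\sigma,w_j^i,w_i^\sigma)$, via \eqref{eq:AdaptCoord}. The components along $w^i$ and $w^\sigma$ are immediate, $\hat\Xi(w^i)=\Xi^i$ and $\hat\Xi(w^\sigma)=\Xi^\sigma$, and since $\Xi$ is a vector field on $Q$ these are functions of $(w^i,w^\sigma)=(y^i,y^\sigma)$ only; the component along $w_j^i$ is $d_j\Xi^i$ and lies in the $\kappa_n^1$-vertical directions $\partial/\partial w_j^i$ spanning the differential-group orbits.

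For the remaining component one writes $w_i^\sigma=z_i^jy_j^\sigma$ and differentiates the defining relation $z_i^ky_j^i=\delta_j^k$ along $\hat\Xi$; using that $z$ is inverse to the velocity matrix $(y_j^i)$, this yields $\hat\Xi(z_i^k)=-z_i^j\,(d_j\Xi^l)\,z_l^k$, and hence, with $\Delta_i=z_i^jd_j$ and $z_l^jy_j^\sigma=w_l^\sigma$,
\[
\hat\Xi(w_i^\sigma)=\hat\Xi(z_i^j)\,y_j^\sigma+z_i^j\,\hat\Xi(y_j^\sigma)=\Delta_i\Xi^\sigma-w_p^\sigma\,\Delta_i\Xi^p,
\]
which, because $\Delta_i\Xi^K=\partial\Xi^K/\partial w^i+w_i^\sigma\,\partial\Xi^K/\partial w^\sigma$, depends only on $(w^i,w^\sigma,w_i^\sigma)$. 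Since the functions $w^i,w^\sigma,w_i^\sigma$ are precisely the $\kappa_n^1$-pullbacks of the coordinates $\tilde w^i,\tilde w^\sigma,\tilde w_i^\sigma$ of \eqref{eq:GrassmannChart}, this computation shows that $\hat\Xi$ is $\kappa_n^1$-projectable and that its projection is obtained by deleting the $\partial/\partial w_j^i$-term and replacing $w$ by $\tilde w$; combined with the $\kappa_n^1$-relatedness established first, this gives the asserted expression for $G_n^1\Xi$ with $\Xi_i^\sigma=\Delta_i\Xi^\sigma-w_p^\sigma\Delta_i\Xi^p$. The main obstacle is the bookkeeping in the last step — differentiating the inverse-matrix relation $z_i^ky_j^i=\delta_j^k$ and checking that the contributions from the differential-group directions $\partial/\partial w_j^i$ cancel under $\kappa_n^1$; the rest is a routine unwinding of the definitions of $\beta_t$ and of the adapted coordinates.
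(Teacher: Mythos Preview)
Your proof is correct and follows essentially the same approach as the paper: both compute the $w_i^\sigma$-component by differentiating $w_i^\sigma=z_i^jy_j^\sigma$ along the flow $J_0^1\zeta\mapsto J_0^1(\alpha_t^\Xi\circ\zeta)$ and invoking the derivative of the inverse-matrix relation for $z_i^j$. The only organizational difference is that you first name the lifted vector field $\hat\Xi$ on $\mathrm{Imm}\,T_n^1Q$ (which is exactly the jet prolongation $J^1\Xi$ of the paper's subsequent remark) and then project, whereas the paper writes the same derivative directly in the Grassmann coordinates without introducing $\hat\Xi$ explicitly.
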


\begin{proof}
By the definition of $G_{n}^{1}\Xi$ \eqref{eq:GrassmannProlongationField},
\[
\Xi_{i}^{\sigma}([J_{0}^{1}\zeta])=\left(\frac{d}{dt}w_{i}^{\sigma}\circ G_{n}^{1}\alpha_{t}^{\Xi}([J_{0}^{1}\zeta])\right)_{0},
\]
where we differentiate the function $t\rightarrow w_{i}^{\sigma}\circ G_{n}^{1}\alpha_{t}^{\Xi}([J_{0}^{1}\zeta])=\Delta_{i}w^{\sigma}(J_{0}^{1}(\alpha_{t}^{\Xi}\circ\zeta))$
at $t=0$, where $\Delta_{i}=z_{i}^{j}d_{j}$ is the $(i)$-adapted
formal derivative morphism \eqref{eq:AdaptedDerivative}\textit{.}
Using the chart transformation \eqref{eq:AdaptCoord}, a straightforward
calculation at a point $J_{0}^{1}\zeta$ now gives
\begin{align*}
 & \Xi_{i}^{\sigma}([J_{0}^{1}\zeta])=\left(\frac{d}{dt}z_{i}^{j}(J_{0}^{1}(\alpha_{t}^{\Xi}\circ\zeta))\right)_{0}d_{j}w^{\sigma}+z_{i}^{j}\left(\frac{d}{dt}d_{j}w^{\sigma}(J_{0}^{1}(\alpha_{t}^{\Xi}\circ\zeta))\right)_{0}\\
 & =\frac{\partial z_{i}^{j}}{\partial y_{k}^{p}}\left(\frac{d}{dt}D_{k}(y^{p}\alpha_{t}^{\Xi}\psi^{-1}\circ\psi\zeta))\right)_{0}y_{j}^{\sigma}+z_{i}^{j}\left(\frac{d}{dt}D_{j}(y^{\sigma}\alpha_{t}^{\Xi}\psi^{-1}\circ\psi\zeta)(0))\right)_{0}\\
 & =-z_{p}^{j}z_{i}^{k}y_{j}^{\sigma}y_{k}^{L}\left(\frac{d}{dt}D_{L}(y^{p}\alpha_{t}^{\Xi}\psi^{-1})(\psi\zeta(0))\right)_{0}+z_{i}^{j}y_{j}^{L}\left(\frac{d}{dt}D_{L}(y^{\sigma}\alpha_{t}^{\Xi}\psi^{-1})(\psi\zeta(0))\right)_{0}\\
 & =-\delta_{i}^{s}w_{p}^{\sigma}\left(\frac{\partial\Xi^{p}}{\partial y^{s}}\right)_{\zeta(0)}-w_{i}^{\nu}w_{p}^{\sigma}\left(\frac{\partial\Xi^{p}}{\partial y^{\nu}}\right)_{\zeta(0)}+\delta_{i}^{s}\left(\frac{\partial\Xi^{\sigma}}{\partial y^{s}}\right)_{\zeta(0)}+w_{i}^{\nu}\left(\frac{\partial\Xi^{\sigma}}{\partial y^{\nu}}\right)_{\zeta(0)}\\
 & =\left(\frac{\partial\Xi^{\sigma}}{\partial y^{i}}\right)_{\zeta(0)}+w_{i}^{\nu}\left(\frac{\partial\Xi^{\sigma}}{\partial y^{\nu}}\right)_{\zeta(0)}-w_{p}^{\sigma}\left(\frac{\partial\Xi^{p}}{\partial y^{i}}\right)_{\zeta(0)}-w_{p}^{\sigma}w_{i}^{\nu}\left(\frac{\partial\Xi^{p}}{\partial y^{\nu}}\right)_{\zeta(0)}\\
 & =\Delta_{i}\Xi^{\sigma}-w_{p}^{\sigma}\Delta_{i}\Xi^{p},
\end{align*}
proving \eqref{eq:ProlongFieldExp}.
\end{proof}
\begin{rem}
Lemma \ref{Lem:VectorFieldProlong} is a modification of the standard
concept of a jet prolongation of a projectable vector field, defined
on a fibered manifold. If $\Xi$ is a vector field on $T_{n}^{1}Q$
(embedded in $\mathbf{R}^{n}\times T_{n}^{1}Q$), expressed by $\Xi=\Xi^{K}(\partial/\partial y^{K})$,
then the jet prolongation $J^{1}\Xi$, resp. $J^{2}\Xi$, of $\Xi$
has the expression
\[
J^{1}\Xi=\Xi^{K}\frac{\partial}{\partial y^{K}}+\Xi_{j}^{K}\frac{\partial}{\partial y_{j}^{K}},\;\textrm{\;resp.}\;\;J^{2}\Xi=\Xi^{K}\frac{\partial}{\partial y^{K}}+\Xi_{j}^{K}\frac{\partial}{\partial y_{j}^{K}}+\Xi_{jl}^{K}\frac{\partial}{\partial y_{jl}^{K}},
\]
where $\Xi_{j}^{K}=d_{j}\Xi^{K}$, $\Xi_{jl}^{K}=d_{l}\Xi_{j}^{K}$.
\end{rem}

A differential form $\eta$ on $\tilde{W}^{1}\subset G_{n}^{1}Q$
is called \textit{\textcolor{black}{contact}}, if $(G_{n}^{1}\zeta)^{*}\eta=0$
for all immersions $\zeta:U\rightarrow W$, defined on an open subset
$U$ of $\mathbf{R}^{n}$. If $(V,\psi)$, $\psi=(y^{K})$, is a chart
on $Q$ such that $V\subset W$, and $(\tilde{V}_{n}^{1(i)},\tilde{\chi}_{n}^{1(i)})$,
$\tilde{\chi}_{n}^{1(i)}=(w^{i},w^{\sigma},w_{i}^{\sigma})$, is the
$(i)$-subordinate chart on $G_{n}^{1}Q$, then the $1$-forms $dw^{i}$,
$\tilde{\omega}^{\sigma}$, $dw_{i}^{\sigma}$, where 
\begin{equation}
\tilde{\omega}^{\sigma}=dw^{\sigma}-w_{i}^{\sigma}dw^{i}\label{eq:OmegaGrassmann}
\end{equation}
(sum through $i\in(i)$), constitute a basis of linear forms on $\tilde{V}_{n}^{1(i)}$.
Differential forms, which are locally generated by contact $1$-forms
$\tilde{\omega}^{\sigma}$ and $2$-forms $d\tilde{\omega}^{\sigma}$,
constitute the \textit{\textcolor{black}{contact ideal}} $\tilde{\varTheta}^{1}W$
in the exterior algebra of differential forms on $\tilde{W}^{1}$.
Using the definition of charts on $G_{n}^{1}Q$, and the canonical
embedding of $\mathrm{Imm}\,T_{n}^{1}Q$ into $J^{1}(\mathbf{R}^{n}\times Q)$,
we note that $\tilde{\omega}^{\sigma}$ can be expressed as a linear
combination of contact $1$-forms $\omega^{K}=dy^{K}-y_{l}^{K}dx^{l}$
\eqref{eq:omega} on $J^{1}(\mathbf{R}^{n}\times Q)$,
\begin{align*}
(\pi^{2,1})^{*}\tilde{\omega}^{\sigma} & =(\pi^{2,1})^{*}(dy^{\sigma}-z_{i}^{j}y_{j}^{\sigma}dy^{i})\\
 & =\omega^{\sigma}+y_{l}^{\sigma}dx^{l}-z_{i}^{j}y_{j}^{\sigma}(\omega^{i}+y_{l}^{i}dx^{l})=\omega^{\sigma}-z_{i}^{j}y_{j}^{\sigma}\omega^{i}.
\end{align*}

\section{The fundamental Lepage equivalent of a homogeneous Lagrangian \label{sec:3-1}}

First we recall the notion of a positive homogeneous function and
study its properties. This is defined as an equivariance with respect
to the canonical right action of the identity component $GL_{n}^{+}(\mathbf{R})$
of the general linear group $GL_{n}(\mathbf{R})$. We say that a real-valued
function $F:\mathrm{Imm}\,T_{n}^{1}Q\rightarrow\mathbf{R}$ is \textit{positive
homogeneous}, if 
\[
F(J_{0}^{1}\zeta\circ J_{0}^{1}\alpha)=\mathrm{det}(a_{j}^{i}(J_{0}^{1}\alpha))F(J_{0}^{1}\zeta)
\]
for all $J_{0}^{1}\zeta\in\mathrm{Imm}\,T_{n}^{1}Q$ and all $J_{0}^{1}\alpha\in GL_{n}^{+}(\mathbf{R})$.
Note that the elements of $GL_{n}(\mathbf{R})$ used in this definition
are represented by \textit{orientation-preserving} diffeomorphisms.

Let $U$ be an open subset of $\mathbf{R}^{n}$, and let $\zeta:U\rightarrow Q$
be an immersion. Any compact subset $S$ of $U$ associates with a
function $F:\mathrm{Imm}\,T_{n}^{1}Q\rightarrow\mathbf{R}$ the integral

\begin{equation}
F_{S}(\zeta)=\int_{S}(F\circ T_{n}^{1}\zeta)\omega_{0}.\label{eq:Integral}
\end{equation}

The following theorem is a criterion of parameter-invariance of this
integral.
\begin{thm}
\textbf{\textup{(Euler\textendash Zermelo)}} \label{Theorem:Zermelo}Let
$F:\mathrm{Imm}\,T_{n}^{1}Q\rightarrow\mathbf{R}$ be a differentiable
function. The following conditions are equivalent: 

\textup{(a)} $F$ is positive homogeneous. 

\textup{(b)} The integral \eqref{eq:Integral} is parameter-invariant,
i.e. if $\zeta:U\rightarrow Q$ is an immersion, and $\mu:\bar{U}\rightarrow U$
is a diffeomorphism of open subsets in $\mathbf{R}^{n}$ such that
$\det D\mu>0$ on $\bar{U}$, then for any two compact subsets $\bar{U}_{0}\subset\bar{U}$,
$U_{0}\subset U$, such that $\mu(\bar{U}_{0})=U_{0}$, the integral
\eqref{eq:Integral} satisfies $F_{U_{0}}(\zeta)=F_{\bar{U}_{0}}(\zeta\circ\mu)$.

\textup{(c)} For any chart $(V,\psi)$, $\psi=(y^{K})$, on $Q$,

\begin{equation}
\frac{\partial F}{\partial y_{j}^{K}}y_{l}^{K}=\delta_{l}^{j}F,\quad j,l=1,2,\ldots,n.\label{eq:Zermelo}
\end{equation}
\end{thm}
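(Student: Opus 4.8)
The plan is to prove the cycle of implications (a) $\Rightarrow$ (b) $\Rightarrow$ (c) $\Rightarrow$ (a), with the heart of the argument being the infinitesimal reformulation in (c), which packages the classical Zermelo conditions.

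For (a) $\Rightarrow$ (b), I would start from the definition of the integral \eqref{eq:Integral} and use the prolongation identity \eqref{eq:Prolongation}, namely $T_n^1(\zeta\circ\mu)(z)=T_n^1(\zeta)(\mu(z))\circ\mu^1(z)$, where $\mu^1(z)=J_0^1(\mathrm{tr}_{\mu(z)}\circ\mu\circ\mathrm{tr}_{-z})\in GL_n^+(\mathbf{R})$ because $\det D\mu>0$. Applying positive homogeneity of $F$ along this element gives $(F\circ T_n^1(\zeta\circ\mu))(z)=\det\bigl(a_j^i(\mu^1(z))\bigr)\,(F\circ T_n^1\zeta)(\mu(z))$. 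Since $a_j^i(\mu^1(z))=D_j\mu^i(z)$, the factor $\det(a_j^i(\mu^1(z)))$ is precisely the Jacobian $\det D\mu(z)$, so $F_{\bar U_0}(\zeta\circ\mu)=\int_{\bar U_0}(F\circ T_n^1\zeta)(\mu(z))\,\det D\mu(z)\,\omega_0(z)$, and the classical change-of-variables formula for the integral over $\mathbf{R}^n$ (using $\mu(\bar U_0)=U_0$ and $\det D\mu>0$) turns this into $\int_{U_0}(F\circ T_n^1\zeta)\,\omega_0=F_{U_0}(\zeta)$.

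For (b) $\Rightarrow$ (c), I would localize: fix a chart $(V,\psi)$ on $Q$, an immersion $\zeta$ with image in $V$, and test parameter-invariance against a one-parameter family of diffeomorphisms $\mu_t:\bar U\to U$ with $\mu_0=\mathrm{id}$, generated by a compactly supported vector field $\xi=\xi^j\,\partial/\partial x^j$ on $\mathbf{R}^n$. Differentiating the identity $F_{U_0}(\zeta)=F_{\bar U_0}(\zeta\circ\mu_t)$ at $t=0$ kills the left side and produces, on the right, an integral of the form $\int\bigl((\partial F/\partial y_K^{\,})\, y_j^K\,\partial_l\xi^l + \partial_j(\cdots)\bigr)$; after integrating by parts and using that $\xi$ has arbitrary compactly supported components, the vanishing of the integrand for all such $\xi$ forces $(\partial F/\partial y_j^K)\,y_l^K = \delta_l^j\,F$ pointwise along every prolonged immersion. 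Since every regular velocity is realized as $T_n^1\zeta(x)$ for some immersion $\zeta$, this yields \eqref{eq:Zermelo} identically on $\mathrm{Imm}\,T_n^1Q$.

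For (c) $\Rightarrow$ (a), I would integrate the infinitesimal conditions. Fix $J_0^1\zeta$ and consider the curve $t\mapsto g(t)\in GL_n^+(\mathbf{R})$ with $g(0)=\mathrm{id}$ and tangent an arbitrary element $B=(B_j^i)$ of the Lie algebra $\mathfrak{gl}_n(\mathbf{R})$; set $\Phi(t)=F(J_0^1\zeta\circ g(t))$ and $\Psi(t)=\det(g(t))\,F(J_0^1\zeta)$. In the coordinates $y_j^K$, right composition with $g(t)$ acts by $y_j^K\mapsto y_k^K g_j^k(t)$, so $\Phi'(0)=(\partial F/\partial y_j^K)\,y_k^K\,B_j^k$, while $\Psi'(0)=\mathrm{tr}(B)\,F$; the conditions \eqref{eq:Zermelo} give $(\partial F/\partial y_j^K)y_k^K=\delta_k^j F$, hence $\Phi'(0)=\delta_k^j B_j^k F=\mathrm{tr}(B)F=\Psi'(0)$. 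Because $GL_n^+(\mathbf{R})$ is connected, this equality of derivatives at the identity, transported along the group by the cocycle property of the action, shows $\Phi\equiv\Psi$, i.e. $F(J_0^1\zeta\circ J_0^1\alpha)=\det(a_j^i(J_0^1\alpha))F(J_0^1\zeta)$ for all $J_0^1\alpha\in GL_n^+(\mathbf{R})$, which is positive homogeneity.

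The step I expect to be the main obstacle is (b) $\Rightarrow$ (c): one must be careful with the variation of the \emph{domain} of integration (not just the integrand), correctly account for the Jacobian factor $\det D\mu_t$ to first order, and argue that the boundary terms vanish and that the resulting pointwise identity along arbitrary immersions propagates to an identity on all of $\mathrm{Imm}\,T_n^1Q$. The implications (a) $\Rightarrow$ (b) and (c) $\Rightarrow$ (a) are essentially the change-of-variables theorem and a connectedness/ODE-uniqueness argument, respectively, and should be routine.
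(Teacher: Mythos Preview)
The paper does not supply its own proof here: it cites Urban and Krupka for the case $n=1$ and asserts that the general case proceeds along the same lines. Your cycle (a) $\Rightarrow$ (b) $\Rightarrow$ (c) $\Rightarrow$ (a) is therefore not comparable to anything explicit in the paper, but it is a standard and essentially correct route; your (a) $\Rightarrow$ (b) via the prolongation identity and change of variables, and (c) $\Rightarrow$ (a) via connectedness of $GL_n^+(\mathbf{R})$, are fine as written.

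The one place that needs tightening is (b) $\Rightarrow$ (c). If you fix $U_0$ and vary only the compactly supported generator $\xi$, then after stripping off the divergence term the remaining condition is $\int (D_j\xi^l)\,G_l^{\,j}\,\omega_0=0$ with $G_l^{\,j}=(\partial F/\partial y_j^K)\,y_l^K-\delta_l^{\,j}F$; one further integration by parts yields only $D_jG_l^{\,j}=0$ along $T_n^1\zeta$, not $G_l^{\,j}=0$, because the $n^2$ functions $D_j\xi^l$ are not independent test functions. The missing ingredient is that (b) holds for \emph{every} compact $U_0$: shrinking $U_0$ first promotes the integral identity to a pointwise one, and then at a fixed point the matrix $(D_j\xi^l)$ can be prescribed arbitrarily (take $\xi$ affine near that point), which forces $G_l^{\,j}=0$ there. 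An even quicker alternative is to apply (b) with $\mu$ a fixed \emph{linear} map $A\in GL_n^+(\mathbf{R})$ and vary $U_0$: this yields (a) directly, after which (c) follows by differentiating the homogeneity relation at the identity, bypassing the flow argument you flagged as the obstacle.
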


\begin{proof}
The proof for $n=1$ can be found in Urban and Krupka \cite{UK-Debrecen};
for arbitrary positive integer $n$ the proof proceeds along the same
lines.
\end{proof}
\begin{rem}
The identities \eqref{eq:Zermelo} are the well-known \textit{Zermelo
conditions} (cf. McKiernan \cite{McKiernan}). Differentiating \eqref{eq:Zermelo}
with respect to $y_{j}^{K}$, we obtain the following formulas for
partial derivatives of a positive homogeneous function $F$. Namely,
for an integer $k\geq1$ we have
\begin{align}
 & \frac{\partial^{k}F}{\partial y_{j_{1}}^{K_{1}}\partial y_{j_{2}}^{K_{2}}\ldots\partial y_{j_{k}}^{K_{k}}}y_{i_{1}}^{K_{1}}=\frac{\partial^{k-1}F}{\partial y_{j_{2}}^{K_{2}}\partial y_{j_{3}}^{K_{3}}\ldots\partial y_{j_{k}}^{K_{k}}}\delta_{i_{1}}^{j_{1}}-\frac{\partial^{k-1}F}{\partial y_{j_{1}}^{K_{2}}\partial y_{j_{3}}^{K_{3}}\ldots\partial y_{j_{k}}^{K_{k}}}\delta_{i_{1}}^{j_{2}}\label{eq:ZermeloDifferentiating}\\
 & \quad\quad\quad\quad\quad\quad-\frac{\partial^{k-1}F}{\partial y_{j_{2}}^{K_{2}}\partial y_{j_{1}}^{K_{3}}\partial y_{j_{4}}^{K_{4}}\ldots\partial y_{j_{k}}^{K_{k}}}\delta_{i_{1}}^{j_{3}}-\ldots-\frac{\partial^{k-1}F}{\partial y_{j_{2}}^{K_{2}}\partial y_{j_{3}}^{K_{3}}\ldots\partial y_{j_{1}}^{K_{k}}}\delta_{i_{1}}^{j_{k}}.\nonumber 
\end{align}
\end{rem}

\begin{rem}
\label{rem:G-Zermelo}On the chart neighborhood $V_{n}^{1(i)}$ \eqref{eq:(i)-neighborhood},
the Zermelo conditions \eqref{eq:Zermelo} for the function $\tilde{F}=F\circ(\psi_{n}^{1(i)})^{-1}\circ\chi_{n}^{1(i)}$
reads
\[
\frac{\partial\tilde{F}}{\partial w_{j}^{i}}w_{k}^{i}=\delta_{k}^{j}\tilde{F},
\]
where summation runs through $i\in(i)$. It is easy to see that this
equation can be integrated and its solution is of the form 
\begin{equation}
\tilde{F}(w^{i},w^{\sigma},w_{j}^{i},w_{i}^{\sigma})=\mathrm{det}(w_{j}^{i})\tilde{F}_{G}(w^{i},w^{\sigma},w_{i}^{\sigma}),\label{eq:G-function}
\end{equation}
where $\tilde{F}_{G}$ is a uniquely given differentiable function
on $\tilde{V}_{n}^{1(i)}\subset G_{n}^{1}Q$. We call $\tilde{F}_{G}$
the \textit{Grassmann projection} of the (positive homogeneous) function
$F$, associated to $(\tilde{V}_{n}^{1(i)},\tilde{\chi}_{n}^{1(i)})$.
\end{rem}

\begin{lem}
\label{Lemma:vlastnosti-homog}Let $F:\mathrm{Imm}\,T_{n}^{1}Q\rightarrow\mathbf{R}$
be a positive homogeneous function. Then for every integer $k$, $0\leq k\leq n-1$,
and every integer $l$ such that, $k<l\leq n$, 

\begin{align}
\frac{\partial^{l}F}{\partial y_{j_{1}}^{K_{1}}\ldots\partial y_{j_{l}}^{K_{l}}}y_{i_{k+1}}^{K_{k+1}}\ldots y_{i_{l}}^{K_{l}}\varepsilon_{j_{1}\ldots j_{l}i_{l+1}\ldots i_{n}} & =\frac{l!}{k!}\frac{\partial^{k}F}{\partial y_{j_{1}}^{K_{1}}\ldots\partial y_{j_{k}}^{K_{k}}}\varepsilon_{j_{1}\ldots j_{k}i_{k+1}\ldots i_{n}}.\label{eq:DerivativesHomogeneous}
\end{align}
\end{lem}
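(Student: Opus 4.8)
The statement \eqref{eq:DerivativesHomogeneous} is an iterated consequence of the Zermelo conditions, and the natural strategy is induction on the ``contraction depth'' $l-k$, using the differentiated Zermelo identities \eqref{eq:ZermeloDifferentiating} as the induction step. I would fix $l$ and prove the family of identities \eqref{eq:DerivativesHomogeneous} by downward induction on $k$, from $k=l-1$ down to $k=0$, or equivalently by upward induction on the number $l-k$ of contracted velocity variables. The base case $k=l-1$ is precisely the basic Zermelo condition \eqref{eq:Zermelo} differentiated $l-1$ times, i.e. the $k=1$ instance of \eqref{eq:ZermeloDifferentiating} applied to the $(l-1)$-th order partial derivative of $F$; contracting one factor $y^{K_l}_{i_l}$ against $\partial^l F/\partial y^{K_1}_{j_1}\cdots\partial y^{K_l}_{j_l}$ yields $\partial^{l-1}F/\partial y^{K_1}_{j_1}\cdots\partial y^{K_{l-1}}_{j_{l-1}}$ times $\delta^{j_l}_{i_l}$ plus terms in which $j_l$ replaces one of $j_1,\dots,j_{l-1}$ in the lower-order derivative, i.e. terms symmetric in a pair of the $j$-indices; after contracting with $\varepsilon_{j_1\ldots j_l i_{l+1}\ldots i_n}$ all such exchange terms die by skew-symmetry, and the surviving term, again by skew-symmetry of $\varepsilon$ in the slot now occupied by $\delta^{j_l}_{i_l}$, reproduces the right-hand side with the correct combinatorial factor $l/(l-1)\cdot\frac{(l-1)!}{(l-1)!}=\frac{l!}{(l-1)!}$.

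For the induction step, assume \eqref{eq:DerivativesHomogeneous} holds for some $k$ with $0<k\le l-1$ and derive it for $k-1$. One applies the differentiated Zermelo identity \eqref{eq:ZermeloDifferentiating} (in its single-contraction form, $\partial^p F/\partial y^{L_1}_{m_1}\cdots\partial y^{L_p}_{m_p}\cdot y^{L_1}_{r_1}=\partial^{p-1}F/(\ldots)\,\delta^{m_1}_{r_1}-(\text{index-exchange terms})$) to contract one further velocity factor $y^{K_k}_{i_k}$ against the $l$-th order derivative appearing on the left of the $k$-level identity. The leading term produces $\frac{\partial^{k-1}F}{\partial y^{K_1}_{j_1}\cdots\partial y^{K_{k-1}}_{j_{k-1}}}$ (contracted appropriately with the remaining velocities and $\varepsilon$), and the index-exchange terms all produce derivatives in which $j_k$ occupies a slot symmetric with one of $j_1,\dots,j_{k-1}$, or where $i_k$ is antisymmetrized against the remaining $\varepsilon$-indices in a way that collapses; every one of these spurious terms vanishes upon contraction with $\varepsilon_{j_1\ldots j_l i_{l+1}\ldots i_n}$ by skew-symmetry. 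Tracking the factorials: passing from the $k$-level constant $\frac{l!}{k!}$ to the $(k-1)$-level requires an extra factor $k$ coming from the number of ways the antisymmetrization distributes the freed index, giving $\frac{l!}{k!}\cdot k = \frac{l!}{(k-1)!}$, as required.

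The main obstacle — the only place where real care is needed — is the bookkeeping of the index-exchange (``non-leading'') terms in \eqref{eq:ZermeloDifferentiating} and verifying that each of them is annihilated after contraction with the Levi-Civita symbol. There are two flavours to check: terms where the freed differentiation index $j_k$ (or $j_l$) is moved onto a $K$-label already paired with some $j_s$, making the coefficient symmetric in $(j_k,j_s)$, which is killed by $\varepsilon_{j_1\ldots j_l\ldots}$; and terms carrying $\delta^{j_s}_{i_k}$ with $s\le l$, which after renaming force an $\varepsilon$-index equal to a contracted velocity index $i_k$ that is already being summed — here one must check the symmetry of the surrounding velocity product $y^{K_{k+1}}_{i_{k+1}}\cdots y^{K_l}_{i_l}$ under exchange of the relevant lower indices to conclude the total contribution is symmetric in two $\varepsilon$-slots and hence zero. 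A clean way to organize this is to observe that, modulo the contact ideal / modulo terms symmetric in a pair of $j$'s or a pair of $i$'s, contracting a velocity into a derivative of $F$ simply lowers the differentiation order and produces a Kronecker delta; once this ``reduction modulo symmetric junk'' principle is isolated as a sublemma, the induction is immediate and the factorial count is the only remaining computation. I would also remark that \eqref{eq:DerivativesHomogeneous} can alternatively be read off directly from the homogeneous structure \eqref{eq:G-function}: writing $F=\det(y^i_j)\,F_G$ on $V^{1(i)}_n$ and differentiating, the $\varepsilon$-contracted derivatives of $F$ are expressed through cofactor expansions of the determinant, but the inductive argument above avoids choosing a splitting $(i)$ and is therefore preferable for a coordinate-free statement.
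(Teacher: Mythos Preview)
Your inductive framework is the same as the paper's: contract one velocity factor at a time using the differentiated Zermelo identity \eqref{eq:ZermeloDifferentiating}, with the Levi--Civita symbol controlling the combinatorics. However, the key step is mis-identified, and the error propagates through both your base case and your induction step.

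You claim that after contraction with $\varepsilon_{j_1\ldots j_l i_{l+1}\ldots i_n}$ the ``exchange'' terms of \eqref{eq:ZermeloDifferentiating} vanish by skew-symmetry, and that the factor $l$ (respectively $k$ in the induction step) then arises from the leading term alone via ``the number of ways the antisymmetrization distributes the freed index''. This is not correct. The $s$-th exchange term, for $1\le s\le l-1$, is
\[
-\delta^{j_s}_{i_l}\,\frac{\partial^{l-1}F}{\partial y^{K_1}_{j_1}\cdots\partial y^{K_s}_{j_l}\cdots\partial y^{K_{l-1}}_{j_{l-1}}}\,\varepsilon_{j_1\ldots j_l i_{l+1}\ldots i_n},
\]
which is \emph{not} symmetric in any pair of the $j$-indices and does not vanish. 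Rather, summing the Kronecker delta, transposing the $s$-th and $l$-th slots of $\varepsilon$ (sign $-1$), and renaming the dummy index $j_l\to j_s$, one sees that each exchange term is \emph{equal} to the leading term
\[
\frac{\partial^{l-1}F}{\partial y^{K_1}_{j_1}\cdots\partial y^{K_{l-1}}_{j_{l-1}}}\,\varepsilon_{j_1\ldots j_{l-1} i_l i_{l+1}\ldots i_n}.
\]
The full contraction therefore produces $l$ identical copies, and that is where the factor $l$ comes from. Iterating, one picks up $l,\,l-1,\,\ldots,\,k+1$, giving $l!/k!$, exactly as in the paper's computation. Your factorial count happens to land on the right answer, but the supporting reason (``spurious terms vanish'') would, if taken literally, give factor $1$ at each step; and the expression $l/(l-1)\cdot(l-1)!/(l-1)!$ you write in the base case does not evaluate to $l$. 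Once you replace ``the exchange terms vanish'' with ``each exchange term coincides with the leading term after a transposition in $\varepsilon$'', the argument becomes correct and is then identical to the paper's.
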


\begin{proof}
Formula \eqref{eq:DerivativesHomogeneous} follows from a direct application
of \eqref{eq:ZermeloDifferentiating} in $(l-k)$ steps, and the skew-symmetry
property of the Levi-Civita symbol. Indeed, we have

\begin{align*}
 & \frac{\partial^{l}F}{\partial y_{j_{1}}^{K_{1}}\ldots y_{j_{k}}^{K_{k}}y_{j_{k+1}}^{K_{k+1}}\ldots\partial y_{j_{l}}^{K_{l}}}y_{i_{k+1}}^{K_{k+1}}\ldots y_{i_{l}}^{K_{l}}\varepsilon_{j_{1}\ldots j_{l}i_{l+1}\ldots i_{n}}\\
 & =l\frac{\partial^{l-1}F}{\partial y_{j_{1}}^{K_{1}}\ldots y_{j_{k}}^{K_{k}}y_{j_{k+1}}^{K_{k+1}}\ldots\partial y_{j_{l-1}}^{K_{l-1}}}y_{i_{k+1}}^{K_{k+1}}\ldots y_{i_{l-1}}^{K_{l-1}}\varepsilon_{j_{1}\ldots j_{l-1}i_{l}\ldots i_{n}}\\
 & =l(l-1)\frac{\partial^{l-2}F}{\partial y_{j_{1}}^{K_{1}}\ldots y_{j_{k}}^{K_{k}}y_{j_{k+1}}^{K_{k+1}}\ldots\partial y_{j_{l-2}}^{K_{l-2}}}y_{i_{k+1}}^{K_{k+1}}\ldots y_{i_{l-2}}^{K_{l-2}}\varepsilon_{j_{1}\ldots j_{l-2}i_{l-1}i_{l}\ldots i_{n}}\\
 & =\ldots=l(l-1)(l-2)\ldots(k+2)(k+1)\frac{\partial^{k}F}{\partial y_{j_{1}}^{K_{1}}\ldots\partial y_{j_{k}}^{K_{k}}}\varepsilon_{j_{1}\ldots j_{k}i_{k+1}\ldots i_{n}},
\end{align*}
as required.
\end{proof}
In the following theorem we study the fundamental Lepage equivalent
$Z_{\lambda}$ \eqref{eq:Fundamental} of a~first-order Lagrangian
$\lambda\in\Omega_{n,\mathbf{R}^{n}}^{1}(\mathbf{R}^{n}\times Q)$,
$\lambda=\mathscr{L}\omega_{0}$, whose Lagrange function $\mathscr{L}$,
defined on the manifold of regular velocities $\mathrm{Imm}\,T_{n}^{1}Q$,
is positive-homogeneous. We derive its local structure and show, in
particular, that the resulting $n$-form is defined on the Grassmann
fibration $G_{n}^{1}Q$.
\begin{thm}
\label{Theorem:LepageHilbert}Let $\lambda\in\Omega_{n,\mathbf{R}^{n}}^{1}(\mathbf{R}^{n}\times Q)$
be a Lagrangian of order $1$, expressed by $\lambda=\mathscr{L}\omega_{0}$,
where $\mathscr{L}:\mathrm{Imm}\,T_{n}^{1}Q\rightarrow\mathbf{R}$
is the Lagrange function. If $\mathscr{L}$ satisfies the Zermelo
condition \eqref{eq:Zermelo}, then the fundamental Lepage equivalent
\eqref{eq:Fundamental} is defined on the $n$-Grassmann fibration
$G_{n}^{1}Q$, and has the expression
\begin{equation}
W_{\lambda}=\frac{1}{(n!)^{2}}\frac{\partial^{n}\mathscr{L}}{\partial y_{j_{1}}^{K_{1}}\partial y_{j_{2}}^{K_{2}}\ldots\partial y_{j_{n}}^{K_{n}}}\varepsilon_{j_{1}j_{2}\ldots j_{n}}dy^{K_{1}}\land dy^{K_{2}}\wedge\ldots\wedge dy^{K_{n}}.\label{eq:FundLepEqZermelo}
\end{equation}
\end{thm}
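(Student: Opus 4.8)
The plan is to rewrite the chart expression \eqref{eq:Fundamental} for $Z_{\lambda}$ in the basis $dy^{K_{1}}\wedge\ldots\wedge dy^{K_{k}}\wedge dx^{i_{k+1}}\wedge\ldots\wedge dx^{i_{n}}$ by means of Lemma \ref{Lemma-koef}, and then to use the Zermelo conditions \eqref{eq:Zermelo}, in the form of Lemma \ref{Lemma:vlastnosti-homog}, to show that every ``mixed'' contact coefficient of degree $k<n$ vanishes identically; only the purely $dy$ top-degree term will remain, and it is exactly $W_{\lambda}$. That $W_{\lambda}$ lives on $G_{n}^{1}Q$ is then a structural observation.

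Concretely, I would set $q=n$ in Lemma \ref{Lemma-koef} and read off from \eqref{eq:Fundamental} the coefficients $B_{K_{1}\ldots K_{k}i_{k+1}\ldots i_{n}}=\frac{1}{k!}\frac{\partial^{k}\mathscr{L}}{\partial y_{j_{1}}^{K_{1}}\ldots\partial y_{j_{k}}^{K_{k}}}\varepsilon_{j_{1}\ldots j_{k}i_{k+1}\ldots i_{n}}$, which are skew in $K_{1},\ldots,K_{k}$ (relabel the dummy indices $j_{1},\ldots,j_{k}$) and in $i_{k+1},\ldots,i_{n}$, so that the lemma applies and yields $Z_{\lambda}$ in the $dy$-basis with coefficients $A$ given by \eqref{eq:koeficienty-aux2}. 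The heart of the proof is the evaluation of $A_{K_{1}\ldots K_{k}i_{k+1}\ldots i_{n}}$: in the sum \eqref{eq:koeficienty-aux2} the term $l=k$ is $B_{K_{1}\ldots K_{k}i_{k+1}\ldots i_{n}}$ itself, whereas for $l>k$ each summand contains the contraction $\frac{\partial^{l}\mathscr{L}}{\partial y_{j_{1}}^{K_{1}}\ldots\partial y_{j_{l}}^{K_{l}}}y_{i_{k+1}}^{K_{k+1}}\ldots y_{i_{l}}^{K_{l}}\varepsilon_{j_{1}\ldots j_{l}i_{l+1}\ldots i_{n}}$, which Lemma \ref{Lemma:vlastnosti-homog} rewrites as $\frac{l!}{k!}\frac{\partial^{k}\mathscr{L}}{\partial y_{j_{1}}^{K_{1}}\ldots\partial y_{j_{k}}^{K_{k}}}\varepsilon_{j_{1}\ldots j_{k}i_{k+1}\ldots i_{n}}$ (the operator $\mathrm{Alt}$ being inert because $\varepsilon$ is already alternating in $i_{k+1},\ldots,i_{n}$). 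After factoring out this common tensor, $A_{K_{1}\ldots K_{k}i_{k+1}\ldots i_{n}}$ equals it times $\sum_{l=k}^{n}(-1)^{l-k}\binom{n-k}{n-l}=\sum_{p=0}^{n-k}(-1)^{p}\binom{n-k}{p}=(1-1)^{n-k}$, which is $0$ for $k<n$ and $1$ for $k=n$. Hence all the mixed terms disappear, $Z_{\lambda}$ collapses to its $k=n$ term $\frac{1}{(n!)^{2}}\frac{\partial^{n}\mathscr{L}}{\partial y_{j_{1}}^{K_{1}}\ldots\partial y_{j_{n}}^{K_{n}}}\varepsilon_{j_{1}\ldots j_{n}}\,dy^{K_{1}}\wedge\ldots\wedge dy^{K_{n}}$, and $Z_{\lambda}=W_{\lambda}$.

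For the last assertion, observe that $W_{\lambda}$ carries no factor $dx^{i}$ and that its coefficient depends only on the velocity variables $(y^{K},y_{j}^{K})$, so it descends from $\mathbf{R}^{n}\times\mathrm{Imm}\,T_{n}^{1}Q$ to an $n$-form on $\mathrm{Imm}\,T_{n}^{1}Q$; it then remains to verify $\kappa_{n}^{1}$-projectability. Horizontality is immediate, since the fundamental vector fields of the action \eqref{eq:akce} have the form $y_{\ell}^{K}E_{j}^{\ell}\,\partial/\partial y_{j}^{K}$ and are annihilated by every $dy^{K}$; invariance of the coefficient follows by differentiating $n$ times the homogeneity identity $\mathscr{L}\circ\Phi_{\alpha}=\det(a)\,\mathscr{L}$ (with $\Phi_{\alpha}(J_{0}^{1}\zeta)=J_{0}^{1}(\zeta\circ\alpha)$), since each lower $y$-index of $\partial^{n}\mathscr{L}/\partial y_{j_{1}}^{K_{1}}\ldots\partial y_{j_{n}}^{K_{n}}$ then picks up a factor $a^{-1}$ while the overall weight stays $\det(a)$, and on contracting with $\varepsilon_{j_{1}\ldots j_{n}}$ the two cancel. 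Equivalently, in an $(i)$-subordinate chart one uses $\mathscr{L}=\det(w_{j}^{i})\tilde{\mathscr{L}}_{G}$ (Remark \ref{rem:G-Zermelo}) and checks that the coefficient of $W_{\lambda}$ together with $dy^{i}=dw^{i}$ and $dy^{\sigma}=dw^{\sigma}$ are expressed through the Grassmann coordinates $(w^{i},w^{\sigma},w_{i}^{\sigma})$ alone; hence $W_{\lambda}=(\kappa_{n}^{1})^{*}\bar{W}_{\lambda}$ for a unique $n$-form $\bar{W}_{\lambda}$ on $G_{n}^{1}Q$.

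I expect the main obstacle to be the bookkeeping in the middle step: running Lemmas \ref{Lemma-koef} and \ref{Lemma:vlastnosti-homog} in tandem so that the binomial cancellation $(1-1)^{n-k}$ emerges cleanly, with careful attention to the $\mathrm{Alt}$ operators, the several normalizing constants, and the boundary term $l=k$. In the descent the only delicate point is the distinction between $GL_{n}(\mathbf{R})$ and its identity component $GL_{n}^{+}(\mathbf{R})$ in the definition of positive homogeneity, which is handled by passing to the $(i)$-subordinate charts, where the Grassmann coordinates are $GL_{n}(\mathbf{R})$-invariant by construction.
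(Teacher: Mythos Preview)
Your proposal is correct and follows essentially the same route as the paper's own proof: rewrite $Z_{\lambda}$ in the $dy$-basis via Lemma \ref{Lemma-koef}, apply Lemma \ref{Lemma:vlastnosti-homog} to each summand, and invoke the binomial identity $\sum_{l=k}^{n}(-1)^{l-k}\binom{n-k}{n-l}=0$ for $k<n$ to kill the mixed terms, leaving only $W_{\lambda}$. For the descent to $G_{n}^{1}Q$ the paper performs exactly the $(i)$-subordinate chart computation you sketch as your second alternative; your additional group-theoretic argument via horizontality and invariance is a legitimate variant, but be mindful that positive homogeneity only gives equivariance under $GL_{n}^{+}(\mathbf{R})$, so the chart route (where the Grassmann coordinates are $GL_{n}(\mathbf{R})$-invariant by construction) is the cleaner way to finish.
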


\begin{proof}
Suppose $\mathscr{L}$ satisfies the Zermelo condition \eqref{eq:Zermelo}
hence also identities \eqref{eq:DerivativesHomogeneous}. By Lemma
\ref{Lemma-koef}, the fundamental Lepage equivalent \eqref{eq:Fundamental}
can be expressed as

\begin{align}
Z_{\lambda} & =\sum_{k=0}^{n}\frac{1}{(n-k)!}\frac{1}{k!}Z_{K_{1}\ldots K_{k}i_{k+1}\ldots i_{n}}dy^{K_{1}}\wedge\ldots\wedge dy^{K_{k}}\wedge dx^{i_{k+1}}\wedge\ldots\wedge dx^{i_{n}},\label{eq:FundLepEq-2}
\end{align}
where
\begin{align}
Z_{K_{1}\ldots K_{k}i_{k+1}\ldots i_{n}} & =\sum_{l=k}^{n}(-1)^{l-k}\left(_{n-l}^{n-k}\right)\frac{1}{l!}\frac{\partial^{l}\mathscr{L}}{\partial y_{j_{1}}^{K_{1}}\ldots\partial y_{j_{l}}^{K_{l}}}\varepsilon_{j_{1}\ldots j_{l}i_{l+1}\ldots i_{n}}\label{eq:FundLepEqCoef}\\
 & \qquad\cdot y_{i_{k+1}}^{K_{k+1}}\ldots y_{i_{l}}^{K_{l}}\,\mathrm{Alt}(i_{k+1},\:\ldots,\:i_{n}).\nonumber 
\end{align}
We claim that $Z_{\lambda}$ coincides with $W_{\lambda}$ \eqref{eq:FundLepEqZermelo}.
Clearly, the term of $Z_{\lambda}$ containing the exterior product
$dy^{K_{1}}\wedge\ldots\wedge dy^{K_{n}}$ reads
\begin{align*}
 & \frac{1}{n!}Z_{K_{1}\ldots K_{n}}dy^{K_{1}}\wedge\ldots\wedge dy^{K_{n}}=\frac{1}{n!}\left(\frac{1}{n!}\frac{\partial^{n}\mathscr{L}}{\partial y_{j_{1}}^{K_{1}}\ldots\partial y_{j_{n}}^{K_{n}}}\varepsilon_{j_{1}\ldots j_{n}}\right)dy^{K_{1}}\wedge\ldots\wedge dy^{K_{n}},
\end{align*}
which coincides with \eqref{eq:FundLepEqZermelo}. Let us show that
all terms of $Z_{\lambda}$ \eqref{eq:FundLepEq-2}, which contain
exactly $k$ exterior factors $dy^{K}$ vanish whenever $0\leq k\leq n-1$.
Indeed, consider the term of $Z_{\lambda}$ containing the exterior
product $dy^{K_{1}}\wedge\ldots\wedge dy^{K_{k}}$ for fixed $k$,
$0\leq k\leq n-1$, whose coefficients are given by \eqref{eq:FundLepEqCoef}.
Applying Lemma \ref{Lemma:vlastnosti-homog}, we obtain

\begin{align*}
Z_{K_{1}\ldots K_{k}i_{k+1}\ldots i_{n}} & =\sum_{l=k}^{n}(-1)^{l-k}\left(_{n-l}^{n-k}\right)\frac{1}{l!}\frac{l!}{k!}\frac{\partial^{k}\mathscr{L}}{\partial y_{j_{1}}^{K_{1}}\partial y_{j_{2}}^{K_{2}}\ldots\partial y_{j_{k}}^{K_{k}}}\varepsilon_{j_{1}\ldots j_{k}i_{k+1}\ldots i_{n}}\\
 & =\frac{1}{k!}\frac{\partial^{k}\mathscr{L}}{\partial y_{j_{1}}^{K_{1}}\partial y_{j_{2}}^{K_{2}}\ldots\partial y_{j_{k}}^{K_{k}}}\varepsilon_{j_{1}\ldots j_{k}i_{k+1}\ldots i_{n}}\sum_{l=k}^{n}(-1)^{l-k}\left(_{n-l}^{n-k}\right)=0,
\end{align*}
as required.\textcolor{black}{{} }

\textcolor{black}{It remains to show that }$W_{\lambda}$ \eqref{eq:FundLepEqZermelo}\textcolor{black}{{}
is defined on }$G_{n}^{1}Q$.\textcolor{black}{{} This is, however,
a consequence of positive homogeneity of the Lagrange function }$\mathscr{L}$\textcolor{black}{{}
and properties of the coordinate transformation }$\psi_{n}^{1(i)}\circ(\chi_{n}^{1(i)})^{-1}$\textcolor{black}{.
Indeed, expressing the $n$-form \eqref{eq:FundLepEqZermelo} in terms
of the} $GL_{n}(\mathbf{R})$\textcolor{black}{-adapted }$(i)$\textcolor{black}{-subordinate
chart }$(V_{n}^{1(i)},\chi_{n}^{1(i)})$ on $\mathrm{Imm}\,T_{n}^{1}Q$,
arbitrary term of $W_{\lambda}$, up to a coefficient, can be written
as
\begin{align*}
 & \frac{\partial^{n-l}\mathscr{\tilde{L}}}{\partial w_{i_{l+1}}^{\sigma_{l+1}}\ldots\partial w_{i_{k}}^{\sigma_{k}}\partial w_{i_{k+1}}^{\sigma_{k+1}}\ldots\partial w_{i_{n}}^{\sigma_{n}}}\left|\begin{array}{cccc}
z_{i_{1}}^{j_{1}} & z_{i_{2}}^{j_{1}} &  & z_{i_{n}}^{j_{1}}\\
z_{i_{1}}^{j_{2}} & z_{i_{2}}^{j_{2}} &  & z_{i_{n}}^{j_{2}}\\
 &  & \ddots\\
z_{i_{1}}^{j_{n}} & z_{i_{2}}^{j_{n}} &  & z_{i_{n}}^{j_{n}}
\end{array}\right|\\
 & w_{p_{l+1}}^{\sigma_{l+1}}\ldots w_{p_{k}}^{\sigma_{k}}\delta_{p_{1}}^{i_{1}}\ldots\delta_{p_{l}}^{i_{l}}\varepsilon_{j_{1}j_{2}\ldots j_{n}}dw^{p_{1}}\land\ldots\wedge dw^{p_{k}}\land\tilde{\omega}^{\sigma_{k+1}}\land\ldots\wedge\tilde{\omega}^{\sigma_{n}},
\end{align*}
for some integers $k$, $l$ such that $0\leq k\leq n$, $0\leq l\leq k$,
where $\mathscr{\tilde{L}}=\mathscr{L}\circ(\psi_{n}^{1(i)})^{-1}\circ\chi_{n}^{1(i)}$,
and $\tilde{\omega}^{\sigma}=dw^{\sigma}-w_{i}^{\sigma}dw^{i}$ \eqref{eq:OmegaGrassmann}
are contact 1-forms on $\tilde{V}_{n}^{1(i)}$ (cf. Urban and Krupka
\cite{UK-Acta}). Since the determinant $\mathrm{det}(z_{i}^{j})$,
where $i\in(i),$ $1\leq j\leq n$, does not depend on the coordinates
$w_{i}^{\sigma}$, we obtain by Remark \ref{rem:G-Zermelo}, \eqref{eq:G-function},
that the function $\mathscr{\tilde{L}}\mathrm{det}(z_{i}^{j})$ coincides
with the Grassmann projection $\tilde{\mathscr{L}}_{G}$ of $\mathscr{L}$.
Hence $W_{\lambda}$ is defined on the neighborhood $\tilde{V}_{n}^{1(i)}\subset G_{n}^{1}Q$
for every chart $(V,\psi)$ on $Q$. This completes the proof.
\end{proof}
\begin{rem}
\label{Remark:Hilbert-Caratheodory}(\textit{\textcolor{black}{Hilbert-Carath\'eodory
form}}) Analogously, Crampin and Saunders \cite{CramSaun1} studied
the Carath\'eodory form \eqref{eq:CaratheodoryForm} for a positive-homogeneous
non-vanishing Lagrange function, resulting into the \textcolor{black}{Hilbert-Carath\'eodory
form, expressed by}
\begin{equation}
\Lambda_{\lambda}=\frac{1}{n!}\frac{1}{\mathscr{L}^{n-1}}\frac{\partial\mathscr{L}}{\partial y_{j_{1}}^{K_{1}}}\frac{\partial\mathscr{L}}{\partial y_{j_{2}}^{K_{2}}}\ldots\frac{\partial\mathscr{L}}{\partial y_{j_{n}}^{K_{n}}}\varepsilon_{j_{1}j_{2}\ldots j_{n}}dy^{K_{1}}\land dy^{K_{2}}\wedge\ldots\wedge dy^{K_{n}}.\label{eq:Hilbert-CaratheodoryForm}
\end{equation}
We point out that $\Lambda_{\lambda}$ \eqref{eq:Hilbert-CaratheodoryForm}
is \textit{\textcolor{black}{different}} from the $n$-form $W_{\lambda}$
\eqref{eq:FundLepEqZermelo}: a simple example for $n=2$ is the Lagrangian
$\lambda=\mathscr{L}\omega_{0}$ with positive-homogeneous function
$\mathscr{L}=A_{PQ}y_{1}^{P}y_{2}^{Q}$, where functions $A_{PQ}=A_{PQ}(y^{K})$
are skew-symmetric; see also Crampin and Saunders \cite{CramSaun3}.
On the other hand, these two Lepage equivalents are closely related
as they coincide for the Lagrangian of minimal submanifolds problem
(see Section \ref{sec:6}).
\end{rem}

\section{First-order variational field theory for submanifolds \label{sec:5}}

\subsection{Lepage forms on $\mathrm{Imm}\,T_{n}^{1}Q$ and the Euler\textendash Lagrange
form\label{sec:5-1}}

Based on the canonical identification of $J^{1}(\mathbf{R}^{n}\times Q)$
and $\mathbf{R}^{n}\times T_{n}^{1}Q$ (see \eqref{eq:Identification}),
we say that an $n$-form $\rho$ on $\mathrm{Imm}\,T_{n}^{1}Q$ is
a \textit{Lepage form}, if $\rho$ is a Lepage form on $J^{1}(\mathbf{R}^{n}\times Q)$,
i.e. if $hi_{\xi}d\rho=0$ for an arbitrary $\pi^{1,0}$-vertical
vector field $\xi$ on $(\pi^{1,0})^{-1}(W)$, where $W\subset\mathbf{R}^{n}\times Q$
is an open set, and $\pi^{1,0}:J^{1}(\mathbf{R}^{n}\times Q)\rightarrow\mathbf{R}^{n}\times Q$
is the canonical jet projection. 

Restrict ourselves to $\pi^{1,0}$-\textit{horizontal} $n$-forms
on $\mathrm{Imm}\,T_{n}^{1}Q$, expressed in any chart $(V,\psi)$
on $Q$ by
\begin{equation}
\rho=\frac{1}{n!}A_{K_{1}K_{2}\ldots K_{n}}dy^{K_{1}}\land dy^{K_{2}}\wedge\ldots\wedge dy^{K_{n}},\label{eq:horizontalForm}
\end{equation}
where the coefficients $A_{K_{1}K_{2}\ldots K_{n}}$ are functions
on $\mathrm{Imm}\,T_{n}^{1}Q$. Using the canonical decomposition
formula $\rho=h\rho+\sum_{k=1}^{n}p_{k}\rho$, we get
\begin{equation}
h\rho=\mathscr{L}\omega_{0},\label{eq:LocalLagrangian}
\end{equation}
where
\begin{equation}
\mathscr{L}=\frac{1}{n!}A_{K_{1}\ldots K_{n}}y_{j_{1}}^{K_{1}}\ldots y_{j_{n}}^{K_{n}}\varepsilon^{j_{1}\ldots j_{n}}\label{eq:LagrangianLepage}
\end{equation}
is a function on $\mathrm{Imm}\,T_{n}^{1}Q$, and
\begin{equation}
p_{k}\rho=\frac{1}{k!(n-k)!}B_{K_{1}\ldots K_{k}j_{k+1}\ldots j_{n}}\omega^{K_{1}}\wedge\ldots\wedge\omega^{K_{k}}\wedge dx^{j_{k+1}}\wedge\ldots\wedge dx^{j_{n}},\label{eq:k-contact-component}
\end{equation}
where
\[
B_{K_{1}\ldots K_{k}j_{k+1}\ldots j_{n}}=A_{K_{1}\ldots K_{n}}y_{j_{k+1}}^{K_{k+1}}\ldots y_{j_{n}}^{K_{n}}\quad\mathrm{Alt}(j_{k+1},\:\ldots,\:j_{n}).
\]
The $n$-form $h\rho$ \eqref{eq:LocalLagrangian} is said to be the
\textit{Lagrangian}, and its component $\mathscr{L}$ \eqref{eq:LagrangianLepage}
the\textit{ Lagrange function}, associated to $\rho$. If $\rho$
is a Lepage form, $\rho$ is also said to be a\textit{~Lepage equivalent}
of the Lagrangian $\lambda=h\rho$.

Now we give a characterization of Lepage forms on $\mathrm{Imm}\,T_{n}^{1}Q$.
\begin{thm}
\label{Theorem:LepCond}Let $(V,\psi)$ be a chart on $Q$. Suppose
$\rho$ is a $\pi^{1,0}$-horizontal $n$-form on $\mathrm{Imm}\,T_{n}^{1}Q$,
expressed by \eqref{eq:horizontalForm}. Then $\rho$ is a Lepage
form if and only if
\begin{align}
\frac{\partial A_{K_{1}K_{2}\ldots K_{n}}}{\partial y_{s}^{P}}y_{j_{1}}^{K_{1}}y_{j_{2}}^{K_{2}}\ldots y_{j_{n}}^{K_{n}} & \varepsilon^{j_{1}j_{2}\ldots j_{n}}=0.\label{eq:LepageCondition}
\end{align}
\end{thm}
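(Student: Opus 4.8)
The plan is to compute $h\,i_{\xi}d\rho$ directly in the fibered chart induced by $(V,\psi)$ for an arbitrary $\pi^{1,0}$-vertical vector field $\xi$, and then read off for which $\rho$ it vanishes identically. Recall that, by the definition adopted for forms on $\mathrm{Imm}\,T_{n}^{1}Q$, the $n$-form $\rho$ is a Lepage form precisely when $h\,i_{\xi}d\rho=0$ for every such $\xi$.

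First I would exterior-differentiate \eqref{eq:horizontalForm}. Via the identification \eqref{eq:Identification}, the coefficients $A_{K_{1}\ldots K_{n}}$ are pulled back from $\mathrm{Imm}\,T_{n}^{1}Q$, hence depend only on the coordinates $y^{K}$, $y_{j}^{K}$; therefore $dA_{K_{1}\ldots K_{n}}$ has no $dx^{i}$-component, and
\[
d\rho=\frac{1}{n!}\left(\frac{\partial A_{K_{1}\ldots K_{n}}}{\partial y^{L}}\,dy^{L}+\frac{\partial A_{K_{1}\ldots K_{n}}}{\partial y_{s}^{P}}\,dy_{s}^{P}\right)\wedge dy^{K_{1}}\wedge\ldots\wedge dy^{K_{n}}.
\]
Every $\pi^{1,0}$-vertical vector field on $(\pi^{1,0})^{-1}(W)$ is of the form $\xi=\xi_{s}^{P}\,\partial/\partial y_{s}^{P}$ with the $\xi_{s}^{P}$ arbitrary smooth functions. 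Since $i_{\xi}dx^{i}=0$ and $i_{\xi}dy^{K}=0$, the first summand of $d\rho$ is annihilated, and in the second summand only the factor $dy_{s}^{P}$ contributes, so that
\[
i_{\xi}d\rho=\frac{1}{n!}\,\frac{\partial A_{K_{1}\ldots K_{n}}}{\partial y_{s}^{P}}\,\xi_{s}^{P}\,dy^{K_{1}}\wedge\ldots\wedge dy^{K_{n}}.
\]

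Next, applying the horizontalization $h$ and using $h\,dy^{K}=y_{k}^{K}dx^{k}$ together with $dx^{k_{1}}\wedge\ldots\wedge dx^{k_{n}}=\varepsilon^{k_{1}\ldots k_{n}}\omega_{0}$, I obtain (all functions understood composed with $\pi^{2,1}$)
\[
h\,i_{\xi}d\rho=\frac{1}{n!}\left(\frac{\partial A_{K_{1}\ldots K_{n}}}{\partial y_{s}^{P}}\,y_{k_{1}}^{K_{1}}\ldots y_{k_{n}}^{K_{n}}\,\varepsilon^{k_{1}\ldots k_{n}}\right)\xi_{s}^{P}\,\omega_{0}.
\]
Since this expression is $\mathbf{R}$-linear in the components $\xi_{s}^{P}$, which may be prescribed arbitrarily, the vanishing of $h\,i_{\xi}d\rho$ for every $\pi^{1,0}$-vertical $\xi$ is equivalent to the vanishing of the coefficient of each $\xi_{s}^{P}$, i.e. to \eqref{eq:LepageCondition}. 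As the Lepage condition is itself chart-free, it suffices to establish this equivalence in one fibered chart.

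I do not expect a genuine obstacle: the argument is a short direct computation. The only points that need a little care are the $x^{i}$-independence of the coefficients $A_{K_{1}\ldots K_{n}}$ (so that $d\rho$ contributes no term $dx^{i}\wedge dy^{K_{1}}\wedge\ldots\wedge dy^{K_{n}}$, which in any case would be killed by $i_{\xi}$), and the observation that the components $\xi_{s}^{P}$ of a $\pi^{1,0}$-vertical field are genuinely free functions, which is what legitimises passing from ``$h\,i_{\xi}d\rho=0$ for all $\xi$'' to the pointwise identity \eqref{eq:LepageCondition}.
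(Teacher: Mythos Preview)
Your argument is correct and is in fact more direct than the paper's. You use the \emph{definition} of a Lepage form (the vanishing of $h\,i_{\xi}d\rho$ for all $\pi^{1,0}$-vertical $\xi$) and simply compute each operation in turn; since the coefficients $A_{K_1\ldots K_n}$ are $x$-independent and $\xi$ annihilates $dy^{K}$, only the $dy_s^P$-piece of $d\rho$ survives the contraction, and horizontalization then produces exactly the left-hand side of \eqref{eq:LepageCondition} multiplied by the free functions $\xi_s^P$. The paper instead invokes the structure theorem for Lepage forms (Theorem~\ref{Theorem:LepageForms}): it computes $p_1\rho$, compares with the Poincar\'e--Cartan principal component $\Theta=\mathscr{L}\omega_0+(\partial\mathscr{L}/\partial y_j^K)\omega^K\wedge\omega_j$, and then differentiates $\mathscr{L}$ \eqref{eq:LagrangianLepage} to reduce the resulting identity to \eqref{eq:LepageCondition}. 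Your route avoids Theorem~\ref{Theorem:LepageForms} altogether and is shorter; the paper's route, in exchange, makes explicit the intermediate identity \eqref{eq:CompareDerivative} (that $p_1\rho$ already has Poincar\'e--Cartan shape), which is what is used later in the proof of Corollary~\ref{Corollary:LagrangeFunctionHomog}. One small remark: your closing sentence about the Lepage condition being ``chart-free'' is slightly misphrased---the theorem is a local statement in the given chart $(V,\psi)$, and what you really need (and have) is that $h\,i_\xi d\rho=0$ is a pointwise condition, so establishing the equivalence on $V_n^1$ suffices.
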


\begin{proof}
We apply the criterion for Lepage forms on $J^{1}(\mathbf{R}^{n}\times Q)$.
If $\rho$ is expressed by \eqref{eq:horizontalForm}, we get from
\eqref{eq:k-contact-component}, in particular,

\begin{align*}
p_{1}\rho & =\frac{1}{1!(n-1)!}A_{K_{1}\ldots K_{n}}y_{j_{2}}^{K_{2}}\ldots y_{j_{n}}^{K_{n}}\omega^{K_{1}}\wedge dx^{j_{2}}\wedge\ldots\wedge dx^{j_{n}}\\
 & =\frac{1}{1!(n-1)!}A_{KK_{2}\ldots K_{n}}y_{j_{2}}^{K_{2}}\ldots y_{j_{n}}^{K_{n}}\varepsilon^{jj_{2}\ldots j_{n}}\omega^{K}\wedge\omega_{j},
\end{align*}
where $\omega^{K}=dy^{K}-y_{j}^{K}dx^{j}$ and $\omega_{j}=i_{\partial/\partial x^{j}}\omega_{0}$.
By Theorem \ref{Theorem:LepageForms} the form $\rho$ is a Lepage
form, or the Lepage equivalent of $h\rho=\mathscr{L}\omega_{0}$,
where $\mathscr{L}$ is given by \eqref{eq:LagrangianLepage}, if
and only if the principal component $\Theta$ of $\rho$ has the expression
\[
\Theta=\mathscr{L}\omega_{0}+\frac{\partial\mathscr{L}}{\partial y_{j}^{K}}\omega^{K}\wedge\omega_{j}.
\]
This means, however, that $\rho$ is a Lepage form if and only if
\begin{equation}
\frac{1}{1!(n-1)!}A_{KK_{2}\ldots K_{n}}y_{j_{2}}^{K_{2}}\ldots y_{j_{n}}^{K_{n}}\varepsilon^{jj_{2}\ldots j_{n}}=\frac{\partial\mathscr{L}}{\partial y_{j}^{K}}.\label{eq:CompareDerivative}
\end{equation}
Differentiating $\mathscr{L}$ \eqref{eq:LagrangianLepage}, we have
\begin{align*}
 & \frac{\partial\mathscr{L}}{\partial y_{j}^{K}}=\frac{1}{n!}\frac{\partial A_{K_{1}\ldots K_{n}}}{\partial y_{j}^{K}}y_{j_{1}}^{K_{1}}\ldots y_{j_{n}}^{K_{n}}\varepsilon^{j_{1}\ldots j_{n}}+\frac{1}{n!}A_{K_{1}\ldots K_{n}}\frac{\partial}{\partial y_{j}^{K}}\left(y_{j_{1}}^{K_{1}}\ldots y_{j_{n}}^{K_{n}}\right)\varepsilon^{j_{1}\ldots j_{n}}\\
 & \;=\frac{1}{n!}\frac{\partial A_{K_{1}\ldots K_{n}}}{\partial y_{j}^{K}}y_{j_{1}}^{K_{1}}\ldots y_{j_{n}}^{K_{n}}\varepsilon^{j_{1}\ldots j_{n}}\\
 & \;+\frac{1}{n!}A_{KK_{2}\ldots K_{n}}y_{j_{2}}^{K_{2}}\ldots y_{j_{n}}^{K_{n}}\varepsilon^{jj_{2}\ldots j_{n}}+\frac{1}{n!}A_{K_{1}KK_{3}\ldots K_{n}}y_{j_{1}}^{K_{1}}y_{j_{3}}^{K_{3}}\ldots y_{j_{n}}^{K_{n}}\varepsilon^{j_{1}jj_{3}\ldots j_{n}}\\
 & \;+\ldots+\frac{1}{n!}A_{K_{1}\ldots K_{n-1}K}y_{j_{1}}^{K_{1}}\ldots y_{j_{n-1}}^{K_{n-1}}\varepsilon^{j_{1}\ldots j_{n-1}j}\\
 & \;=\frac{1}{n!}\frac{\partial A_{K_{1}\ldots K_{n}}}{\partial y_{j}^{K}}y_{j_{1}}^{K_{1}}\ldots y_{j_{n}}^{K_{n}}\varepsilon^{j_{1}\ldots j_{n}}+\frac{1}{(n-1)!}A_{KK_{2}\ldots K_{n}}y_{j_{2}}^{K_{2}}\ldots y_{j_{n}}^{K_{n}}\varepsilon^{jj_{2}\ldots j_{n}}.
\end{align*}
Substituting now this expression on the right-hand side of \eqref{eq:CompareDerivative},
we get the formula \eqref{eq:LepageCondition}. This completes the
proof.
\end{proof}
\begin{cor}
\label{Corollary:LagrangeFunctionHomog}If $\rho$ is a Lepage form
on $\mathrm{Imm}\,T_{n}^{1}Q$, then $h\rho=\mathscr{L}\omega_{0}$,
where the function $\mathscr{L}:\mathrm{Imm}\,T_{n}^{1}Q\rightarrow\mathbf{R}$
is positive homogeneous.
\end{cor}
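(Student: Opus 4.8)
The plan is to deduce the statement from the Euler--Zermelo criterion, Theorem \ref{Theorem:Zermelo}, feeding it the explicit Lepage condition of Theorem \ref{Theorem:LepCond}. Being $\pi^{1,0}$-horizontal (as throughout this subsection), $\rho$ has the chart expression \eqref{eq:horizontalForm} with coefficients $A_{K_1\ldots K_n}$ that may be taken totally skew-symmetric, so that $h\rho=\mathscr{L}\omega_0$ with $\mathscr{L}$ given by \eqref{eq:LagrangianLepage}. It then suffices to verify that $\mathscr{L}$ satisfies the Zermelo conditions \eqref{eq:Zermelo}, since by Theorem \ref{Theorem:Zermelo} these are equivalent to positive homogeneity.

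For this I would start from the expression for $\partial\mathscr{L}/\partial y_j^K$ already computed in the proof of Theorem \ref{Theorem:LepCond}, namely
\[
\frac{\partial\mathscr{L}}{\partial y_j^K}=\frac{1}{n!}\frac{\partial A_{K_1\ldots K_n}}{\partial y_j^K}y_{j_1}^{K_1}\ldots y_{j_n}^{K_n}\varepsilon^{j_1\ldots j_n}+\frac{1}{(n-1)!}A_{KK_2\ldots K_n}y_{j_2}^{K_2}\ldots y_{j_n}^{K_n}\varepsilon^{jj_2\ldots j_n}.
\]
Since $\rho$ is a Lepage form, Theorem \ref{Theorem:LepCond} yields the identity \eqref{eq:LepageCondition}, which asserts exactly that the first term on the right vanishes identically. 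Hence $\partial\mathscr{L}/\partial y_j^K=\frac{1}{(n-1)!}A_{KK_2\ldots K_n}y_{j_2}^{K_2}\ldots y_{j_n}^{K_n}\varepsilon^{jj_2\ldots j_n}$.

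The remaining step is purely algebraic. One checks that $P^{i_1\ldots i_n}:=A_{K_1\ldots K_n}y_{i_1}^{K_1}\ldots y_{i_n}^{K_n}$ is totally antisymmetric in $i_1,\ldots,i_n$: transposing two of these indices and relabelling the corresponding dummy indices $K_a$ reduces to the skew-symmetry of $A_{K_1\ldots K_n}$. Therefore $P^{i_1\ldots i_n}=\mathscr{L}\,\varepsilon^{i_1\ldots i_n}$ by \eqref{eq:LagrangianLepage}. Contracting the last displayed formula with $y_l^K$ and substituting this identity gives
\[
\frac{\partial\mathscr{L}}{\partial y_j^K}y_l^K=\frac{1}{(n-1)!}\mathscr{L}\,\varepsilon^{lj_2\ldots j_n}\varepsilon^{jj_2\ldots j_n}=\delta_l^j\mathscr{L},
\]
using $\varepsilon^{lj_2\ldots j_n}\varepsilon^{jj_2\ldots j_n}=(n-1)!\,\delta_l^j$. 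This is precisely \eqref{eq:Zermelo}, so Theorem \ref{Theorem:Zermelo} gives that $\mathscr{L}$ is positive homogeneous, as claimed.

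I do not expect a genuine obstacle here: the content is merely that the Lepage criterion \eqref{eq:LepageCondition}, once contracted against the velocities $y_l^K$, is equivalent to the first-order Zermelo system for $\mathscr{L}$. The only points needing care are the index bookkeeping --- establishing the total antisymmetry of $P^{i_1\ldots i_n}$ and evaluating the Levi-Civita contraction --- and the preliminary remark that in \eqref{eq:horizontalForm} only the skew-symmetric part of $A_{K_1\ldots K_n}$ contributes.
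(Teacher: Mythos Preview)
Your proof is correct and follows essentially the same approach as the paper: both use the simplified expression for $\partial\mathscr{L}/\partial y_j^K$ obtained via the Lepage condition \eqref{eq:LepageCondition}, then exploit the skew-symmetry of $A_{K_1\ldots K_n}$ to verify the Zermelo conditions. Your argument is in fact slightly tidier --- packaging the antisymmetry via $P^{i_1\ldots i_n}=\mathscr{L}\,\varepsilon^{i_1\ldots i_n}$ and the contraction $\varepsilon^{lj_2\ldots j_n}\varepsilon^{jj_2\ldots j_n}=(n-1)!\,\delta_l^j$ handles both the off-diagonal vanishing and the diagonal value in one stroke, whereas the paper treats these two cases separately.
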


\begin{proof}
Suppose $\rho$ is expressed by \eqref{eq:horizontalForm} with respect
to a chart $(V,\psi)$ on $Q$, hence $\mathscr{L}$ has an expression
\eqref{eq:LagrangianLepage}. It is easy to verify the Zermelo conditions
\eqref{eq:Zermelo}. Indeed, applying the Lepage form criterion \eqref{eq:LepageCondition},
we have
\begin{align}
\frac{\partial\mathscr{L}}{\partial y_{j}^{K}}y_{k}^{K} & =\frac{1}{(n-1)!}A_{K_{1}K_{2}\ldots K_{n}}y_{k}^{K_{1}}y_{j_{2}}^{K_{2}}\ldots y_{j_{n}}^{K_{n}}\varepsilon^{jj_{2}\ldots j_{n}}.\label{eq:ZermeloSpecial}
\end{align}
Since $A_{K_{1}K_{2}\ldots K_{n}}$(resp. $\varepsilon^{jj_{2}\ldots j_{n}}$)
are skew-symmetric in subscripts (resp. superscripts), we see that
the right-hand side of \eqref{eq:ZermeloSpecial} vanishes whenever
$j\neq k$. From \eqref{eq:ZermeloSpecial} and \eqref{eq:LagrangianLepage}
it follows that
\[
\frac{\partial\mathscr{L}}{\partial y_{j}^{K}}y_{j}^{K}=\frac{1}{(n-1)!}A_{K_{1}K_{2}\ldots K_{n}}y_{j}^{K_{1}}y_{j_{2}}^{K_{2}}\ldots y_{j_{n}}^{K_{n}}\varepsilon^{jj_{2}\ldots j_{n}}=n\mathscr{L},
\]
as required.
\end{proof}
Applying Theorem \ref{Theorem:LepCond} and Corollary \ref{Corollary:LagrangeFunctionHomog},
we obtain a mapping $\rho\rightarrow W_{h\rho}$, assigning to any
Lepage form $\rho$ on $\mathrm{Imm}\,T_{n}^{1}Q$ the Lepage equivalent
$W_{h\rho}$ of $h\rho$ defined on $G_{n}^{1}Q$ by formula \eqref{eq:FundLepEqZermelo}.
\begin{thm}
\label{CorollaryLepage}\textup{(a)} For any Lagrangian $\lambda\in\Omega_{n,\mathbf{R}^{n}}^{1}(\mathbf{R}^{n}\times Q)$,
$W_{\lambda}$ is a Lepage form.

\textup{(b)} If $\rho$ is a Lepage form on $\mathrm{Imm}\,T_{n}^{1}Q$,
then the fundamental Lepage equivalent $W_{h\rho}$ of $h\rho$ satisfies
\[
(T_{n}^{1}\zeta)^{*}W_{h\rho}=(T_{n}^{1}\zeta)^{*}\rho,
\]
for arbitrary immersion $\zeta:U\rightarrow Q$. 

\textup{(c)} Let $\rho$ be a Lepage form on $\mathrm{Imm}\,T_{n}^{1}Q$,
expressed by \eqref{eq:horizontalForm}. Then $\rho$ coincides with
the fundamental Lepage equivalent $W_{h\rho}$ of $h\rho$ if and
only if the components of $\rho$ satisfy
\[
\frac{\partial^{k}A_{K_{1}\ldots K_{n-k}L_{n-k+1}\ldots L_{n}}}{\partial y_{j_{n-k+1}}^{K_{n-k+1}}\ldots\partial y_{j_{n}}^{K_{n}}}y_{j_{n-k+1}}^{L_{n-k+1}}\ldots y_{j_{n}}^{L_{n}}=0,
\]
for every $k$, $1\leq k\leq n-1$.
\end{thm}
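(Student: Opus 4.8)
The plan is to handle the three parts separately, leaning on the structure theorems of Sections 2 and 4.

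\textbf{Part (a).} I would argue that, under the standing hypothesis that the Lagrange function $\mathscr{L}$ of $\lambda$ is positive homogeneous (which is the situation in which $W_{\lambda}$ is defined, cf.\ Theorem \ref{Theorem:LepageHilbert}, and which is in any case forced by Corollary \ref{Corollary:LagrangeFunctionHomog}), Theorem \ref{Theorem:LepageHilbert} identifies $W_{\lambda}$ with the fundamental Lepage equivalent $Z_{\lambda}$, and $Z_{\lambda}$ is a Lepage equivalent of $\lambda$ by Theorem \ref{Fundamental-Theorem}; since a Lepage equivalent is in particular a Lepage form on $J^{1}(\mathbf{R}^{n}\times Q)$, and a form on $\mathrm{Imm}\,T_{n}^{1}Q$ is by definition a Lepage form precisely when it is one on $J^{1}(\mathbf{R}^{n}\times Q)$, we are done. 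A self-contained alternative is to apply Theorem \ref{Theorem:LepCond} directly: $W_{\lambda}$ is $\pi^{1,0}$-horizontal of the shape \eqref{eq:horizontalForm} with coefficients $A_{K_{1}\ldots K_{n}}=\frac{1}{n!}(\partial^{n}\mathscr{L}/\partial y_{j_{1}}^{K_{1}}\cdots\partial y_{j_{n}}^{K_{n}})\varepsilon_{j_{1}\ldots j_{n}}$, so one must verify \eqref{eq:LepageCondition}. Differentiating with respect to $y_{s}^{P}$ the identity $(\partial^{n}\mathscr{L}/\partial y_{j_{1}}^{K_{1}}\cdots\partial y_{j_{n}}^{K_{n}})\varepsilon_{j_{1}\ldots j_{n}}y_{i_{1}}^{K_{1}}\cdots y_{i_{n}}^{K_{n}}\varepsilon^{i_{1}\ldots i_{n}}=(n!)^{2}\mathscr{L}$ (Lemma \ref{Lemma:vlastnosti-homog} with $k=0$, $l=n$, contracted once more with $\varepsilon$), the Leibniz rule splits the left-hand side into the expression of \eqref{eq:LepageCondition} and $n$ equal terms each reducing, by Lemma \ref{Lemma:vlastnosti-homog} with $k=1$, to $n!(n-1)!\,\partial\mathscr{L}/\partial y_{s}^{P}$; their sum is $(n!)^{2}\partial\mathscr{L}/\partial y_{s}^{P}$, i.e.\ the derivative of the right-hand side, forcing \eqref{eq:LepageCondition}.

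\textbf{Part (b).} Here I would first note that $W_{h\rho}$ is a Lepage equivalent of $h\rho$ (Theorem \ref{Theorem:LepageHilbert}), hence $hW_{h\rho}=h\rho$, so $\rho-W_{h\rho}$ is a contact form on $\mathrm{Imm}\,T_{n}^{1}Q$. Then, for an immersion $\zeta:U\to Q$, the curve $T_{n}^{1}\zeta$ corresponds under the identification $\phi$ of \eqref{eq:Identification} to the jet prolongation $J^{1}\bar\zeta$ of the section $\bar\zeta:x\mapsto(x,\zeta(x))$ of $\mathbf{R}^{n}\times Q$, and the standard identity $J^{1}\bar\zeta^{*}\eta=J^{2}\bar\zeta^{*}h\eta$ shows a contact form pulls back to zero along $T_{n}^{1}\zeta$. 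Applying this to $\eta=\rho-W_{h\rho}$ gives $(T_{n}^{1}\zeta)^{*}W_{h\rho}=(T_{n}^{1}\zeta)^{*}\rho$.

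\textbf{Part (c).} Both $\rho$ and $W_{h\rho}$ are $\pi^{1,0}$-horizontal $n$-forms written in the basis $dy^{K_{1}}\wedge\cdots\wedge dy^{K_{n}}$, with $W_{h\rho}$ having the totally skew coefficients $\tilde A_{K_{1}\ldots K_{n}}=\frac{1}{n!}(\partial^{n}\mathscr{L}/\partial y_{j_{1}}^{K_{1}}\cdots\partial y_{j_{n}}^{K_{n}})\varepsilon_{j_{1}\ldots j_{n}}$, $\mathscr{L}$ as in \eqref{eq:LagrangianLepage}; since replacing $A_{K_{1}\ldots K_{n}}$ by its skew-symmetric part does not change $\rho$, I may assume $A$ totally skew, so that $\rho=W_{h\rho}$ if and only if $A_{K_{1}\ldots K_{n}}=\tilde A_{K_{1}\ldots K_{n}}$. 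The heart of the proof is to compute $\partial^{n}\mathscr{L}/\partial y_{j_{1}}^{K_{1}}\cdots\partial y_{j_{n}}^{K_{n}}$ by iterated differentiation of \eqref{eq:LagrangianLepage}: at the first step the term in which the derivative hits $A$, namely $\frac{1}{n!}(\partial A_{K_{1}\ldots K_{n}}/\partial y_{j}^{K})y_{i_{1}}^{K_{1}}\cdots y_{i_{n}}^{K_{n}}\varepsilon^{i_{1}\ldots i_{n}}$, vanishes by the Lepage condition \eqref{eq:LepageCondition} (valid since $\rho$ is a Lepage form), leaving $\partial\mathscr{L}/\partial y_{j}^{K}=\frac{1}{(n-1)!}A_{KM_{2}\ldots M_{n}}y_{i_{2}}^{M_{2}}\cdots y_{i_{n}}^{M_{n}}\varepsilon^{ji_{2}\ldots i_{n}}$; iterating, and using skew-symmetry of $A$ and $\varepsilon$ to collapse the equal velocity-derivative terms, at each further step one gets, besides the ``main'' term $\frac{1}{(n-m)!}A_{K_{1}\ldots K_{m}M_{m+1}\ldots M_{n}}y_{i_{m+1}}^{M_{m+1}}\cdots y_{i_{n}}^{M_{n}}\varepsilon^{j_{1}\ldots j_{m}i_{m+1}\ldots i_{n}}$, ``defect'' terms in which $A$ has been differentiated $l\ge1$ times against indices that are afterwards re-contracted with velocities; after the concluding contraction with $\varepsilon_{j_{1}\ldots j_{n}}$ these defects reduce, up to nonzero numerical factors, precisely to the left-hand sides of the asserted identities for $k=1,\ldots,n-1$, together with a term containing $\partial^{n}A$ contracted with all $n$ velocities, which is forced to vanish by differentiating \eqref{eq:LepageCondition} successively and substituting the identities for $k=1,\ldots,n-1$. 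Hence $A=\tilde A$ if and only if all the displayed identities hold; for orientation, in the case $n=2$ the computation collapses to $\tilde A_{K_{1}K_{2}}=A_{K_{1}K_{2}}-\frac{1}{2}\,y_{j}^{M}\,\partial A_{K_{2}M}/\partial y_{j}^{K_{1}}$, which makes the equivalence transparent.

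The hard part will be the index bookkeeping in part (c): keeping track of which of the $n$ differentiations land on $A$ and which on the velocity factors, organizing the resulting Kronecker and Levi-Civita contractions, and running the induction through the repeatedly differentiated Lepage conditions so that the $\partial^{n}A$-term is eliminated and the ``defect'' is recognized as exactly the stated family of expressions. Parts (a) and (b), by contrast, are essentially immediate once the results of Sections 2 and 4 are invoked.
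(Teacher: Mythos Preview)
Your approach matches the paper's throughout: for (a) the paper likewise notes one may either invoke the construction of $W_\lambda$ as a Lepage equivalent or verify \eqref{eq:LepageCondition} directly for the coefficients $A_{K_1\ldots K_n}=\frac{1}{n!}(\partial^{n}\mathscr{L}/\partial y_{j_1}^{K_1}\cdots\partial y_{j_n}^{K_n})\varepsilon_{j_1\ldots j_n}$; for (b) the paper first gives exactly your one-line argument (both forms are Lepage equivalents of $h\rho$, hence differ by a contact form) before redoing the computation explicitly because it feeds into (c); and for (c) the paper carries out your iterated differentiation to obtain the closed formula \eqref{eq:nthDerivativeL-1} for $\partial^{p}\mathscr{L}/\partial y_{j_1}^{K_1}\cdots\partial y_{j_p}^{K_p}$, specializes to $p=n$ to get the expression \eqref{eq:Whrho} for $W_{h\rho}$, and reads (c) off by comparing coefficients with $\rho$.

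One small correction to your sketch of (c): there is in fact no $\partial^{n}A$ term to dispose of. The Lepage condition \eqref{eq:LepageCondition} kills the branch ``derivative hits $A$'' already at the very first differentiation of $\mathscr{L}$, so at step $p$ the order of differentiation of $A$ is at most $p-1$; with $p=n$ the sum in \eqref{eq:nthDerivativeL-1} runs only over $0\le k\le n-1$, and after contracting with $\varepsilon_{j_1\ldots j_n}$ the $k=0$ term reproduces $A_{K_1\ldots K_n}$ while the terms $1\le k\le n-1$ are, up to nonzero combinatorial factors, precisely the expressions in (c). You therefore do not need to ``differentiate \eqref{eq:LepageCondition} successively and substitute the identities for $k=1,\ldots,n-1$'' to eliminate anything---that part of your outline can simply be dropped.
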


\begin{proof}
1. Assertion (a) is a direct consequence of the definition of $W_{\lambda}$.
Indeed, one can also verify the Lepage form condition \eqref{eq:LepageCondition}
directly for
\[
A_{K_{1}K_{2}\ldots K_{n}}=\frac{1}{n!}\frac{\partial^{n}\mathscr{L}}{\partial y_{j_{1}}^{K_{1}}\partial y_{j_{2}}^{K_{2}}\ldots\partial y_{j_{n}}^{K_{n}}}\varepsilon_{j_{1}j_{2}\ldots j_{n}}.
\]
2. Since both $\rho$ and $W_{h\rho}$ are Lepage equivalents of the
Lagrangian $h\rho$, the assertion (b) is implied by the definition
of Lepage equivalent of a Lagrangian. 

However, we prove (b) directly since a proof of assertion (c) is included.
Suppose $h\rho=\mathscr{L}\omega_{0}$, where $\mathscr{L}:\mathrm{Imm}\,T_{n}^{1}Q\rightarrow\mathbf{R}$
is the Lagrange function expressed by \eqref{eq:LagrangianLepage}.
Applying the fact that $\rho$ is a Lepage form satisfying \eqref{eq:LepageCondition},
we obtain for any $1\leq p\leq n$, 
\begin{align}
\frac{\partial^{p}\mathscr{L}}{\partial y_{j_{1}}^{K_{1}}\ldots\partial y_{j_{p}}^{K_{p}}} & =\sum_{k=0}^{p-1}\frac{1}{(n-p+k)!}\left(\begin{array}{c}
p-1\\
k
\end{array}\right)\frac{\partial^{k}A_{K_{1}\ldots K_{p-k}L_{p-k+1}\ldots L_{n}}}{\partial y_{j_{p-k+1}}^{K_{p-k+1}}\ldots\partial y_{j_{p}}^{K_{p}}}\label{eq:nthDerivativeL-1}\\
 & \quad\quad\quad\cdot y_{s_{p-k+1}}^{L_{p-k+1}}\ldots y_{s_{n}}^{L_{n}}\varepsilon^{j_{1}\ldots j_{p-k}s_{p-k+1}\ldots s_{n}}.\nonumber 
\end{align}
Hence $W_{h\rho}$, given by \eqref{eq:FundLepEqZermelo}, is expressed
as
\begin{align}
W_{h\rho} & =\frac{1}{(n!)^{2}}\frac{\partial^{n}\mathscr{L}}{\partial y_{j_{1}}^{K_{1}}\partial y_{j_{2}}^{K_{2}}\ldots\partial y_{j_{n}}^{K_{n}}}\varepsilon_{j_{1}j_{2}\ldots j_{n}}dy^{K_{1}}\land dy^{K_{2}}\wedge\ldots\wedge dy^{K_{n}}\nonumber \\
 & =\frac{1}{(n!)^{2}}\left(\sum_{k=0}^{n-1}\frac{1}{k!}\left(\begin{array}{c}
n-1\\
k
\end{array}\right)\frac{\partial^{k}A_{K_{1}\ldots K_{n-k}L_{n-k+1}\ldots L_{n}}}{\partial y_{j_{n-k+1}}^{K_{n-k+1}}\ldots\partial y_{j_{n}}^{K_{n}}}y_{s_{n-k+1}}^{L_{n-k+1}}\ldots y_{s_{n}}^{L_{n}}\right)\nonumber \\
 & \quad\quad\quad\cdot\varepsilon^{j_{1}\ldots j_{n-k}s_{n-k+1}\ldots s_{n}}\varepsilon_{j_{1}j_{2}\ldots j_{n}}dy^{K_{1}}\land dy^{K_{2}}\wedge\ldots\wedge dy^{K_{n}}\label{eq:Whrho}\\
 & =\frac{1}{(n!)^{2}}\left(\sum_{k=1}^{n-1}\frac{1}{k!}\left(\begin{array}{c}
n-1\\
k
\end{array}\right)\frac{\partial^{k}A_{K_{1}\ldots K_{n-k}L_{n-k+1}\ldots L_{n}}}{\partial y_{j_{n-k+1}}^{K_{n-k+1}}\ldots\partial y_{j_{n}}^{K_{n}}}y_{j_{n-k+1}}^{L_{n-k+1}}\ldots y_{j_{n}}^{L_{n}}(n-k)!k!\right.\nonumber \\
 & \quad\quad\quad+n!A_{K_{1}\ldots K_{n}}\Biggr)\cdot dy^{K_{1}}\land dy^{K_{2}}\wedge\ldots\wedge dy^{K_{n}}.\nonumber 
\end{align}
Applying the identities \eqref{eq:ZermeloDifferentiating} into \eqref{eq:nthDerivativeL-1},
and using \eqref{eq:nthDerivativeL-1} recursively for $1\leq p\leq n$,
we derive the condition
\begin{equation}
\frac{\partial^{k}A_{K_{1}\ldots K_{n-k}L_{n-k+1}\ldots L_{n}}}{\partial y_{j_{n-k+1}}^{K_{n-k+1}}\ldots\partial y_{j_{n-1}}^{K_{n-1}}\partial y_{j_{n}}^{K_{n}}}y_{i_{1}}^{K_{1}}\ldots y_{i_{n-k}}^{K_{n-k}}y_{i_{n-k+1}}^{K_{n-k+1}}\ldots y_{i_{n-1}}^{K_{n-1}}y_{j_{n-k+1}}^{L_{n-k+1}}\ldots y_{j_{n}}^{L_{n}}=0\label{eq:ConditionDerived}
\end{equation}
for all $k,$ where $1\leq k\leq n-1$. Now, with the help of \eqref{eq:ConditionDerived},
it is straightforward to see that the pull-back operation along $T_{n}^{1}\zeta$
onto $W_{h\rho}$ \eqref{eq:Whrho} gives 
\[
T_{n}^{1}\zeta{}^{*}W_{h\rho}=T_{n}^{1}\zeta{}^{*}\left(\frac{1}{n!}A_{K_{1}\ldots K_{n}}dy^{K_{1}}\land dy^{K_{2}}\wedge\ldots\wedge dy^{K_{n}}\right)=T_{n}^{1}\zeta{}^{*}\rho.
\]
3. Assertion (c) directly follows from the proof of (b), the expression
\eqref{eq:Whrho} of $W_{h\rho}$.
\end{proof}
The following theorem describes a close relation of Lepage forms with
the calculus of variations, well-known from the variational theory
on fibered manifolds (cf. Krupka and Saunders \cite{Krupka-Book},
and references therein).
\begin{thm}
Suppose $\rho$ is a $\pi^{1,0}$-horizontal Lepage form on $\mathrm{Imm}\,T_{n}^{1}Q$,
and $\mathscr{L}:\mathrm{Imm}\,T_{n}^{1}Q\rightarrow\mathbf{R}$ is
the Lagrange function associated to $\rho$. Then in any chart $(V,\psi)$
on $Q$
\begin{equation}
p_{1}d\rho=E_{K}(\mathscr{L})\omega^{K}\land\omega_{0},\label{eq:Euler-Lagrange-form}
\end{equation}
where
\begin{equation}
E_{K}(\mathscr{L})=\frac{\partial\mathscr{L}}{\partial y^{K}}-d_{j}\frac{\partial\mathscr{L}}{\partial y_{j}^{K}}.\label{eq:E-L-expressions}
\end{equation}
\end{thm}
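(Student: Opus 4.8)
The plan is to obtain the formula as an immediate specialization of the general structure theory of Lepage forms from Section~\ref{sec:2}, transported through the canonical identification of $\mathrm{Imm}\,T_n^1 Q$ with an open subset of $J^1(\mathbf{R}^n\times Q)$. First I would note that, by the definition of a Lepage form on $\mathrm{Imm}\,T_n^1 Q$ adopted in Subsection~\ref{sec:5-1}, the form $\rho$ is a Lepage form on $J^1(\mathbf{R}^n\times Q)$, so Theorems~\ref{Theorem:LepageForms} and~\ref{Thm:dLep} apply. Since $\rho$ is $\pi^{1,0}$-horizontal with chart expression \eqref{eq:horizontalForm}, the canonical decomposition gives $h\rho=\mathscr{L}\omega_0$, where $\mathscr{L}$ is the first-order function \eqref{eq:LagrangianLepage} on $\mathrm{Imm}\,T_n^1 Q$; in particular, the coefficient denoted $f_0$ in Theorems~\ref{Theorem:LepageForms} and~\ref{Thm:dLep} coincides with $\mathscr{L}$.

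Next I would invoke Theorem~\ref{Thm:dLep}. Writing $(\pi^{2,1})^{*}\rho=\Theta+d\mu+\eta$ as in \eqref{eq:LepFormExp}, one obtains $(\pi^{2,1})^{*}d\rho=E+F$, where $E=(\partial\mathscr{L}/\partial y^K-d_j\,\partial\mathscr{L}/\partial y_j^K)\,\omega^K\wedge\omega_0$ is $1$-contact and $\pi^{2,0}$-horizontal, and $F$ has order of contactness $\geq 2$. Because $d\rho$ has degree $n+1$ over an $n$-dimensional base, its $h$-component vanishes, so the canonical decomposition of $d\rho$ reads $(\pi^{2,1})^{*}d\rho=\sum_{k=1}^{n+1}p_k d\rho$; matching this with $E+F$ and using uniqueness of the decomposition by order of contactness forces $p_1 d\rho=E$. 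Putting $E_K(\mathscr{L})=\partial\mathscr{L}/\partial y^K-d_j\,\partial\mathscr{L}/\partial y_j^K$ then yields precisely \eqref{eq:Euler-Lagrange-form} and \eqref{eq:E-L-expressions}.

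So there is no real obstacle here: the only points requiring attention are the identification $f_0=\mathscr{L}$ and the matching of the two decompositions of $d\rho$ by order of contactness. For a self-contained argument avoiding Section~\ref{sec:2}, one can instead compute $p_1 d\rho$ directly: differentiate \eqref{eq:horizontalForm}, substitute $dy^K=\omega^K+y_j^K\,dx^j$ and $dy_s^L=\omega_s^L+y_{sj}^L\,dx^j$, keep the terms carrying exactly one contact factor, and collapse them into $E_K(\mathscr{L})\,\omega^K\wedge\omega_0$ using the Lepage condition \eqref{eq:LepageCondition} together with the identity for $\partial\mathscr{L}/\partial y_j^K$ derived in the proof of Theorem~\ref{Theorem:LepCond}. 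The only mildly technical ingredient in that route is the Levi-Civita bookkeeping and the skew-symmetrizations $\mathrm{Alt}(\ldots)$, which are routine.
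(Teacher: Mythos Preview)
Your main argument via Theorem~\ref{Thm:dLep} is correct and more conceptual than the paper's own proof. The paper instead carries out the direct computation you sketch as an alternative at the end: it differentiates the chart expression \eqref{eq:horizontalForm} to obtain $d\rho$ with coefficients $B_{L_1\ldots L_{n+1}}$ and $B^j_{L_1\ldots L_{n+1}}$, extracts the $1$-contact part $p_1 d\rho$ explicitly, invokes the Lepage condition \eqref{eq:LepageCondition} to annihilate the summand containing $\omega_j^K$, and finally verifies that the remaining coefficients $\varepsilon_K$ coincide with $E_K(\mathscr{L})$ using the explicit formula \eqref{eq:LagrangianLepage} for $\mathscr{L}$. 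Your route bypasses all of this Levi-Civita bookkeeping by appealing to the structural decomposition already proved in Section~\ref{sec:2}; the identification $f_0=\mathscr{L}$ and the matching of contactness degrees are exactly the two points that need checking, and you have them right. The trade-off is that the paper's computation is self-contained and makes the role of condition \eqref{eq:LepageCondition} visible, whereas your argument treats Theorem~\ref{Thm:dLep} as a black box but is considerably shorter.
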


\begin{proof}
Suppose $\rho$ has an expression \eqref{eq:horizontalForm} with
respect to a chart on $Q$, with coefficients satisfying the Lepage
condition \eqref{eq:LepageCondition}. We compute the exterior derivative
$d\rho$ of the Lepage form $\rho$, and its $1$-contact component
$p_{1}d\rho$. We have
\[
d\rho=\frac{1}{(n+1)!}\left(B_{L_{1}L_{2}\ldots L_{n+1}}dy^{L_{1}}+B_{L_{1}L_{2}\ldots L_{n+1}}^{j}dy_{j}^{L_{1}}\right)\land dy^{L_{2}}\wedge\ldots\wedge dy^{L_{n+1}},
\]
where
\begin{align*}
B_{L_{1}L_{2}\ldots L_{n+1}} & =\frac{\partial A_{L_{2}L_{3}\ldots L_{n+1}}}{\partial y^{L_{1}}}-\frac{\partial A_{L_{1}L_{3}L_{4}\ldots L_{n+1}}}{\partial y^{L_{2}}}-\ldots-\frac{\partial A_{L_{2}L_{3}\ldots L_{n}L_{1}}}{\partial y^{L_{n+1}}},\\
B_{L_{1}L_{2}\ldots L_{n+1}}^{j} & =(n+1)\frac{\partial A_{L_{2}L_{3}\ldots L_{n+1}}}{\partial y_{j}^{L_{1}}}.
\end{align*}
By a direct calculation we get
\begin{align}
p_{1}d\rho & =\frac{1}{n!}B_{KL_{1}L_{2}\ldots L_{n}}y_{i_{1}}^{L_{1}}y_{i_{2}}^{L_{2}}\ldots y_{i_{n}}^{L_{n}}\varepsilon^{i_{1}i_{2}\ldots i_{n}}\omega^{K}\wedge\omega_{0}\nonumber \\
 & +\frac{1}{(n+1)!}B_{KL_{1}L_{2}\ldots L_{n}}^{j}y_{i_{1}}^{L_{1}}y_{i_{2}}^{L_{2}}\ldots y_{i_{n}}^{L_{n}}\varepsilon^{i_{1}i_{2}\ldots i_{n}}\omega_{j}^{K}\land\omega_{0}\label{eq:p1dRho}\\
 & -\frac{n}{(n+1)!}B_{L_{1}KL_{2}\ldots L_{n}}^{j}y_{ji_{1}}^{L_{1}}y_{i_{2}}^{L_{2}}\ldots y_{i_{n}}^{L_{n}}\varepsilon^{i_{1}i_{2}\ldots i_{n}}\omega^{K}\land\omega_{0}.\nonumber 
\end{align}
But from the condition \eqref{eq:LepageCondition} it follows that
the second summand of \eqref{eq:p1dRho} vanishes hence $p_{1}d\rho=\varepsilon_{K}\omega^{K}\wedge\omega_{0}$,
where
\begin{align}
\varepsilon_{K} & =\frac{1}{n!}\left(\frac{\partial A_{L_{1}L_{2}\ldots L_{n}}}{\partial y^{K}}-\frac{\partial A_{KL_{2}L_{3}\ldots L_{n}}}{\partial y^{L_{1}}}-\ldots-\frac{\partial A_{L_{1}L_{2}\ldots L_{n-1}K}}{\partial y^{L_{n}}}\right)\label{eq:epsPom}\\
 & \quad\cdot y_{i_{1}}^{L_{1}}y_{i_{2}}^{L_{2}}\ldots y_{i_{n}}^{L_{n}}\varepsilon^{i_{1}i_{2}\ldots i_{n}}-\frac{1}{(n-1)!}\frac{\partial A_{KL_{2}\ldots L_{n}}}{\partial y_{j}^{L_{1}}}y_{ji_{1}}^{L_{1}}y_{i_{2}}^{L_{2}}\ldots y_{i_{n}}^{L_{n}}\varepsilon^{i_{1}i_{2}\ldots i_{n}}.\nonumber 
\end{align}
Now, using the properties of the Lagrange function $\mathscr{L}$
associated to $\rho$, it is easy to verify that the coefficients
of $p_{1}d\rho$, the functions $\varepsilon_{K}$ \eqref{eq:epsPom},
coincide with $E_{K}(\mathscr{L})$ given by \eqref{eq:E-L-expressions}.
\end{proof}
The $(n+1)$-form $E_{\rho}=p_{1}d\rho$ \eqref{eq:Euler-Lagrange-form}
is called the \textit{Euler\textendash Lagrange form} associated with
a Lepage form $\rho$, and its coefficients $E_{K}(\mathscr{L})$
are the \textit{Euler\textendash Lagrange expressions} of the Lagrange
function $\mathscr{L}$. 

\subsection{The variational integral, variations, and the first variation formula}

Let $\rho$ be a $\pi^{1,0}$-horizontal  Lepage form on $\mathrm{Imm}\,T_{n}^{1}Q$,
and $h\rho=\mathscr{L}\omega_{0}$ the Lagrangian associated to $\rho$,
where $\mathscr{L}:\mathrm{Imm}\,T_{n}^{1}Q\rightarrow\mathbf{R}$
is the Lagrange function given by \eqref{eq:LagrangianLepage}. Let
$\zeta:U\rightarrow Q$ be an immersion on an open subset $U$ of
$\mathbf{R}^{n}$. Any compact subset $\Omega$ of $U$ associates
with $\rho$ the integral
\begin{equation}
\rho_{\Omega}(\zeta)=\int_{\Omega}(T_{n}^{1}\zeta)^{*}\rho=\int_{\Omega}(\mathscr{L}\circ T_{n}^{1}\zeta)\omega_{0}.\label{eq:Integral-rho}
\end{equation}
Since $\mathscr{L}$ is positive homogeneous (Corollary \ref{Corollary:LagrangeFunctionHomog}),
it follows from Theorem \ref{Theorem:Zermelo} that the integral \eqref{eq:Integral-rho}
is invariant with respect to reparametrizations; this means that
\[
\rho_{\Omega_{1}}(\zeta)=\rho_{\Omega_{2}}(\zeta\circ\mu)
\]
for any orientation-preserving diffeomorphism $\mu$ of $\mathbf{R}^{n}$
and any two compact subsets $\Omega_{1},\Omega_{2}\subset U\subset\mathbf{R}^{n}$
such that $\mu(\Omega_{2})=\Omega_{1}$. Moreover, by Theorem \ref{CorollaryLepage},
(b), it follows that $\rho$ and $W_{h\rho}$ \eqref{eq:FundLepEqZermelo}
define the same integral \eqref{eq:Integral-rho}. Thus, we have
\begin{equation}
\rho_{\Omega}(\zeta)=\int_{\Omega}(G_{n}^{1}\zeta)^{*}W_{h\rho},\label{eq:Integral-W}
\end{equation}
where $G_{n}^{1}\zeta$ is the Grassmann prolongation of $\zeta$
(see \eqref{eq:GrassmannProlong}).

The real-valued function $\zeta\rightarrow\rho_{\Omega}(\zeta)$,
defined on the set of immersions from an open subset $U$ of $\mathbf{R}^{n}$
into $Q$, is called the \textit{variational functional} associated
with $\rho$ and $\Omega$. In the standard sense, we describe the
behaviour of $\rho_{\Omega}$ with respect to variations (deformations)
of $\zeta$ or, more precisely, variations of the image $\zeta(\Omega)\subset Q$.
Following the geometric foundations of the calculus of variations
on fibered spaces (see Krupka \cite{Krupka-GlobalFunctionals}), variations
of $\zeta$ are induced by means of a vector field.

Let $\Xi$ be a vector field on an open subset $W\subset Q$, and
$\alpha_{t}^{\Xi}$ be its local one-parameter group. For every immersion
$\zeta:\mathbf{R}^{n}\supset U\rightarrow Q$ such that $\zeta(U)\subset W$,
$\alpha_{t}^{\Xi}\circ\zeta$ is a one-parameter family of immersions,
which depends smoothly on parameter $t$, and it is called a \textit{variation}
of $\zeta$ induced by the vector field $\Xi$. Since $\Omega$ is
supposed to be compact, the value of the variational integral $\rho_{\Omega}(\alpha_{t}^{\Xi}\circ\zeta)$
\eqref{eq:Integral-rho}, \eqref{eq:Integral-W}, is defined for all
sufficiently small $t$, and we get
\[
\rho_{\Omega}(\alpha_{t}^{\Xi}\circ\zeta)=\int_{\Omega}(G_{n}^{1}(\alpha_{t}^{\Xi}\circ\zeta))^{*}W_{h\rho}=\int_{\Omega}G_{n}^{1}\zeta{}^{*}G_{n}^{1}\alpha_{t}^{\Xi}{}^{*}W_{h\rho},
\]
where the identity \eqref{eq:ProlongationProperty} is applied. Appling
the theorem on differentiating an integral dependent on a parameter,
and with the help of the concept of a Lie derivative of a differential
form, we obtain
\[
\left(\frac{d}{dt}\rho_{\Omega}(\alpha_{t}^{\Xi}\circ\zeta)\right)_{0}=\int_{\Omega}G_{n}^{1}\zeta{}^{*}\left(\frac{d}{dt}G_{n}^{1}\alpha_{t}^{\Xi}{}^{*}W_{h\rho}\right)_{0}=\int_{\Omega}G_{n}^{1}\zeta{}^{*}\partial_{G_{n}^{1}\Xi}W_{h\rho}.
\]
With the notation given by \eqref{eq:Integral-W}, we may denote this
number as
\begin{equation}
(\partial_{G_{n}^{1}\Xi}W_{h\rho})_{\Omega}(\zeta)=\int_{\Omega}G_{n}^{1}\zeta{}^{*}\partial_{G_{n}^{1}\Xi}W_{h\rho}.\label{eq:Variation}
\end{equation}
\eqref{eq:Variation} is called the \textit{variation} of the variational
functional $\zeta\rightarrow\rho_{\Omega}(\zeta)$ \eqref{eq:Integral-W}
at a point $\zeta$, induced by the vector field $\Xi$. The corresponding
functional $\zeta\rightarrow(\partial_{G_{n}^{1}\Xi}W_{h\rho})_{\Omega}(\zeta)$
is the \textit{variational derivative} of $\rho_{\Omega}$ with respect
to $\Xi$.
\begin{thm}
\label{Thm:VarFormula}Let $\rho$ be a $\pi^{1,0}$-horizontal Lepage
form on $\mathrm{Imm}\,T_{n}^{1}Q$, and $W_{h\rho}$ be the fundamental
Lepage equivalent of the Lagrangian $h\rho$. Let $\Xi$ be a vector
field on an open subset $W\subset Q$. Then the variational derivative
$\partial_{G_{n}^{1}\Xi}W_{h\rho}$ of $\rho_{\Omega}$ satisfies:

\textup{(a)} For every immersion $\zeta:\mathbf{R}^{n}\supset U\rightarrow Q$
such that $\zeta(U)\subset W$,
\begin{equation}
G_{n}^{1}\zeta{}^{*}\partial_{G_{n}^{1}\Xi}W_{h\rho}=J^{2}(\mathrm{id}_{\mathbf{R}^{n}}\times\zeta)^{*}i_{J^{2}\Xi}E_{\rho}+d(G_{n}^{1}\zeta^{*}i_{G_{n}^{1}\Xi}W_{h\rho}).\label{eq:FirstVarFormula}
\end{equation}

\textup{(b)} For every compact submanifold $\Omega$ of $U\subset\mathbf{R}^{n}$,
and every immersion $\zeta:\mathbf{R}^{n}\supset U\rightarrow Q$
such that $\zeta(U)\subset W$,
\begin{equation}
\int_{\Omega}G_{n}^{1}\zeta{}^{*}\partial_{G_{n}^{1}\Xi}W_{h\rho}=\int_{\Omega}J^{2}(\mathrm{id}{}_{\mathbf{R}^{n}}\times\zeta)^{*}i_{J^{2}\Xi}E_{\rho}+\int_{\partial\Omega}G_{n}^{1}\zeta^{*}i_{G_{n}^{1}\Xi}W_{h\rho}.\label{eq:IntegralVarFormula}
\end{equation}
\end{thm}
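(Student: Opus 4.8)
The plan is to establish the pointwise identity (a) first; part (b) then follows at once by integrating \eqref{eq:FirstVarFormula} over the compact submanifold $\Omega$ and applying Stokes' theorem $\int_{\Omega}d\beta=\int_{\partial\Omega}\beta$ to the exact term, which produces the boundary integral in \eqref{eq:IntegralVarFormula}. So all the content is in \eqref{eq:FirstVarFormula}. The starting point is the infinitesimal analogue of Stokes' theorem (Cartan's formula): since $\alpha_{t}^{\Xi}$ is the local one-parameter group of $\Xi$ and $G_{n}^{1}\alpha_{t}^{\Xi}$ is a one-parameter family of diffeomorphisms of $G_{n}^{1}Q$ with infinitesimal generator $G_{n}^{1}\Xi$ by \eqref{eq:GrassmannProlongationField}, one has
\[
\partial_{G_{n}^{1}\Xi}W_{h\rho}=i_{G_{n}^{1}\Xi}\,dW_{h\rho}+d\,i_{G_{n}^{1}\Xi}W_{h\rho}.
\]
Pulling back by $G_{n}^{1}\zeta$ and using that the pull-back commutes with the exterior derivative, the second summand becomes exactly $d(G_{n}^{1}\zeta^{*}i_{G_{n}^{1}\Xi}W_{h\rho})$, the exact term of \eqref{eq:FirstVarFormula}. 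It remains to identify $G_{n}^{1}\zeta^{*}i_{G_{n}^{1}\Xi}\,dW_{h\rho}$ with $J^{2}(\mathrm{id}_{\mathbf{R}^{n}}\times\zeta)^{*}i_{J^{2}\Xi}E_{\rho}$.

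To do this I would pass to the jet bundle via the canonical embedding $\mathrm{Imm}\,T_{n}^{1}Q\hookrightarrow J^{1}(\mathbf{R}^{n}\times Q)$ and the quotient projection $\kappa_{n}^{1}$, regarding $W_{h\rho}$ as a $\pi^{1,0}$-horizontal Lepage equivalent of $h\rho=\mathscr{L}\omega_{0}$ on $\mathrm{Imm}\,T_{n}^{1}Q$ (which it is, by Theorem \ref{Theorem:LepageHilbert} together with Theorem \ref{CorollaryLepage}(a)). Since the base $\mathbf{R}^{n}$ is $n$-dimensional, the $(n+1)$-form $dW_{h\rho}$ has vanishing horizontal component, so its canonical contact decomposition reduces to $(\pi^{2,1})^{*}dW_{h\rho}=E_{\rho}+F$, where $E_{\rho}=p_{1}dW_{h\rho}$ is the $1$-contact, $\pi^{2,0}$-horizontal Euler\textendash Lagrange form \eqref{eq:Euler-Lagrange-form} of $\mathscr{L}$ — here I use Theorem \ref{Thm:dLep} adapted to this setting, together with the fact that the Euler\textendash Lagrange form of a Lepage equivalent depends only on its Lagrangian, so $p_{1}dW_{h\rho}=p_{1}d\rho=E_{\rho}$ — and $F$ has order of contactness $\geq 2$. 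The key elementary observation is that contraction with any vector field lowers the order of contactness by at most one, so $i_{J^{2}\Xi}F$ has order of contactness $\geq 1$; consequently $J^{2}(\mathrm{id}_{\mathbf{R}^{n}}\times\zeta)^{*}i_{J^{2}\Xi}F=0$, because pull-backs of contact forms along prolonged immersions vanish. Therefore $G_{n}^{1}\zeta^{*}i_{G_{n}^{1}\Xi}\,dW_{h\rho}=J^{2}(\mathrm{id}_{\mathbf{R}^{n}}\times\zeta)^{*}i_{J^{2}\Xi}(E_{\rho}+F)=J^{2}(\mathrm{id}_{\mathbf{R}^{n}}\times\zeta)^{*}i_{J^{2}\Xi}E_{\rho}$, and combined with the previous paragraph this yields \eqref{eq:FirstVarFormula}.

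The step I expect to require the most care is the bookkeeping that reconciles the three prolongation constructions involved: $G_{n}^{1}\zeta=\kappa_{n}^{1}\circ T_{n}^{1}\zeta$, the field $G_{n}^{1}\Xi$ as the $\kappa_{n}^{1}$-projection of the jet prolongation $J^{1}\Xi$, and the identification of $T_{n}^{1}\zeta$ with $J^{1}(\mathrm{id}_{\mathbf{R}^{n}}\times\zeta)$ under the map $\phi$. One must check that the pull-back identities $G_{n}^{1}\zeta^{*}\circ\kappa_{n}^{1*}=T_{n}^{1}\zeta^{*}$ and $\kappa_{n}^{1*}\circ\partial_{G_{n}^{1}\Xi}=\partial_{J^{1}\Xi}\circ\kappa_{n}^{1*}$ hold (both following from $\kappa_{n}^{1}\circ J^{1}\alpha_{t}^{\Xi}=G_{n}^{1}\alpha_{t}^{\Xi}\circ\kappa_{n}^{1}$, i.e. from \eqref{eq:ProlongationProperty}), and that the passage from the $1$-jet level to the $2$-jet level forced by the presence of $\pi^{2,1}$ in the contact decomposition of $dW_{h\rho}$ and by the $\pi^{2,0}$-horizontality of $E_{\rho}$ is consistent with the pull-backs appearing in \eqref{eq:FirstVarFormula}. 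Once this routine but delicate identification of the various pull-back and Lie-derivative operators is carried out, assembling the pieces gives (a), and integrating over $\Omega$ and applying Stokes' theorem gives (b).
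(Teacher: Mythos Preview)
Your proposal is correct and follows essentially the same route as the paper: Cartan's formula for $\partial_{G_{n}^{1}\Xi}W_{h\rho}$, the decomposition $(\pi^{2,1})^{*}dW_{h\rho}=E_{\rho}+F$ from Theorem~\ref{Thm:dLep}, the observation that $i_{J^{2}\Xi}F$ remains contact and is therefore killed by $J^{2}(\mathrm{id}_{\mathbf{R}^{n}}\times\zeta)^{*}$, and then Stokes' theorem for part~(b). The paper handles your ``bookkeeping'' paragraph with the two compatibility identities $(\pi^{2,1})^{*}i_{G_{n}^{1}\Xi}\eta=i_{J^{2}\Xi}(\pi^{2,1})^{*}\eta$ and $J^{2}(\mathrm{id}_{\mathbf{R}^{n}}\times\zeta)^{*}(\pi^{2,1})^{*}\eta=G_{n}^{1}\zeta^{*}\eta$, which amount to the same reconciliation you describe.
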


\begin{proof}
(a) Suppose $\Xi$ is a vector field on $W\subset Q$. Using the Cartan's
formula, we write the Lie derivative of $W_{h\rho}$ with respect
to $G_{n}^{1}\Xi$ as
\[
\partial_{G_{n}^{1}\Xi}W_{h\rho}=i_{G_{n}^{1}\Xi}dW_{h\rho}+di_{G_{n}^{1}\Xi}W_{h\rho}.
\]
Since $W_{h\rho}$ and $\rho$ are both Lepage equivalents of the
Lagrangian $h\rho$, it follows from Theorem \ref{Thm:dLep} that
$(\pi^{2,1}){}^{*}dW_{h\rho}=E_{\rho}+F$, where $E_{\rho}$ is the
Euler\textendash Lagrange form \eqref{eq:Euler-Lagrange-form}, and
$F$ has the order of contactness $\geq2$. For any form $\eta$ defined
on $G_{n}^{1}Q$, we have $(\pi^{2,1}){}^{*}i_{G_{n}^{1}\Xi}\eta=i_{J^{2}\Xi}(\pi^{2,1}){}^{*}\eta$,
and $J^{2}(\textrm{\ensuremath{\mathrm{\textrm{id}}}}_{\mathbf{R}^{n}}\times\zeta)^{*}(\pi^{2,1}){}^{*}\eta=G_{n}^{1}\zeta^{*}\eta$.
Hence 
\begin{equation}
(\pi^{2,1}){}^{*}\partial_{G_{n}^{1}\Xi}W_{h\rho}=i_{J^{2}\Xi}(E_{\rho}+F)+d((\pi^{2,1}){}^{*}i_{G_{n}^{1}\Xi}W_{h\rho}),\label{eq:FVF}
\end{equation}
and since the pull-back operation $J^{2}(\mathrm{\textrm{id}}_{\mathbf{R}^{n}}\times\zeta)^{*}$
annihilates contact forms, we get \eqref{eq:FirstVarFormula}.

(b) Formula \eqref{eq:IntegralVarFormula} follows from \eqref{eq:FirstVarFormula}
and application of the Stokes' theorem.
\end{proof}
An immersion $\zeta:\mathbf{R}^{n}\supset U\rightarrow Q$ is called
an \textit{\textcolor{black}{extremal}} of the variational functional
$\rho_{\Omega}$ \eqref{eq:Integral-W}, or \eqref{eq:Integral-rho},
\textit{\textcolor{black}{on piece}} $\Omega$, if for every vector
field $\Xi$ such that $\Xi$ vanishes on the boundary $\partial\Omega$
of $\Omega$ along $\zeta$, and $\textrm{supp}\,\Xi\cap\textrm{Im}\,\zeta\subset\zeta(\Omega)$,
the variational derivative of $\rho_{\Omega}$ satisfies
\[
(\partial_{G_{n}^{1}\Xi}W_{h\rho})_{\Omega}(\zeta)=0.
\]
This condition means that the values of the variational functional
$\rho_{\Omega}$ \textit{\textcolor{black}{remain stable}} with respect
to compact deformations of the \textit{\textcolor{black}{immersed
submanifold}} $\textrm{Im}\,\zeta$. The following theorem gives necessary
and sufficient conditions for $\zeta$ to be an extremal of the variational
functional $\rho_{\Omega}$ in terms of the Euler\textendash Lagrange
form $E_{\rho}$ \eqref{eq:Euler-Lagrange-form}.
\begin{thm}
\label{Thm:Euler-Lagrange}Let $\rho$ be a $\pi^{1,0}$-horizontal
Lepage form on $\mathrm{Imm}\,T_{n}^{1}Q$, and $\zeta:\mathbf{R}^{n}\supset U\rightarrow Q$
be an immersion. The following conditions are equivalent:

\textup{\textcolor{black}{(a)}} $\zeta$ is an extremal of $\rho_{\Omega}$.

\textup{\textcolor{black}{(b)}} The Euler\textendash Lagrange form
$E_{\rho}$ satisfies
\[
J^{2}(\mathrm{id}_{\mathbf{R}^{n}}\times\zeta)^{*}E_{\rho}=0.
\]

\textup{\textcolor{black}{(c)}} In any chart $(V,\psi)$ on $Q$,
the following system for $\zeta$ is satisfied
\begin{equation}
E_{K}(\mathscr{L})\circ T_{n}^{2}\zeta=0,\quad1\leq K\leq m+n,\label{eq:EL-equations}
\end{equation}
where $h\rho=\mathscr{L}\omega_{0}$, and the left-hand side of \eqref{eq:EL-equations}
is given by \eqref{eq:E-L-expressions}.
\end{thm}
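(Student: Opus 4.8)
The plan is to derive everything from the integral first variation formula \eqref{eq:IntegralVarFormula} of Theorem \ref{Thm:VarFormula}, together with the classical fundamental lemma of the calculus of variations, establishing the cycle (c) $\Leftrightarrow$ (b), (b) $\Rightarrow$ (a), (a) $\Rightarrow$ (c).

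First I would treat the equivalence (b) $\Leftrightarrow$ (c), which is essentially bookkeeping. Under the canonical identification \eqref{eq:Identification} extended to second order, $J^{2}(\mathrm{id}_{\mathbf{R}^{n}}\times\zeta)$ corresponds to $x\mapsto(x,T_{n}^{2}\zeta(x))$, so for a function $g$ on $\mathrm{Imm}\,T_{n}^{2}Q$ one has $J^{2}(\mathrm{id}\times\zeta)^{*}(g\,\omega_{0})=(g\circ T_{n}^{2}\zeta)\,\omega_{0}$. Since $E_{\rho}=E_{K}(\mathscr{L})\,\omega^{K}\wedge\omega_{0}$ carries the contact factor $\omega^{K}$, reading the condition $J^{2}(\mathrm{id}\times\zeta)^{*}E_{\rho}=0$ as the vanishing of the components of $E_{\rho}$ along the prolongation, it becomes literally the system $E_{K}(\mathscr{L})\circ T_{n}^{2}\zeta=0$, $1\le K\le m+n$; thus (b) and (c) are the same statement in two guises.

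Next, for (b) $\Rightarrow$ (a), I would take a vector field $\Xi$ on $W$ vanishing on $\partial\Omega$ along $\zeta$ and with $\mathrm{supp}\,\Xi\cap\mathrm{Im}\,\zeta\subset\zeta(\Omega)$. Contracting $E_{\rho}=E_{K}(\mathscr{L})\,\omega^{K}\wedge\omega_{0}$ with $J^{2}\Xi$ (which, as $\Xi$ has no base component, has no $\partial/\partial x^{j}$ part) and pulling back along the holonomic prolongation annihilates the $\omega^{K}$-term, leaving $J^{2}(\mathrm{id}\times\zeta)^{*}i_{J^{2}\Xi}E_{\rho}=(\Xi^{K}\circ\zeta)\,(E_{K}(\mathscr{L})\circ T_{n}^{2}\zeta)\,\omega_{0}$, which is zero by (b). The boundary term in \eqref{eq:IntegralVarFormula} vanishes as well: $W_{h\rho}$ \eqref{eq:FundLepEqZermelo} has only the differentials $dy^{K}$ (equivalently $dw^{i},dw^{\sigma}$ in the adapted chart) as exterior factors, so $i_{G_{n}^{1}\Xi}W_{h\rho}$ involves only the components $\Xi^{K}$ of $G_{n}^{1}\Xi$, and these vanish along $\zeta$ on $\partial\Omega$; hence the boundary integrand $G_{n}^{1}\zeta^{*}i_{G_{n}^{1}\Xi}W_{h\rho}$ vanishes on $\partial\Omega$. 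Therefore \eqref{eq:IntegralVarFormula} gives $(\partial_{G_{n}^{1}\Xi}W_{h\rho})_{\Omega}(\zeta)=0$, i.e. $\zeta$ is an extremal.

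Finally, for (a) $\Rightarrow$ (c): if $\zeta$ is an extremal, then by \eqref{eq:IntegralVarFormula} and the computation above, $\int_{\Omega}(\Xi^{K}\circ\zeta)\,(E_{K}(\mathscr{L})\circ T_{n}^{2}\zeta)\,\omega_{0}=0$ for every admissible $\Xi$. Fixing an interior point $x_{0}\in\Omega$, I would use that $\zeta$, being an immersion, is an embedding near $x_{0}$: after shrinking $U$ one finds a chart on $Q$ around $\zeta(x_{0})$ in which $\mathrm{Im}\,\zeta$ is a coordinate slice, and for each index $L$ and each bump function $\varphi$ supported near $x_{0}$ one constructs a vector field $\Xi$ with small support on $Q$ such that $\Xi^{K}\circ\zeta=\varphi\,\delta_{L}^{K}$. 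The fundamental lemma of the calculus of variations then forces $E_{L}(\mathscr{L})\circ T_{n}^{2}\zeta=0$ at $x_{0}$; since $x_{0}$ and $L$ are arbitrary, (c) follows, hence also (b). The step I expect to be the main obstacle is this last construction: manufacturing from a test function on $U$ a genuine vector field on the open set $W\subset Q$ whose restriction along $\zeta$ realizes prescribed component data, while keeping $\mathrm{supp}\,\Xi\cap\mathrm{Im}\,\zeta\subset\zeta(\Omega)$. This is precisely where immersivity of $\zeta$ enters — it makes $\mathrm{Im}\,\zeta$ locally embedded, so a tubular-neighbourhood and partition-of-unity argument produces such $\Xi$; the support constraint is automatic once the variation is localized near interior points of $\Omega$.
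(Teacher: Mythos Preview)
Your proof is correct and is precisely the standard argument the paper alludes to; the paper itself omits the proof entirely, saying only that it follows from Theorem~\ref{Thm:VarFormula}(a) and is a standard result of global variational theory on fibered manifolds. Your write-up simply fills in those standard details (using the integrated formula \eqref{eq:IntegralVarFormula} rather than the pointwise one \eqref{eq:FirstVarFormula}, which is immaterial here).
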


\begin{proof}
We omit the proof since it follows from Theorem \ref{Thm:VarFormula},
(a), and it is a standard result of the~global variational theory
on fibered manifolds.
\end{proof}
The system of second-order partial differential equations \eqref{eq:EL-equations}
for $\zeta$ is called the \textit{\textcolor{black}{Euler\textendash Lagrange
equations}}, associated with the Lepage form $\rho$.

\subsection{Invariant Lepage forms on $G_{n}^{1}Q$ and the Noether's theorem}

Let $W$ be an open subset of $Q$, and $\tilde{W}^{1}=(\tau_{n,G}^{1,0})^{-1}(W)\subset G_{n}^{1}Q$.
Let $\eta$ be an $n$-form defined on $\tilde{W}^{1}$, and let $\alpha:W\rightarrow Q$
be a diffeomorphism. We say that $\eta$ is \textit{\textcolor{black}{invariant}}
with respect to $\alpha$, if 
\begin{equation}
(G_{n}^{1}\alpha)^{*}\eta=\eta\quad\textrm{mod}\,\tilde{\Theta}^{1}W,\label{eq:Invariance}
\end{equation}
where $G_{n}^{1}\alpha$ is the Grassmann prolongation of $\alpha$
(see \eqref{eq:GrassmannProlongationDiffeo}), and $\tilde{\Theta}^{1}W$
is the contact ideal (Sec. \ref{sec:3}). If condition \eqref{eq:Invariance}
is satisfied, $\alpha$ is also said to be an \textit{\textcolor{black}{invariance
transformation}} of $\eta$. It is straightforward to apply this definition
to vector fields by means of their one-parameter groups. We say that
a vector field $\Xi$ on $W$ is a \textit{\textcolor{black}{generator
of invariance transformations}} of the form $\eta$, if the one-parameter
group $\alpha_{t}^{\Xi}$ of $\Xi$ consists of invariance transformations
of $\eta$. 

Denote by $\alpha^{\Xi}$ the \textit{\textcolor{black}{global flow}}
of a vector field $\Xi$. We have the following basic formula applied
to the Grassmann prolongation of a vector field.
\begin{lem}
Let $\eta$ be an $n$-form on $\tilde{W}^{1}\subset G_{n}^{1}Q$,
and $\Xi$ be a vector field on $W$. Then
\begin{equation}
\frac{d}{dt}(G_{n}^{1}\alpha_{t}^{\Xi})^{*}\eta([J_{0}^{1}\zeta])=(G_{n}^{1}\alpha_{t}^{\Xi})^{*}\partial_{G_{n}^{1}\Xi}\eta([J_{0}^{1}\zeta]),\label{eq:LieDerivativeVectorField}
\end{equation}
for every point $(t,[J_{0}^{1}\zeta])$ of the domain of the flow
$G_{n}^{1}\alpha^{\Xi}$.
\end{lem}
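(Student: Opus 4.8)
The plan is to reduce \eqref{eq:LieDerivativeVectorField} to the standard identity $\frac{d}{dt}\phi_t^{*}\eta=\phi_t^{*}\partial_{Z}\eta$, valid for the (local) flow $\phi_t$ of an arbitrary vector field $Z$ on a smooth manifold. For this it suffices to check two facts: that $t\mapsto G_n^1\alpha_t^{\Xi}$ is a local one-parameter group of diffeomorphisms of $G_n^1 Q$, and that $G_n^1\Xi$, as defined by \eqref{eq:GrassmannProlongationField}, is exactly its infinitesimal generator.

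First I would establish the functoriality of the Grassmann prolongation on diffeomorphisms. From the definition \eqref{eq:GrassmannProlongationDiffeo}, for any two composable diffeomorphisms $\alpha,\beta$ of open subsets of $Q$ and any regular velocity $J_0^1\zeta$,
\[
(G_n^1\alpha\circ G_n^1\beta)([J_0^1\zeta])=G_n^1\alpha([J_0^1(\beta\circ\zeta)])=[J_0^1(\alpha\circ\beta\circ\zeta)]=G_n^1(\alpha\circ\beta)([J_0^1\zeta]),
\]
the same computation that underlies \eqref{eq:ProlongationProperty}. Applying this to the local one-parameter group $\alpha_t^{\Xi}$ of $\Xi$ gives $G_n^1\alpha_{t+s}^{\Xi}=G_n^1\alpha_t^{\Xi}\circ G_n^1\alpha_s^{\Xi}$ on the common domain, together with $G_n^1\alpha_0^{\Xi}=\mathrm{id}$; smoothness in $(t,[J_0^1\zeta])$ follows because in the $(i)$-subordinate charts $G_n^1\alpha$ is built from $\alpha$ and its first derivatives (as in the derivation of Lemma \ref{Lem:VectorFieldProlong}), so the smooth dependence of $\alpha_t^{\Xi}$ on $t$ carries over. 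Thus $t\mapsto G_n^1\alpha_t^{\Xi}$ is the local flow $G_n^1\alpha^{\Xi}$ of a well-defined vector field on $G_n^1 Q$, and that vector field is $G_n^1\Xi$ by the very definition \eqref{eq:GrassmannProlongationField}.

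Given these two facts, the lemma is immediate. Fix $t_0$ in the domain of the flow; the group property yields $G_n^1\alpha_{t_0+s}^{\Xi}=G_n^1\alpha_{t_0}^{\Xi}\circ G_n^1\alpha_s^{\Xi}$, so by functoriality of the pull-back
\[
\left(\frac{d}{dt}(G_n^1\alpha_t^{\Xi})^{*}\eta\right)_{t_0}=(G_n^1\alpha_{t_0}^{\Xi})^{*}\left(\frac{d}{ds}(G_n^1\alpha_s^{\Xi})^{*}\eta\right)_{0}=(G_n^1\alpha_{t_0}^{\Xi})^{*}\partial_{G_n^1\Xi}\eta,
\]
where the last equality is just the definition of the Lie derivative $\partial_{G_n^1\Xi}\eta$ as the derivative at $s=0$ of $(G_n^1\alpha_s^{\Xi})^{*}\eta$. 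Evaluating at $[J_0^1\zeta]$ gives \eqref{eq:LieDerivativeVectorField}. I do not anticipate a genuine obstacle; the only point needing care is that $\alpha_t^{\Xi}$, and hence $G_n^1\alpha_t^{\Xi}$, is in general only a \emph{local} one-parameter group, so all the identities above are to be read on the open subset of $\mathbf{R}\times G_n^1 Q$ on which the flow $G_n^1\alpha^{\Xi}$ is defined, exactly as in the statement.
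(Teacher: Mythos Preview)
Your proof is correct and follows essentially the same approach as the paper: the paper's proof merely says the formula ``is obtained by a straightforward differentiating of the curve $t\rightarrow(G_{n}^{1}\alpha_{t}^{\Xi})^{*}\eta([J_{0}^{1}\zeta])$ at an arbitrary point'', and you have carried out exactly that differentiation, supplying the underlying group-law justification $G_n^1\alpha_{t+s}^{\Xi}=G_n^1\alpha_t^{\Xi}\circ G_n^1\alpha_s^{\Xi}$ that the paper leaves implicit.
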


\begin{proof}
Formula \eqref{eq:LieDerivativeVectorField} is obtained by a straightforward
differentiating of the curve $t\rightarrow(G_{n}^{1}\alpha_{t}^{\Xi})^{*}\eta([J_{0}^{1}\zeta])$
at an arbitrary point $(t,[J_{0}^{1}\zeta])$ of the domain of $G_{n}^{1}\alpha^{\Xi}$.
\end{proof}
The following result extends the classical \textit{\textcolor{black}{Noether's
equation}} to the Grassmann fibrations.
\begin{thm}
\label{thm:NoetherEq}Let $\eta$ be an $n$-form on $\tilde{W}^{1}\subset G_{n}^{1}Q$,
and $\Xi$ be a vector field on $W$. The following two conditions
are equivalent:

\textup{\textcolor{black}{(a)}} $\Xi$ is the generator of invariance
transformations of $\eta$.

\textup{\textcolor{black}{(b)}} The Lie derivative of $\eta$ with
respect to vector field $G_{n}^{1}\Xi$ satisfies
\[
\partial_{G_{n}^{1}\Xi}\eta=0\quad\mathrm{mod}\,\tilde{\Theta}^{1}W.
\]
\end{thm}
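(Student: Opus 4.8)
The plan is to derive both implications from the flow identity \eqref{eq:LieDerivativeVectorField} of the preceding lemma, the composition rule $G_{n}^{1}(\alpha\circ\zeta)=G_{n}^{1}\alpha\circ G_{n}^{1}\zeta$ from \eqref{eq:ProlongationProperty}, and the fact that $\beta\equiv 0\ \mathrm{mod}\ \tilde{\Theta}^{1}W$ is equivalent to $(G_{n}^{1}\zeta)^{*}\beta=0$ for every immersion $\zeta\colon U\rightarrow W$ (for $n$-forms this is the structure description of contact forms, the $G_{n}^{1}Q$-analogue of the statement recalled in Section~\ref{sec:2}). A preliminary observation I would record first is that the Grassmann prolongation of any diffeomorphism preserves the contact ideal: since $G_{n}^{1}\alpha$ is an exterior-algebra automorphism commuting with $d$, it is enough to check $(G_{n}^{1}\alpha)^{*}\tilde{\omega}^{\sigma}\in\tilde{\Theta}^{1}W$, and this holds because $(G_{n}^{1}\zeta)^{*}(G_{n}^{1}\alpha)^{*}\tilde{\omega}^{\sigma}=(G_{n}^{1}(\alpha\circ\zeta))^{*}\tilde{\omega}^{\sigma}=0$ for every immersion $\zeta$, so $(G_{n}^{1}\alpha)^{*}\tilde{\omega}^{\sigma}$ is a contact $1$-form.

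For (a)$\,\Rightarrow\,$(b) I would assume $(G_{n}^{1}\alpha_{t}^{\Xi})^{*}\eta\equiv\eta\ \mathrm{mod}\ \tilde{\Theta}^{1}W$ for all $t$ in the domain of the flow. Fixing an immersion $\zeta$ and pulling back by $G_{n}^{1}\zeta$, the contact terms vanish and \eqref{eq:ProlongationProperty} gives $(G_{n}^{1}(\alpha_{t}^{\Xi}\circ\zeta))^{*}\eta=(G_{n}^{1}\zeta)^{*}\eta$, independent of $t$. Writing the left-hand side as $(G_{n}^{1}\zeta)^{*}(G_{n}^{1}\alpha_{t}^{\Xi})^{*}\eta$ and differentiating at $t=0$ by means of \eqref{eq:LieDerivativeVectorField} yields $(G_{n}^{1}\zeta)^{*}\partial_{G_{n}^{1}\Xi}\eta=0$; since $\zeta$ is arbitrary, $\partial_{G_{n}^{1}\Xi}\eta$ is contact, i.e. $\partial_{G_{n}^{1}\Xi}\eta\equiv 0\ \mathrm{mod}\ \tilde{\Theta}^{1}W$.

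For (b)$\,\Rightarrow\,$(a) I would assume $\partial_{G_{n}^{1}\Xi}\eta\in\tilde{\Theta}^{1}W$, fix an arbitrary immersion $\zeta$, and set $g(t)=(G_{n}^{1}(\alpha_{t}^{\Xi}\circ\zeta))^{*}\eta=(G_{n}^{1}\zeta)^{*}(G_{n}^{1}\alpha_{t}^{\Xi})^{*}\eta$. Differentiating and using \eqref{eq:LieDerivativeVectorField} gives $g'(t)=(G_{n}^{1}\zeta)^{*}(G_{n}^{1}\alpha_{t}^{\Xi})^{*}\partial_{G_{n}^{1}\Xi}\eta$; by the preliminary observation $(G_{n}^{1}\alpha_{t}^{\Xi})^{*}\partial_{G_{n}^{1}\Xi}\eta$ again lies in $\tilde{\Theta}^{1}W$, hence is contact, hence is annihilated by $(G_{n}^{1}\zeta)^{*}$, so $g'\equiv 0$ and $g(t)=g(0)=(G_{n}^{1}\zeta)^{*}\eta$. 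As $\zeta$ ranges over all immersions this says exactly $(G_{n}^{1}\alpha_{t}^{\Xi})^{*}\eta\equiv\eta\ \mathrm{mod}\ \tilde{\Theta}^{1}W$ for every $t$, i.e. $\Xi$ is a generator of invariance transformations of $\eta$.

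The only genuinely delicate point is the equivalence between membership in the contact ideal $\tilde{\Theta}^{1}W$ and the vanishing of all pullbacks $(G_{n}^{1}\zeta)^{*}(\cdot)$ along Grassmann prolongations of immersions; this is the transcription to $G_{n}^{1}Q$ of the standard structure theorem for contact forms on jet prolongations quoted in Section~\ref{sec:2}, and I would invoke it rather than reprove it. Once this is granted, the argument is a short manipulation of the Cartan formula and the Lie-derivative formalism already in place, and the phrasing entirely through pullbacks by $G_{n}^{1}\zeta$ avoids having to track the coordinate decomposition of $t$-dependent families of contact forms.
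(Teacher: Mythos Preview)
Your proposal is correct and considerably more detailed than the paper itself, which merely cites the analogous result in Urban--Krupka \cite{UK-IJGMMP} and calls the extension straightforward. Your two-direction argument via the flow identity \eqref{eq:LieDerivativeVectorField}, the composition rule \eqref{eq:ProlongationProperty}, and pullbacks along $G_{n}^{1}\zeta$ is exactly the kind of argument one would expect under that citation.

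One small remark on the direction (a)$\Rightarrow$(b). You deduce $(G_{n}^{1}\zeta)^{*}\partial_{G_{n}^{1}\Xi}\eta=0$ for all immersions $\zeta$ and then invoke the equivalence ``contact $\Leftrightarrow$ lies in $\tilde{\Theta}^{1}W$'' to conclude $\partial_{G_{n}^{1}\Xi}\eta\in\tilde{\Theta}^{1}W$. This is fine, but you can avoid invoking that structure result altogether: since $\tilde{\Theta}^{1}W$ is an ideal locally generated by the fixed forms $\tilde{\omega}^{\sigma}$ and $d\tilde{\omega}^{\sigma}$, a smooth one-parameter family $\beta_{t}\in\tilde{\Theta}^{1}W$ has $\left.\tfrac{d}{dt}\right|_{0}\beta_{t}\in\tilde{\Theta}^{1}W$ (differentiate the coefficients in the local generators). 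Applying this to $\beta_{t}=(G_{n}^{1}\alpha_{t}^{\Xi})^{*}\eta-\eta$ gives $\partial_{G_{n}^{1}\Xi}\eta\in\tilde{\Theta}^{1}W$ directly. Either route is acceptable; the point is only that the ``delicate'' equivalence you flag is not actually needed for this implication. Your handling of (b)$\Rightarrow$(a), including the preliminary observation that $G_{n}^{1}\alpha$ preserves the contact ideal, is clean and uses only the easy direction (ideal membership implies vanishing pullback).
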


\begin{proof}
The proof is a straightforward extension of an analogous result; see
Urban and Krupka \cite{UK-IJGMMP}.
\end{proof}
\begin{cor}
Let $\rho$ be a $\pi^{1,0}$-horizontal Lepage form on $\mathrm{Imm}\,T_{n}^{1}Q$,
and $W_{h\rho}$ be the fundamental Lepage equivalent of the Lagrangian
$h\rho$. Let $\Xi$ be the generator of invariance transformations
of $W_{h\rho}$. Then
\begin{equation}
i_{J_{n}^{2}\Xi}E_{\rho}+d((\pi^{2,1}){}^{*}i_{G_{n}^{1}\Xi}W_{h\rho})=0\quad\mathrm{mod}\,\Theta^{2}W,\label{eq:FVF-invariance}
\end{equation}
where $\Theta^{2}W$ is the ideal of forms, locally generated by contact
$1$-forms $\omega^{K}=dy^{K}-y_{l}^{K}dx^{l}$, and $\omega_{j}^{K}=dy_{j}^{K}-y_{jl}^{K}dx^{l}$,
on a chart neighborhood of $J^{2}(\mathbf{R}^{n}\times Q)$.
\end{cor}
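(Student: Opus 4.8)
The plan is to deduce the corollary from the identity \eqref{eq:FVF} established in the proof of Theorem~\ref{Thm:VarFormula}(a), combined with the Noether equation of Theorem~\ref{thm:NoetherEq}. Recall that $W_{h\rho}$ is a Lepage form by Theorem~\ref{CorollaryLepage}(a), and that both $W_{h\rho}$ and $\rho$ are Lepage equivalents of the Lagrangian $h\rho=\mathscr{L}\omega_{0}$; hence Cartan's formula together with Theorem~\ref{Thm:dLep} yields
\[
(\pi^{2,1})^{*}\partial_{G_{n}^{1}\Xi}W_{h\rho}=i_{J_{n}^{2}\Xi}E_{\rho}+i_{J_{n}^{2}\Xi}F+d\bigl((\pi^{2,1})^{*}i_{G_{n}^{1}\Xi}W_{h\rho}\bigr),
\]
where $E_{\rho}$ is the Euler--Lagrange form \eqref{eq:Euler-Lagrange-form} attached to $\rho$ and $F$ has order of contactness $\geq 2$. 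Comparing with \eqref{eq:FVF-invariance}, it then suffices to show that the two terms $(\pi^{2,1})^{*}\partial_{G_{n}^{1}\Xi}W_{h\rho}$ and $i_{J_{n}^{2}\Xi}F$ vanish modulo $\Theta^{2}W$.

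First I would dispose of $i_{J_{n}^{2}\Xi}F$: since $F$ has order of contactness $\geq 2$ it lies in the square of the contact ideal, and contraction with a vector field lowers the order of contactness by at most one, so $i_{J_{n}^{2}\Xi}F$ still lies in $\Theta^{2}W$. Next I would invoke the hypothesis through Theorem~\ref{thm:NoetherEq}: $\Xi$ being a generator of invariance transformations of $W_{h\rho}$ is equivalent to $\partial_{G_{n}^{1}\Xi}W_{h\rho}=0\ \mathrm{mod}\ \tilde{\Theta}^{1}W$, i.e.\ $\partial_{G_{n}^{1}\Xi}W_{h\rho}$ belongs to the contact ideal on $G_{n}^{1}Q$ generated by the forms $\tilde{\omega}^{\sigma}$ and $d\tilde{\omega}^{\sigma}$. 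Finally I would transport this membership to $J^{2}(\mathbf{R}^{n}\times Q)$: from $(\pi^{2,1})^{*}\tilde{\omega}^{\sigma}=\omega^{\sigma}-z_{i}^{j}y_{j}^{\sigma}\omega^{i}$ (computed in Section~\ref{sec:3}) and $d\omega^{K}=-\omega_{l}^{K}\wedge dx^{l}$, both $(\pi^{2,1})^{*}\tilde{\omega}^{\sigma}$ and $(\pi^{2,1})^{*}d\tilde{\omega}^{\sigma}$ lie in $\Theta^{2}W$; hence $(\pi^{2,1})^{*}\tilde{\Theta}^{1}W\subset\Theta^{2}W$, so that $(\pi^{2,1})^{*}\partial_{G_{n}^{1}\Xi}W_{h\rho}=0\ \mathrm{mod}\ \Theta^{2}W$. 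Substituting both facts into the displayed identity gives exactly \eqref{eq:FVF-invariance}.

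I do not expect a genuine obstacle here: all the analytic input---the structure of $dW_{h\rho}$ (Theorem~\ref{Thm:dLep}), the first variation identity (Theorem~\ref{Thm:VarFormula}), and the Noether equation (Theorem~\ref{thm:NoetherEq})---is already available, and what remains is the careful bookkeeping of the three ideals involved (the contact ideal $\tilde{\Theta}^{1}W$ on $G_{n}^{1}Q$, the order-of-contactness filtration on $J^{1}$ resp.\ $J^{2}$, and the ideal $\Theta^{2}W$ on $J^{2}(\mathbf{R}^{n}\times Q)$) together with the inclusions among their pull-backs.
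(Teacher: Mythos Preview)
Your proposal is correct and follows exactly the route the paper takes: the paper's own proof is a single sentence stating that, using Theorem~\ref{thm:NoetherEq}, formula~\eqref{eq:FVF-invariance} is just the first variation formula~\eqref{eq:FVF} specialized to invariant Lepage forms. You have simply made explicit the two ideal-theoretic facts the paper leaves tacit---that $i_{J^{2}\Xi}F\in\Theta^{2}W$ because contraction lowers the order of contactness by at most one, and that $(\pi^{2,1})^{*}\tilde{\Theta}^{1}W\subset\Theta^{2}W$ via the identities $(\pi^{2,1})^{*}\tilde{\omega}^{\sigma}=\omega^{\sigma}-z_{i}^{j}y_{j}^{\sigma}\omega^{i}$ and $(\pi^{2,1})^{*}d\omega^{K}=-\omega_{l}^{K}\wedge dx^{l}$.
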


\begin{proof}
Using Theorem \ref{thm:NoetherEq}, formula \eqref{eq:FVF-invariance}
is a restatement of the first variation formula \eqref{eq:FVF} for
invariant Lepage forms.
\end{proof}
Let $\zeta:\mathbf{R}^{n}\supset U\rightarrow W\subset Q$ be an immersion.
A real-valued function $\phi:G_{n}^{1}Q\supset\tilde{W}^{1}\rightarrow\mathbf{R}$
is called a \textit{\textcolor{black}{level set function}} for the
immersion $\zeta$, if $d(\phi\circ G_{n}^{1}\zeta)=0$. Now we follow
the geometric ideas of the theory of higher-order variational functionals
on fibered manifolds (see Krupka \cite{Krupka-Lepage}), and we extend
the classical Noether's theorem to Grassmann fibrations.
\begin{thm}
\label{Thm:Noether}Let $\rho$ be a $\pi^{1,0}$-horizontal Lepage
form on $\mathrm{Imm}\,T_{n}^{1}Q$, and $W_{h\rho}$ be the fundamental
Lepage equivalent of the Lagrangian $h\rho$. Suppose an immersion
$\zeta:\mathbf{R}^{n}\supset U\rightarrow Q$ be an extremal of the
variational functional $\rho_{\Omega}$ \eqref{eq:Integral-W}. Then
for every generator $\Xi$ of invariance transformations of $W_{h\rho}$,
\begin{equation}
d(G_{n}^{1}\zeta^{*}i_{G_{n}^{1}\Xi}W_{h\rho})=0.\label{eq:NoetherCurrent}
\end{equation}
\end{thm}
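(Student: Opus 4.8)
The plan is to derive \eqref{eq:NoetherCurrent} directly from the infinitesimal first variation formula of Theorem~\ref{Thm:VarFormula}(a), feeding in the characterization of invariance transformations (Theorem~\ref{thm:NoetherEq}) on the left-hand side and the characterization of extremals (Theorem~\ref{Thm:Euler-Lagrange}) into the Euler--Lagrange term.

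First I would apply Theorem~\ref{Thm:VarFormula}(a) to the vector field $\Xi$ and the immersion $\zeta$, obtaining
\[
G_{n}^{1}\zeta{}^{*}\partial_{G_{n}^{1}\Xi}W_{h\rho}=J^{2}(\mathrm{id}_{\mathbf{R}^{n}}\times\zeta)^{*}i_{J^{2}\Xi}E_{\rho}+d\bigl(G_{n}^{1}\zeta^{*}i_{G_{n}^{1}\Xi}W_{h\rho}\bigr).
\]
It then suffices to show that the left-hand side vanishes and that the first summand on the right-hand side vanishes; the remaining term $d(G_{n}^{1}\zeta^{*}i_{G_{n}^{1}\Xi}W_{h\rho})$ is thereby forced to be zero, which is exactly \eqref{eq:NoetherCurrent}.

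For the left-hand side: by hypothesis $\Xi$ is a generator of invariance transformations of $W_{h\rho}$, so Theorem~\ref{thm:NoetherEq} gives $\partial_{G_{n}^{1}\Xi}W_{h\rho}=0\bmod\tilde{\Theta}^{1}W$, that is, $\partial_{G_{n}^{1}\Xi}W_{h\rho}$ lies in the contact ideal on $G_{n}^{1}Q$. By the definition of contact forms on the Grassmann fibration (Section~\ref{sec:3}), every Grassmann prolongation $G_{n}^{1}\zeta$ pulls such forms back to zero, hence $G_{n}^{1}\zeta{}^{*}\partial_{G_{n}^{1}\Xi}W_{h\rho}=0$. For the Euler--Lagrange term: using the chart expression \eqref{eq:Euler-Lagrange-form}, $E_{\rho}=E_{K}(\mathscr{L})\,\omega^{K}\wedge\omega_{0}$, and contracting with $J^{2}\Xi$ --- which, viewed as a vector field on $\mathbf{R}^{n}\times Q$, has no $\partial/\partial x^{l}$ component --- yields $i_{J^{2}\Xi}E_{\rho}=E_{K}(\mathscr{L})\,\Xi^{K}\,\omega_{0}$. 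Since $\zeta$ is an extremal of $\rho_{\Omega}$, Theorem~\ref{Thm:Euler-Lagrange} gives $E_{K}(\mathscr{L})\circ T_{n}^{2}\zeta=0$ for every $K$, whence $J^{2}(\mathrm{id}_{\mathbf{R}^{n}}\times\zeta)^{*}i_{J^{2}\Xi}E_{\rho}=(E_{K}(\mathscr{L})\circ T_{n}^{2}\zeta)(\Xi^{K}\circ\zeta)\,\omega_{0}=0$.

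Substituting these two vanishing contributions into the first variation formula leaves $d(G_{n}^{1}\zeta^{*}i_{G_{n}^{1}\Xi}W_{h\rho})=0$, as claimed. The argument is the expected ``symmetry plus extremal implies conservation law'' consequence of the first variation formula, and no new construction is required. The only points needing some care are the bookkeeping with the two distinct contact ideals --- $\tilde{\Theta}^{1}W$ on $G_{n}^{1}Q$ and $\Theta^{2}W$ on $J^{2}(\mathbf{R}^{n}\times Q)$ --- and the verification that the pull-backs $G_{n}^{1}\zeta^{*}$ and $J^{2}(\mathrm{id}_{\mathbf{R}^{n}}\times\zeta)^{*}$ annihilate them, which is precisely what the preparatory material of Sections~\ref{sec:3} and~\ref{sec:5-1} supplies.
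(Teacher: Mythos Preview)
Your proof is correct and follows essentially the same route as the paper: the paper invokes the first variation formula in its invariant form \eqref{eq:FVF-invariance} and then pulls back along the extremal, while you start from the already pulled-back version \eqref{eq:FirstVarFormula} and handle the invariance and extremal conditions separately via Theorems~\ref{thm:NoetherEq} and~\ref{Thm:Euler-Lagrange}. The logic and ingredients are identical; only the packaging differs.
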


\begin{proof}
\eqref{eq:NoetherCurrent} follows from the first variation formula
\eqref{eq:FVF-invariance} for invariant Lepage forms.
\end{proof}

\section{Example: Variational functional for minimal submanifolds \label{sec:6}}

Suppose $Q$ is a smooth manifold of dimension $n+m$, endowed with
a Riemannian metric $g$, and let $\zeta:X\rightarrow Q$ be an immersion
defined on an $n$-dimensional smooth manifold $X$. Let $(U,\varphi)$,
$\varphi=(x^{j})$, be a chart on $X$, and $(V,\psi)$, $\psi=(y^{K})$,
be a chart on $Q$ such that $\zeta(U)\subset V$. If $g$ has the
expression $g=g_{KL}dy^{K}\otimes dy^{L}$, then the \textit{\textcolor{black}{induced}}
Riemannian metric $\zeta^{*}g$ on $X$ is expressed by the formula
\[
\zeta^{*}g=(g_{KL}\circ\zeta)\frac{\partial(y^{K}\circ\zeta)}{\partial x^{j}}\frac{\partial(y^{L}\circ\zeta)}{\partial x^{k}}dx^{j}\otimes dx^{k},
\]
and the \textit{\textcolor{black}{induced}} Riemannian volume element
on $X$ reads
\[
\omega_{\zeta,g}=\sqrt{\det\left((g_{KL}\circ\zeta)\frac{\partial(y^{K}\circ\zeta)}{\partial x^{j}}\frac{\partial(y^{L}\circ\zeta)}{\partial x^{k}}\right)}\,dx^{1}\wedge dx^{2}\wedge\ldots\wedge dx^{n}.
\]

Consider now $X=\mathbf{R}^{n}$ with the global canonical coordinates
$x^{j}$, $1\leq j\leq n$. The Lagrangian for the problem of $n$-dimensional
\textit{\textcolor{black}{minimal submanifolds}} $\lambda=\mathscr{L}\omega_{0}$,
is defined by the positive-homogeneous function $\mathscr{L}:\textrm{Imm}\,T_{n}^{1}Q\rightarrow\mathbf{R}$,
\begin{equation}
\mathscr{L}(y^{K},y_{j}^{K})=\sqrt{\det(g_{KL}y_{j}^{K}y_{k}^{L})}.\label{eq:MinSubLagrange}
\end{equation}
Assigning to $\lambda$ the fundamental Lepage equivalent $W_{\lambda}$
\eqref{eq:FundLepEqZermelo}, we have the corresponding variational
functional (see \eqref{eq:Integral-W}) for immersions $\zeta$ of
$\mathbf{R}^{n}$ into $Q$, associated with a compact subset $\Omega$
of $\mathbf{R}^{n}$,
\begin{equation}
\zeta\rightarrow\rho_{\Omega}(\zeta)=\int_{\Omega}(G_{n}^{1}\zeta)^{*}W_{\lambda}.\label{eq:VarFuncMinSub}
\end{equation}

In \cite{Krupka-GlobalFunctionals}, Krupka proposed the $n$-form
$\omega$, given in a chart $(V,\psi)$ by
\begin{equation}
\omega_{\lambda}=\frac{1}{n!}\frac{1}{\mathscr{L}}g_{K_{1}L_{1}}g_{K_{2}L_{2}}\ldots g_{K_{n}L_{n}}D^{L_{1}L_{2}\ldots L_{n}}dy^{K_{1}}\land dy^{K_{2}}\wedge\ldots\wedge dy^{K_{n}},\label{eq:KrupkaForm}
\end{equation}
where $D^{L_{1}L_{2}\ldots L_{n}}=\det(y_{k}^{L_{j}})$, $1\leq j,k\leq n$,
which can be regarded as a Lagrangian for the problem of minimal submanifolds,
and satisfies the condition $T_{n}^{1}\zeta^{*}\omega_{\lambda}=\omega_{\zeta,g}$.
Note that $\omega_{\lambda}$ is a Lepage form on $\textrm{Imm}\,T_{n}^{1}Q$,
as introduced in Section \ref{sec:5-1}.
\begin{thm}
\label{Thm:MinSubLagran}Let $\lambda\in\Omega_{n,\mathbf{R}^{n}}^{1}(\mathbf{R}^{n}\times Q)$,
$\lambda=\mathscr{L}\omega_{0},$ be the Lagrangian of order $1$,
given by \eqref{eq:MinSubLagrange}. Then the Lepage equivalents of
$\lambda$, $W_{\lambda}$ \eqref{eq:FundLepEqZermelo}, $\Lambda_{\lambda}$
\eqref{eq:Hilbert-CaratheodoryForm}, and $\omega_{\lambda}$ \eqref{eq:KrupkaForm}
coincide, and are defined on the Grassmann fibration $G_{n}^{1}Q$.
\end{thm}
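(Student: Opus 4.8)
The plan is to compute the three $n$-forms explicitly from their defining formulas for the specific Lagrange function $\mathscr{L}=\sqrt{\det(g_{KL}y^K_j y^L_k)}$, and to show that the coefficients of $dy^{K_1}\wedge\ldots\wedge dy^{K_n}$ agree. Since Theorem \ref{Theorem:LepageHilbert} already guarantees that $W_\lambda$ has the reduced form \eqref{eq:FundLepEqZermelo} and is defined on $G_n^1Q$, and since $\omega_\lambda$ is manifestly $\pi^{1,0}$-horizontal and of the shape \eqref{eq:horizontalForm}, it suffices to verify two coefficient identities, after which the statement ``defined on $G_n^1Q$'' follows automatically: a $\pi^{1,0}$-horizontal Lepage form on $\mathrm{Imm}\,T_n^1Q$ whose Lagrange function satisfies the Zermelo conditions (which $\mathscr{L}$ does, being positive homogeneous — Corollary \ref{Corollary:LagrangeFunctionHomog} read in reverse, or directly) coincides with its own $W_{h\rho}$, hence descends to the Grassmann fibration by Theorem \ref{Theorem:LepageHilbert}.

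First I would treat $\omega_\lambda$ versus $W_\lambda$. The key computational input is the $n$-th derivative $\partial^n\mathscr{L}/\partial y^{K_1}_{j_1}\cdots\partial y^{K_n}_{j_n}$. Writing $G_{jk}=g_{KL}y^K_j y^L_k$ and $\mathscr{L}=\sqrt{\det G}$, one has $\partial\mathscr{L}/\partial y^K_j=\mathscr{L}\,G^{jk}g_{KL}y^L_k$ where $G^{jk}$ is the inverse matrix. Differentiating repeatedly and then contracting the free jet-indices $j_1,\ldots,j_n$ against $\varepsilon_{j_1\ldots j_n}$ collapses many terms: the totally antisymmetrized product of $n$ factors $G^{j_a k_a}g_{K_a L_a}y^{L_a}_{k_a}$ reconstructs a cofactor/determinant, producing exactly the combination $(1/\mathscr{L})g_{K_1L_1}\cdots g_{K_nL_n}D^{L_1\ldots L_n}$ up to the combinatorial factor $(n!)^2$ present in \eqref{eq:FundLepEqZermelo}. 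I would organize this via the identity $\varepsilon_{j_1\ldots j_n}G^{j_1k_1}\cdots G^{j_nk_n}=(\det G^{-1})\varepsilon^{k_1\ldots k_n}=(1/\mathscr{L}^2)\varepsilon^{k_1\ldots k_n}$, together with $\varepsilon^{k_1\ldots k_n}y^{L_1}_{k_1}\cdots y^{L_n}_{k_n}=D^{L_1\ldots L_n}$; the lower-derivative ``correction'' terms that appear when the $\partial/\partial y^K_j$ hit the inverse matrix $G^{jk}$ or earlier $\mathscr{L}$-factors must be shown to drop out under the full antisymmetrization in $j_1,\ldots,j_n$, which they do by the Zermelo-type skew-symmetry already exploited in Lemma \ref{Lemma:vlastnosti-homog}. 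This matching of coefficients gives $W_\lambda=\omega_\lambda$.

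Next, $\Lambda_\lambda$ versus $W_\lambda$: here I would start from \eqref{eq:Hilbert-CaratheodoryForm} and substitute $\partial\mathscr{L}/\partial y^{K_a}_{j_a}=\mathscr{L}\,G^{j_a k_a}g_{K_aL_a}y^{L_a}_{k_a}$, so that the product of $n$ first derivatives yields $\mathscr{L}^n\,\varepsilon_{j_1\ldots j_n}G^{j_1k_1}\cdots G^{j_nk_n}g_{K_1L_1}\cdots g_{K_nL_n}y^{L_1}_{k_1}\cdots y^{L_n}_{k_n}$, and the prefactor $1/\mathscr{L}^{n-1}$ leaves exactly one power of $\mathscr{L}$ downstairs after applying $\varepsilon_{j_1\ldots j_n}G^{j_1k_1}\cdots G^{j_nk_n}=\mathscr{L}^{-2}\varepsilon^{k_1\ldots k_n}$ — reproducing precisely the coefficient of $\omega_\lambda$ in \eqref{eq:KrupkaForm}, hence of $W_\lambda$. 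Thus all three coincide. I expect the main obstacle to be the first matching: carefully bookkeeping the Leibniz expansion of $\partial^n\mathscr{L}$ and proving that every term other than the ``leading'' one annihilates upon antisymmetrization in $j_1,\ldots,j_n$. This is where invoking \eqref{eq:DerivativesHomogeneous} (equivalently \eqref{eq:ZermeloDifferentiating}) systematically — rather than expanding by brute force — is essential; the rest is substitution into known determinant identities.
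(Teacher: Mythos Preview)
Your treatment of $\Lambda_\lambda=\omega_\lambda$ via the inverse matrix $G^{jk}$ and the identity $\varepsilon_{j_1\ldots j_n}G^{j_1k_1}\cdots G^{j_nk_n}=\mathscr{L}^{-2}\varepsilon^{k_1\ldots k_n}$ is correct and in fact shorter than the paper's argument, which reaches the same conclusion by expanding the product \eqref{eq:product-derivatives} in cofactor form and regrouping via cyclic permutations of the determinant factors \eqref{eq:Determinants}. Your route buys a one-line reduction; the paper's route stays entirely in cofactor language and never introduces $G^{jk}$.

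The step $W_\lambda=\omega_\lambda$ is where your outline has a genuine soft spot. The claim that the ``lower-derivative correction terms'' in the Leibniz expansion of $\partial^n\mathscr{L}$ simply drop out under antisymmetrization in $j_1,\ldots,j_n$ ``by the Zermelo-type skew-symmetry already exploited in Lemma \ref{Lemma:vlastnosti-homog}'' is not right. That lemma relates \emph{contracted} higher derivatives of an arbitrary positive homogeneous function to lower-order ones; it knows nothing about the specific Riemannian structure of this $\mathscr{L}$ and cannot by itself kill the terms coming from $\partial G^{jk}/\partial y^{K}_{l}$ or from differentiating the prefactor $\mathscr{L}$. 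What actually happens (already visible at $n=2$ if you carry out your own recipe) is that some correction terms vanish because a symmetric factor $G^{j_aj_b}$ meets the antisymmetric $\varepsilon_{j_1\ldots j_n}$, while others do \emph{not} vanish but instead reproduce further copies of the leading term, so that the full sum is a combinatorial multiple of $(1/\mathscr{L})\,g_{K_1L_1}\cdots g_{K_nL_n}D^{L_1\ldots L_n}$. You have to track that combinatorics explicitly; homogeneity alone does not do it. The paper sidesteps this by working directly with the cofactor expression \eqref{eq:derivace} together with the row-expansion formula for $\partial D^{L_1\ldots L_n}/\partial y^M_p$, which keeps the repeated differentiation organized as determinant manipulations rather than inverse-matrix differentiation.
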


\begin{proof}
1. We show that $\Lambda_{\lambda}=\omega_{\lambda}$. Write
\begin{align}
\mathscr{L}^{2}=\det(g_{KL}y_{j}^{K}y_{k}^{L}) & =g_{Q_{1}L_{1}}g_{Q_{2}L_{2}}\ldots g_{Q_{n}L_{n}}y_{1}^{Q_{1}}y_{2}^{Q_{2}}\ldots y_{n}^{Q_{n}}D^{L_{1}L_{2}\ldots L_{n}},\label{eq:Lsquare}
\end{align}
and for any $p$, $1\leq p\leq n$, we get
\begin{align}
\frac{\partial\mathscr{L}}{\partial y_{p}^{K}} & =\frac{1}{\mathscr{L}}g_{Q_{1}L_{1}}\ldots g_{Q_{p-1}L_{p-1}}g_{KL_{p}}g_{Q_{p+1}L_{p+1}}\ldots g_{Q_{n}L_{n}}\label{eq:derivace}\\
 & \qquad\cdot y_{1}^{Q_{1}}\ldots y_{p-1}^{Q_{p-1}}y_{p+1}^{Q_{p+1}}\ldots y_{n}^{Q_{n}}D^{L_{1}L_{2}\ldots L_{n}}.\nonumber 
\end{align}
A straightforward application of \eqref{eq:derivace} now implies
the expression
\begin{align}
 & \frac{\partial\mathscr{L}}{\partial y_{1}^{K_{1}}}\frac{\partial\mathscr{L}}{\partial y_{2}^{K_{2}}}\ldots\frac{\partial\mathscr{L}}{\partial y_{n}^{K_{n}}}\nonumber \\
 & \quad\quad=\frac{1}{\mathscr{L}^{n}}g_{K_{1}L_{1}^{1}}g_{K_{2}L_{2}^{2}}\ldots g_{K_{n}L_{n}^{n}}D^{L_{1}^{1}L_{2}^{1}\ldots L_{n}^{1}}D^{L_{1}^{2}L_{2}^{2}\ldots L_{n}^{2}}\ldots D^{L_{1}^{n}L_{2}^{n}\ldots L_{n}^{n}}\label{eq:product-derivatives}\\
 & \quad\quad\cdot g_{Q_{2}^{1}L_{2}^{1}}\ldots g_{Q_{n}^{1}L_{n}^{1}}g_{Q_{1}^{2}L_{1}^{2}}g_{Q_{3}^{2}L_{3}^{2}}\ldots g_{Q_{n}^{2}L_{n}^{2}}\ldots\ldots g_{Q_{1}^{n}L_{1}^{n}}g_{Q_{2}^{n}L_{2}^{n}}\ldots g_{Q_{n-1}^{n}L_{n-1}^{n}}\nonumber \\
 & \quad\quad\cdot y_{2}^{Q_{2}^{1}}\ldots y_{n}^{Q_{n}^{1}}y_{1}^{Q_{1}^{2}}y_{3}^{Q_{3}^{2}}\ldots y_{n}^{Q_{n}^{2}}\ldots\ldots y_{1}^{Q_{1}^{n}}y_{2}^{Q_{2}^{n}}\ldots y_{n-1}^{Q_{n-1}^{n}}.\nonumber 
\end{align}
However, from the symmetry and skew-symmetry properties of the expression
\eqref{eq:product-derivatives}, we are allowed to replace the term
$D^{L_{1}^{1}L_{2}^{1}\ldots L_{n}^{1}}D^{L_{1}^{2}L_{2}^{2}\ldots L_{n}^{2}}\ldots D^{L_{1}^{n}L_{2}^{n}\ldots L_{n}^{n}}$
by the product of determinants
\begin{equation}
\frac{1}{n!}\prod_{\tau}D^{L_{1}^{\tau(1)}L_{2}^{\tau(2)}\ldots L_{n}^{\tau(n)}},\label{eq:Determinants}
\end{equation}
where $\tau$ runs through the cyclic permutations $(1,2,\ldots,n)$,
$(n,1,2,\ldots,n-1)$, $\ldots$, $(2,3,\ldots,n,1)$. Using \eqref{eq:Lsquare},
\eqref{eq:Determinants}, we now get
\begin{align*}
 & \Lambda_{\lambda}=\frac{1}{\mathscr{L}^{n-1}}\frac{\partial\mathscr{L}}{\partial y_{1}^{K_{1}}}\frac{\partial\mathscr{L}}{\partial y_{2}^{K_{2}}}\ldots\frac{\partial\mathscr{L}}{\partial y_{n}^{K_{n}}}dy^{K_{1}}\land dy^{K_{2}}\wedge\ldots\wedge dy^{K_{n}}\\
 & \quad\quad=\frac{1}{n}\frac{1}{\mathscr{L}^{2n-1}}g_{K_{1}L_{1}^{1}}g_{K_{2}L_{2}^{2}}\ldots g_{K_{n}L_{n}^{n}}D^{L_{1}^{1}L_{2}^{2}\ldots L_{n}^{n}}\\
 & \quad\quad\cdot\left(g_{Q_{1}^{n}L_{1}^{n}}g_{Q_{2}^{1}L_{2}^{1}}g_{Q_{3}^{2}L_{3}^{2}}\ldots g_{Q_{n}^{n-1}L_{n}^{n-1}}y_{1}^{Q_{1}^{n}}y_{2}^{Q_{2}^{1}}\ldots y_{n}^{Q_{n}^{n-1}}D^{L_{1}^{n}L_{2}^{1}L_{3}^{2}\ldots L_{n}^{n-1}}\right)\\
 & \quad\quad\ldots\cdot\left(g_{Q_{1}^{2}L_{1}^{2}}g_{Q_{2}^{3}L_{2}^{3}}\ldots g_{Q_{n-1}^{n}L_{n-1}^{n}}g_{Q_{n}^{1}L_{n}^{1}}y_{1}^{Q_{1}^{n}}y_{2}^{Q_{2}^{1}}\ldots y_{n}^{Q_{n}^{n-1}}D^{L_{1}^{2}L_{2}^{3}\ldots L_{n-1}^{n}L_{n}^{1}}\right)\\
 & \quad\quad\quad dy^{K_{1}}\land dy^{K_{2}}\wedge\ldots\wedge dy^{K_{n}}\\
 & \quad\quad=\frac{1}{n!}\frac{(\mathscr{L}^{2})^{n-1}}{\mathscr{L}^{2n-1}}g_{K_{1}L_{1}^{1}}g_{K_{2}L_{2}^{2}}\ldots g_{K_{n}L_{n}^{n}}D^{L_{1}^{1}L_{2}^{2}\ldots L_{n}^{n}}dy^{K_{1}}\land dy^{K_{2}}\wedge\ldots\wedge dy^{K_{n}}\\
 & \quad\quad=\omega_{\lambda}.
\end{align*}

2. To show that $W_{\lambda}=\omega_{\lambda}$, we proceed by a straightforward
differentiating of $\mathscr{L}$ with respect to the variables $y_{j}^{K_{j}}$,
$1\leq j\leq n$, with the help of \eqref{eq:derivace}, \eqref{eq:Determinants},
and the formula
\begin{align*}
 & \frac{\partial D^{L_{1}L_{2}\ldots L_{n}}}{\partial y_{p}^{M}}=\delta_{1}^{p}\left|\begin{array}{ccc}
\delta_{M}^{L_{1}} & \delta_{M}^{L_{2}} & \ldots\delta_{M}^{L_{n}}\\
y_{2}^{L_{1}} & y_{2}^{L_{2}} & \ldots y_{2}^{L_{n}}\\
\vdots & \vdots & \vdots\\
y_{n}^{L_{1}} & y_{n}^{L_{2}} & \ldots y_{n}^{L_{n}}
\end{array}\right|+\ldots+\delta_{n}^{p}\left|\begin{array}{ccc}
y_{1}^{L_{1}} & y_{1}^{L_{2}} & \ldots y_{1}^{L_{n}}\\
y_{2}^{L_{1}} & y_{2}^{L_{2}} & \ldots y_{2}^{L_{n}}\\
\vdots & \vdots & \vdots\\
\delta_{M}^{L_{1}} & \delta_{M}^{L_{2}} & \ldots\delta_{M}^{L_{n}}
\end{array}\right|.
\end{align*}
Hence we obtain

\begin{align*}
\frac{\partial^{n}\mathscr{L}}{\partial y_{1}^{K_{1}}\partial y_{2}^{K_{2}}\ldots\partial y_{n}^{K_{n}}} & =\frac{1}{\mathscr{L}}g_{K_{1}L_{1}}g_{K_{2}L_{2}\ldots}g_{K_{n}L_{n}}D^{L_{1}L_{2}\ldots L_{n}},
\end{align*}
as required.
\end{proof}
\begin{rem}
A relationship between the Hilbert-Carath\'eodory form $\Lambda_{\lambda}$
and the fundamental Lepage equivalent $W_{\lambda}$ of the corresponding
positive-homogeneous Lagrangian $\lambda$ was also studied by Crampin
and Saunders \cite{CramSaun3} for \textit{trivial} Lagrangians; it
is proved that a Lagrange function of the form $\mathscr{L}=\det(d_{j}f^{k})$
, where $f^{k}$, $1\leq k\leq n$, are some functions on $Q$, has
the property that the forms $\Lambda_{\lambda}$ and $W_{\lambda}$
coincide. Theorem \ref{Thm:MinSubLagran} shows, however, a \textit{different}
example of a positive-homogeneous Lagrangian with this property, the
minimal submanifolds Lagrangian.
\end{rem}

By Theorem \ref{Thm:Euler-Lagrange}, \eqref{eq:EL-equations}, the
equations for extremals, associated with the Lagrange function $\mathscr{L}$
\eqref{eq:MinSubLagrange}, read 
\begin{equation}
E_{K}(\mathscr{L})\circ T_{n}^{2}\zeta=0,\quad1\leq K\leq m+n,\label{eq:MinSubmanifoldEquation}
\end{equation}
where
\begin{align*}
E_{K}(\mathscr{L}) & =\frac{n}{2}\frac{1}{\mathscr{L}}\frac{\partial g_{Q_{1}L_{1}}}{\partial y^{K}}g_{Q_{2}L_{2}}\ldots g_{Q_{n}L_{n}}y_{1}^{Q_{1}}y_{2}^{Q_{2}}\ldots y_{n}^{Q_{n}}D^{L_{1}L_{2}\ldots L_{n}}\\
 & -\sum_{j=1}^{n}d_{j}\Bigl(\frac{1}{\mathscr{L}}g_{Q_{1}L_{1}}\ldots g_{Q_{j-1}L_{j-1}}g_{KL_{j}}g_{Q_{j+1}L_{j+1}}\ldots g_{Q_{n}L_{n}}\\
 & \quad\quad\quad\quad\cdot y_{1}^{Q_{1}}\ldots y_{j-1}^{Q_{j-1}}y_{j+1}^{Q_{j+1}}\ldots y_{n}^{Q_{n}}D^{L_{1}L_{2}\ldots L_{n}}\Bigr).
\end{align*}
\eqref{eq:MinSubmanifoldEquation} is the \textit{minimal submanifolds
equation}.

We conclude this section with an application of the Noether's theorem
(Theorem \ref{Thm:Noether}) to the problem of $2$-dimensional immersed
minimal submanifolds of a Euclidean space.

Consider the Euclidean space $Q=\mathbf{R}^{m+2}$ with its canonical
smooth manifold structure. Let $\Xi$ be a vector field on $\mathbf{R}^{m+2}$,
and $G_{2}^{1}\Xi$ be its first-order Grassmann prolongation, expressed
in the $(i)=(i_{1},i_{2})$-subordinate chart $(\tilde{V}_{n}^{1(i)},\tilde{\chi}_{n}^{1(i)})$,
$\tilde{\chi}_{n}^{1(i)}=(w^{i_{1}},w^{i_{2}},w^{\mu},w_{i_{1}}^{\mu},w_{i_{2}}^{\mu})$,
on $G_{2}^{1}\mathbf{R}^{m+2}$ by
\begin{align*}
\Xi & =\Xi^{i_{1}}\frac{\partial}{\partial w^{i_{1}}}+\Xi^{i_{2}}\frac{\partial}{\partial w^{i_{2}}}+\Xi^{\mu}\frac{\partial}{\partial w^{\mu}},\\
G_{2}^{1}\Xi & =\Xi^{i_{1}}\frac{\partial}{\partial w^{i_{1}}}+\Xi^{i_{2}}\frac{\partial}{\partial w^{i_{2}}}+\Xi^{\mu}\frac{\partial}{\partial w^{\mu}}+\Xi_{i_{1}}^{\mu}\frac{\partial}{\partial w_{i_{1}}^{\mu}}+\Xi_{i_{2}}^{\mu}\frac{\partial}{\partial w_{i_{2}}^{\mu}},
\end{align*}
where $\Xi_{p}^{\mu}$, $p=i_{1},i_{2}$, has the expression (cf.
Lemma \eqref{Lem:VectorFieldProlong})
\begin{align*}
 & \Xi_{p}^{\mu}=\frac{\partial\Xi^{\mu}}{\partial w^{p}}+w_{p}^{\nu}\frac{\partial\Xi^{\mu}}{\partial w^{\nu}}-w_{i_{1}}^{\mu}\frac{\partial\Xi^{i_{1}}}{\partial w^{p}}-w_{i_{2}}^{\mu}\frac{\partial\Xi^{i_{2}}}{\partial w^{p}}-w_{i_{1}}^{\mu}w_{p}^{\nu}\frac{\partial\Xi^{i_{1}}}{\partial w^{\nu}}-w_{i_{2}}^{\mu}w_{p}^{\nu}\frac{\partial\Xi^{i_{2}}}{\partial w^{\nu}}
\end{align*}
(no summation through $i_{1}$, $i_{2}$). A straightforward computation
of the Lie derivative of the Lepage equivalent $W_{\lambda}=\omega_{\lambda}$
with respect to $G_{2}^{1}\Xi$ leads to the \textit{Noether's equation}
$\partial_{G_{2}^{1}\Xi}\omega_{\lambda}=0$ $\mathrm{mod}\,\tilde{\Theta}^{1}$
for unknowns $\Xi$, which has the form
\begin{equation}
\partial_{G_{2}^{1}\Xi}\tilde{\mathscr{L}_{G}}+\tilde{\mathscr{L}_{G}}\left(\frac{\partial\Xi^{i_{1}}}{\partial w^{i_{1}}}+\frac{\partial\Xi^{i_{1}}}{\partial w^{\nu}}w_{i_{1}}^{\nu}+\frac{\partial\Xi^{i_{2}}}{\partial w^{i_{2}}}+\frac{\partial\Xi^{i_{2}}}{\partial w^{\nu}}w_{i_{2}}^{\nu}\right)=0,\label{eq:NoetherEqMinSub}
\end{equation}
where $\partial_{G_{2}^{1}\Xi}\tilde{\mathscr{L}_{G}}$ denotes the
Lie derivative of function $\tilde{\mathscr{L}_{G}}:G_{2}^{1}\mathbf{R}^{m+2}\rightarrow\mathbf{R}$
with respect to $G_{2}^{1}\Xi$, and $\tilde{\mathscr{L}_{G}}$ is
uniquely defined Grassmann projection of the Lagrange function $\mathscr{L}$
\eqref{eq:MinSubLagrange}, $\mathscr{L}=(w_{1}^{i_{1}}w_{2}^{i_{2}}-w_{1}^{i_{2}}w_{2}^{i_{1}})\cdot\tilde{\mathscr{L}_{G}}$
(see \eqref{eq:G-function}). 

Moreover, if $Q=\mathbf{R}^{m+2}$ is considered with the \textit{\textcolor{black}{Euclidean
metric}} $g=(\delta_{KL})$, it is easy to show that a vector field
$\xi$ with \textit{\textcolor{black}{constant}} \textit{\textcolor{black}{coefficients}}
is a general solution of equation \eqref{eq:NoetherEqMinSub}. Hence,
we obtain $(m+2)$-dimensional Lie algebra of generators of invariance
transformations of the Lepage equivalent $\omega_{\lambda}$, generated
by the vector fields $\xi_{1}=\partial/\partial w^{i_{1}}$, $\xi_{2}=\partial/\partial w^{i_{2}}$,
$\xi_{\mu}=\partial/\partial w^{\mu}$. Contracting the Lepage form
$\omega_{\lambda}$ by the generators of invariance transformations,
we obtain the \textit{\textcolor{black}{Noether currents}} (first
integrals), locally expressed by
\begin{align}
i_{\xi_{1}}\omega_{\lambda} & =\frac{1}{\tilde{\mathscr{L}_{G}}}\left(\sum_{\mu}w_{i_{2}}^{\mu}dw^{\mu}+dw^{i_{2}}\right),\quad i_{\xi_{2}}\omega_{\lambda}=-\frac{1}{\tilde{\mathscr{L}_{G}}}\left(\sum_{\mu}w_{i_{1}}^{\mu}dw^{\mu}+dw^{i_{1}}\right),\nonumber \\
i_{\xi_{\mu}}\omega_{\lambda} & =\frac{1}{\tilde{\mathscr{L}_{G}}}\left(\sum_{\sigma}\left(w_{i_{1}}^{\mu}w_{i_{2}}^{\sigma}-w_{i_{1}}^{\sigma}w_{i_{2}}^{\mu}\right)dw^{\sigma}-w_{i_{2}}^{\mu}dw^{i_{1}}+w_{i_{1}}^{\mu}dw^{i_{2}}\right),\label{eq:NoetherCurrents}
\end{align}
where
\[
\tilde{\mathscr{L}_{G}}=\sqrt{\sum_{\sigma_{1}<\sigma_{2}}\left(w_{i_{1}}^{\sigma_{1}}w_{i_{2}}^{\sigma_{2}}-w_{i_{1}}^{\sigma_{2}}w_{i_{2}}^{\sigma_{1}}\right)^{2}+\sum_{\sigma}\left((w_{i_{1}}^{\sigma})^{2}+(w_{i_{2}}^{\sigma})^{2}\right)+1}.
\]
By Theorem \ref{Thm:Noether}, linear forms \eqref{eq:NoetherCurrents}
are \textit{\textcolor{black}{closed}} hence also \textit{\textcolor{black}{exact}}
\textit{\textcolor{black}{along}} every extremal $\zeta:\mathbf{R}^{2}\supset U\rightarrow\mathbf{R}^{m+2}$
of the variational functional \eqref{eq:VarFuncMinSub}. This result
extends the concept of a Noether current as a level-set function for
extremals, known in the classical mechanics (cf. Urban and Krupka
\cite{UK-IJGMMP,UK-AMAPN}).

Finally, we consider 2-dimensional nonparametric minimal surfaces
of $\mathbf{R}^{3}$. Write $y^{1}=x$, $y^{2}=y$, $y^{3}=z$, the
canonical global coordinates of $\mathbf{R}^{3}$, and $y_{j}^{1}=x_{j}$,
$y_{j}^{2}=y_{j}$, $y_{j}^{3}=z_{j}$, $j=1,2$, the associated coordinates
of $T_{2}^{1}\mathbf{R}^{3}$. Consider a subordinate chart $(\tilde{V}_{2}^{1,(i)},\tilde{\chi}_{2}^{1,(i)})$
on $G_{2}^{1}\mathbf{R}^{3}=\mathrm{Imm}\,T_{2}^{1}\mathbf{R}^{3}/GL_{2}(\mathbf{R})$,
where, for instance, $(i_{1},i_{2})=(1,2)$, and $\tilde{\chi}_{2}^{1,(i)}=(w^{1},w^{2},w^{3},w_{1}^{3},w_{2}^{3})$,
where $w^{1}=x$, $w^{2}=y$, $w^{3}=z$, and 
\[
w_{1}^{3}=\frac{y_{2}z_{1}-y_{1}z_{2}}{x_{1}y_{2}-x_{2}y_{1}},\quad w_{2}^{3}=\frac{x_{1}z_{2}-x_{2}z_{1}}{x_{1}y_{2}-x_{2}y_{1}}
\]
(cf. Section \ref{sec:3}, \eqref{eq:AdaptCoord}). It is now easy
to verify that for $Q=\mathbf{R}^{3}$ with the \textcolor{black}{Euclidean
metric} $\delta_{KL}$, the Euler\textendash Lagrange equations \eqref{eq:MinSubmanifoldEquation}
for graph of a surface $\zeta:\mathbf{R}^{2}\supset U\rightarrow\mathbf{R}^{3}$,
$\zeta(x,y)=(x,y,u(x,y))$, are equivalent to the well-known single
second-order differential equation for an unknown function $u:\mathbf{R}^{2}\supset U\rightarrow\mathbf{R}$,
\begin{equation}
(1+u_{y}^{2})u_{xx}-2u_{x}u_{y}u_{xy}+(1+u_{x}^{2})u_{yy}=0,\label{eq:LagrangeMinSub}
\end{equation}
cf. Dierkes, Hildebrandt, and Sauvigny \cite{Dierkes}. 

Contracting the Lepage form $\omega_{\lambda}$ by the generators
of invariance transformations $\partial/\partial x$, $\partial/\partial y$,
$\partial/\partial z$, we get the Noether currents \eqref{eq:NoetherCurrents}
hence the \textit{conservation law equations} of the form
\begin{align}
\frac{(x_{1}y_{2}-x_{2}y_{1})dy-(z_{1}x_{2}-z_{2}x_{1})dz}{\sqrt{\left(y_{1}z_{2}-y_{2}z_{1}\right)^{2}+\left(z_{1}x_{2}-z_{2}x_{1}\right)^{2}+\left(x_{1}y_{2}-x_{2}y_{1}\right)^{2}}} & =df(x,y),\nonumber \\
\frac{(y_{1}z_{2}-y_{2}z_{1})dz-(x_{1}y_{2}-x_{2}y_{1})dx}{\sqrt{\left(y_{1}z_{2}-y_{2}z_{1}\right)^{2}+\left(z_{1}x_{2}-z_{2}x_{1}\right)^{2}+\left(x_{1}y_{2}-x_{2}y_{1}\right)^{2}}} & =dg(x,y),\label{eq:ConservationLawEquation}\\
\frac{(z_{1}x_{2}-z_{2}x_{1})dx-(y_{1}z_{2}-y_{2}z_{1})dy}{\sqrt{\left(y_{1}z_{2}-y_{2}z_{1}\right)^{2}+\left(z_{1}x_{2}-z_{2}x_{1}\right)^{2}+\left(x_{1}y_{2}-x_{2}y_{1}\right)^{2}}} & =dh(x,y),\nonumber 
\end{align}
where $f$, $g$, $h$, are arbitrary functions on $U\subset\mathbf{R}^{2}$.
The Noether theorem \ref{Thm:Noether} says that every minimal surface
$\zeta$ of $\mathbf{R}^{3}$ is a solution of \eqref{eq:ConservationLawEquation}.
Conversely, we claim that every solution $\zeta:\mathbf{R}^{2}\supset U\rightarrow\mathbf{R}^{3}$,
$\zeta(x,y)=(x,y,u(x,y))$, of the conservation law equations \eqref{eq:ConservationLawEquation}
is a minimal surface of $\mathbf{R}^{3}$ hence an extremal of the
variational functional \eqref{eq:VarFuncMinSub}. Indeed, using the
coordinate expressions of $G_{2}^{1}\zeta:\mathbf{R}^{2}\supset U\rightarrow G_{2}^{1}\mathbf{R}^{3}$
\eqref{eq:GrassmannProlong}, equations \eqref{eq:ConservationLawEquation}
form a Pfaffian system
\begin{align}
 & \frac{u_{x}u_{y}dx+\left(1+\left(u_{y}\right)^{2}\right)dy}{\sqrt{\left(u_{x}\right)^{2}+\left(u_{y}\right)^{2}+1}}=df,\quad\frac{-\left(1+\left(u_{x}\right)^{2}\right)dx-u_{x}u_{y}dy}{\sqrt{\left(u_{x}\right)^{2}+\left(u_{y}\right)^{2}+1}}=dg,\label{eq:Vlastnosti-f-g}
\end{align}
and
\[
\frac{-u_{y}dx+u_{x}dy}{\sqrt{\left(u_{x}\right)^{2}+\left(u_{y}\right)^{2}+1}}=dh.
\]
These conditions imply that $u_{x}df(x,y)+u_{y}dg(x,y)=dh(x,y)$ or,
equivalently,
\begin{equation}
u_{x}\frac{\partial f}{\partial x}+u_{y}\frac{\partial g}{\partial x}-\frac{\partial h}{\partial x}=0,\quad u_{x}\frac{\partial f}{\partial y}+u_{y}\frac{\partial g}{\partial y}-\frac{\partial h}{\partial y}=0.\label{eq:Rovnice-ufg}
\end{equation}
Differentiating the first equation of \eqref{eq:Rovnice-ufg} with
respect to $y$, the latter one with respect to $x$, and subtracting
we get
\[
u_{xx}\frac{\partial f}{\partial y}-u_{xy}\left(\frac{\partial f}{\partial x}-\frac{\partial g}{\partial y}\right)-u_{yy}\frac{\partial g}{\partial x}=0.
\]
Substituting now into this equation the partial derivatives of $f$
and $g$, determined by \eqref{eq:Vlastnosti-f-g}, we conclude that
$u=u(x,y)$ is a solution of the minimal surface equation \eqref{eq:LagrangeMinSub}.
\begin{rem}
Equivalence of the Euler\textendash Lagrange equations for extremals
on one side, and a system of conservation law equations on the other
side, is \textit{\textcolor{black}{not}} understood in general yet.
Our results for the minimal submanifolds problem extend particular
examples from geometric mechanics which illustrate this phenomena,
see Urban and Krupka \cite{UK-IJGMMP,UK-AMAPN}. 
\end{rem}

\end{document}